\renewcommand{\mathcal}{\EuScript}
\author{Ronno Das}
\author{Dan Petersen}
\title{The Mumford conjecture (after Bianchi)}
\newcommand{\Q}{\mathbf Q}
\newcommand{\Z}{\mathbf Z}
\newcommand{\R}{\mathbf R}
\newcommand{\C}{\mathbf C}
\newcommand{\fP}{\mathfrak{P}_d}
\newcommand{\fS}{\mathfrak{S}}
\newcommand{\from}{\vcentcolon}
\NewDocumentCommand{\set}{somm}{%
\IfNoValueTF{#2}{\IfBooleanTF{#1}{\{#3 \mid #4\}}{\mleft\{ #3 \mathrel{}\middle\vert\mathrel{} #4 \mright\}}}{\mathopen{#2\{}#3 \mathrel{}#2\vert\mathrel{} #4\mathclose{#2\}}}%
}
\newcommand{\Bra}{\mathsf{Bra}}
\newcommand{\rel}{\,\mathsf{rel}\,}
\newcommand{\fr}{\mathsf{fr}}
\renewcommand{\P}{\mathbb P}
\newcommand{\A}{\mathbb A}
\newcommand{\squaretop}{\sfrac14 \partial}
\newcommand{\squareopen}{\sfrac34 \partial}
\newcommand{\loc}{\mathsf{loc}}
\newcommand{\M}{{M}}
\newcommand{\Mod}{\mathrm{Mod}}
\newtheorem{thm}{Theorem}[section]
\newtheorem{cor}[thm]{Corollary}
\newtheorem{lem}[thm]{Lemma}
\declaretheorem[sibling=thm,title={Proposition},refname={Proposition,Propositions},Refname={Proposition,Propositions}]{prop}
\theoremstyle{definition}
\newtheorem{defn}[thm]{Definition}
\theoremstyle{remark}
\newtheorem{rem}[thm]{Remark}
\numberwithin{equation}{section}
\crefname{subsection}{Subsection}{Subsections}
\Crefname{subsection}{Subsection}{Subsections}
\tikzset{->-/.style={decoration={
  markings,
  mark=at position .5 with {\arrow{>}}},postaction={decorate}}}
\begin{document}

 \begin{abstract}
     We give a self-contained and streamlined rendition of Andrea Bianchi's recent proof of the Mumford conjecture using moduli spaces of branched covers. 
 \end{abstract}

 \maketitle

\section{Introduction}

Let $\Mod_g$ and $\Mod_g^1$ denote, respectively, the mapping class group of a closed oriented surface of genus $g$, and of a surface of genus $g$ with one boundary component. There are natural maps $\smash{\Mod_g^1\to \Mod_g}$ (gluing on a disk) and ${\Mod_g^1 \to \Mod_{g+1}^1}$ (gluing on a torus with two holes). The celebrated \emph{Harer stability theorem} \cite{harerstability,boldsen,randalwilliams-resolutions} says that  $\smash{H_k(\Mod_g^1;\Z)\to H_k(\Mod_{g+1}^1;\Z)}$ and $\smash{H_k(\Mod_g^1;\Z)\to H_k(\Mod_g;\Z)}$ are isomorphisms for $k \leq \frac {2g-2} 3 $ and $k \leq \frac {2g} 3$, respectively. 

Knowing Harer's theorem, the natural follow-up question is whether one can calculate the (co)homology in the stable range. Rationally, an answer was suggested by Mumford \cite{mumfordtowards}: stably, the rational cohomology ring should be a polynomial algebra, with generators $\kappa_1,\kappa_2,\kappa_3,\ldots$ in degrees $2,4,6,\dots$, the so-called Miller--Morita--Mumford classes. This became known as the \emph{Mumford conjecture.} Miller \cite{miller} had previously proven that the stable rational cohomology ring would be a free algebra (in the graded sense) with at least one generator in each even degree. 

In terms of algebraic geometry, $\Mod_g$ is the orbifold fundamental group of the moduli space $\M_g$ of smooth genus $g$ curves. One can similarly identify $\Mod_g^1$ with the fundamental group of the moduli space $\M_g^1$ of genus $g$ curves with a marked point and a nonzero tangent vector at that marking. Since both spaces are $K(\pi,1)$ as orbifolds, the Mumford conjecture can also be understood as the determination of the rational cohomology of $\M_g$ (or $\M_g^1$) for $g \gg 0$.

The Mumford conjecture was proved about 20 years later, by methods of homotopy theory, in breakthrough work of Madsen and Weiss \cite{madsenweiss}. The starting point is the following theorem of Tillmann \cite{tillmann-stable}. The disjoint union $\mathcal M := \coprod_g B\Mod_g^1$ can be given the structure of a topological monoid, by means of gluing a pair of pants (or boundary-connected sum), and one may form its \emph{group-completion} $\mathcal M_\infty = \Omega B \mathcal M$. According to the group-completion theorem, the stable (co)homology of the mapping class group coincides with the (co)homology of the base component of $\mathcal M_\infty$. It is easy to see geometrically that the monoid structure on $\mathcal M$ extends to an action on $\mathcal M$  by the operad of \emph{two-dimensional little disks}, which implies that $\mathcal M_\infty$ is a \emph{$2$-fold loop space}.\footnote{For a brief overview of the group-completion theorem, and the connection between iterated loop spaces and algebras over the little disk operad, see Section \ref{groupcompletion section} of this paper.} The surprising theorem of Tillmann is that it is in fact an \emph{infinite loop space}. Madsen and Weiss's proof of the Mumford conjecture goes by identifying precisely this infinite loop space: it is the Thom spectrum of the virtual vector bundle $-\mathcal O(-1)$ over $\C\mathbf P^\infty$. The rational cohomology of the associated infinite loop space is very easy to calculate, and the Mumford conjecture follows. 

Many alternative proofs of the Madsen--Weiss theorem have since appeared, including the more general Galatius--Madsen--Tillmann--Weiss theorem \cite{gmtw} and the Cobordism Hypothesis \cite{luriecobordism}. However, until recently, the only known proof of the Mumford conjecture was via the Madsen--Weiss theorem; that is, by determining explicitly the infinite loop space in Tillmann's theorem. This changed with recent remarkable work of Bianchi, who has given a completely different proof of the Mumford conjecture, spread across a sequence of four papers \cite{bianchi1,bianchi2,bianchi3,bianchi4}. The goal of our paper is to give a streamlined and self-contained account of Bianchi's proof. 

The starting point in Bianchi's work is not Tillmann's infinite loop space structure on $\mathcal M_\infty$, but instead the geometrically obvious $2$-fold loop space structure coming from pair-of-pants multiplication. The first goal is to explicitly identify a geometrically meaningful \emph{$2$-fold delooping} $X$ of $\mathcal M_\infty$ (a space such that $\mathcal M_\infty \simeq \Omega^2 X$), using ideas of Segal and McDuff (the \emph{scanning map}). The second goal is to show that $X$ has the rational homotopy type of a product of Eilenberg--MacLane spaces $\prod_{n=1}^\infty K(\Q,2n)$. This directly implies the Mumford conjecture, since the base component of $\mathcal M_\infty \simeq \Omega^2 X$ is then also rationally homotopy equivalent to $\prod_{n=1}^\infty K(\Q,2n)$.
 
 As a model for how to carry out the first step, consider the following easier problem. Let $D$ be an open two-dimensional disk, and $\mathrm{Conf}_n\,D$ be the configuration space of $n$ distinct unordered points in $D$. Then $\mathcal C = \coprod_n \mathrm{Conf}_n\,D$ admits the structure of an algebra over the operad of two-dimensional little disks, which gives the group-completion $\mathcal C_\infty$ the structure of a $2$-fold loop space. The corresponding $2$-fold delooping was first determined by Segal \cite{segalconfiguration}: $\mathcal C_\infty \simeq \Omega^2 S^2.$ There are by now very many ways to prove this result. A particularly geometric approach goes back to McDuff \cite{mcduff}: one model of the 2-fold delooping is given by the \emph{configuration space of arbitrarily many distinct unordered points in $D$, where points can be annihilated and created along $\partial D$}. Then it suffices to show that this latter space has the homotopy type of a $2$-sphere. To do this, one may ``expand'' configurations radially from the origin, to make all but at most one point in a configuration hit the boundary and be annihilated. This deformation retracts the space of particles-with-annihilation in $D$ to its subspace consisting of configurations of at most one particle. But this subspace is homeomorphic to the one-point compactification of $D$, with the point at infinity corresponding to the empty configuration. Thus $\mathcal C_\infty \simeq \Omega^2 S^2.$
 
 Returning now to the setting of moduli of curves, consider a compact Riemann surface $\Sigma$ of genus $g$, with a marked point $x$. The Riemann--Roch theorem implies that for $d$ large enough with respect to $g$, there exists a  meromorphic function on $\Sigma$ with a unique pole at $x$, of order $d$. A more careful argument in families shows similarly that for $d$ large enough with respect to $g$, there is an affine bundle over $\M_g^1$ which parametrizes smooth pointed genus $g$ curves $(\Sigma,x)$ equipped with a degree $d$ branched cover $\Sigma \to \mathbb P^1$ such that $x$ is a point of total ramification over $\infty$. (We also need to normalize the branched cover by asking it to take a fixed tangent vector at $x$ to a fixed tangent vector at $\infty$, via its ``principal part of order $d$''.) Taking the real blow-up of $\Sigma$ and $\mathbb P^1$ at $x$ and $\infty$, we get a genus $g$ surface with a boundary component, which is a branched cover of a closed disk. The upshot is that we may replace $B\Mod_g^1 \simeq \M_g^1$ with a homotopically equivalent \emph{topological moduli space} parametrizing \emph{branched covers of the closed disk} with prescribed degree, genus, and boundary monodromy. 
 
 The point, now, is that the moduli space of branched covers of the disk\footnote{Moduli spaces of branched covers of Riemann surfaces are classically studied in algebraic geometry under the name \emph{Hurwitz spaces}.} is not so different from the configuration space of points in the disk! Indeed, a branched cover is completely described by its branch locus (a configuration of finitely many points), together with a finite amount of ``monodromy data''. Thus we may think of the moduli-of-surfaces monoid $\mathcal M$ as being rather similar to the monoid $\mathcal C$ of configuration spaces. A key difference between configuration spaces and moduli of branched covers is that branch points can collide, and split apart, when the monodromy allows it: for example, the map $z \mapsto z^n$ has a single branch point (critical value) of index $n$, whereas a generic perturbation $z \mapsto f(z)$, $\deg f=n$, will have $n-1$ simple branch points. Nevertheless, it turns out that the geometric delooping strategy described above works in this setting, too: a $2$-fold delooping is given by the  \emph{moduli space of branched covers of $D$ with any number of branch points, where branch points can be annihilated and created along $\partial D$}.
 
 It remains to study the homotopy type of this $2$-fold delooping. Unlike the case of configuration spaces of points, it does not deformation retract to its subspace of covers with at most one branch point: the problem being that a non-simple branch point can always split apart into multiple branch points under arbitrarily small perturbations. Nevertheless the same radial expansion trick provides a deformation retract to a subspace of particularly ``simple'' covers, namely those branched covers $E\to D$ where $E$ is a disjoint union of disks. This subspace admits an explicit CW-decomposition with only even-dimensional cells, from which we immediately read off its (co)homology groups. Even better, it comes with a geometrically meaningful cellular approximation of the diagonal, which can be used to compute the cup-product in cohomology; the cohomology ring is a polynomial algebra, and hence the space is a product of Eilenberg--MacLane spaces rationally. This proves the Mumford conjecture.

 The sketch above elides many subtleties in the argument. Let us try to rectify the most egregious omissions: 
 \begin{enumerate}
 	\item The space $\M_g^1$ is only equivalent to a space of degree $d$ branched covers of the disk when $d$ is large enough with respect to $g$. Since we want to study what happens as $g\to \infty$, we also need to let the degree of the cover grow arbitrarily. It seems that the most convenient way to deal with this is by working with a complicated topological monoid parametrizing branched covers of any degree, with any boundary monodromy, and any number of connected components of any genera. In particular, its monoid of connected components is certainly not $\mathbf N$. On the other hand, when studying the delooping it will be more convenient to fix the degree $d$. 
 	\item In order to glue branched covers together, they need to be equipped with trivializations along (part of) the boundary of the disk. In the 2-fold delooping there is no such trivialization as part of the data, which forces us to treat the delooping as a \emph{topological stack}: the objects it parametrizes may have nontrivial automorphisms. For example, the trivial degree $d$ cover of the disk has automorphism group $\mathfrak S_d$. 
 	\item Since the $2$-fold delooping is a stack, the CW-decomposition we construct of it is only a decomposition into orbicells. In particular, it can only be easily used to determine the \emph{rational} (co)homology of the delooping. 
 \end{enumerate}

Although the global structure and strategy of the proof given here is the same as Bianchi's, there are many differences in implementation details. To aid the reader in translating between this paper and Bianchi's, we compare the two approaches in the final Section~\ref{sec:bianchi-comparison} of this paper.

%Apart from our usage of the theory of stacks, the proof is rather elementary. The theory of operads and ``delooping machines'' informs some of the arguments, but is not actually used. The only rational homotopy we need is on the level of Serre's mod $\mathcal C$ theory. We do use the group-completion theorem. 

 \subsection{Acknowledgements} We are grateful to Andrea Bianchi for useful comments and correspondence. More importantly, Andrea explained his proof of the Mumford conjecture to the second author in the spring of 2022 at the Mittag-Leffler institute, and around the same time he provided several helpful explanations about deloopings of Hurwitz spaces in connection with the writing of \cite{BDPW}; these delooping arguments appear with only small modifications also in \cref{sec:4} of this paper. 

 Many hours worth of conversations with Craig Westerland were crucial in defining correctly the topology on the space of branched covers in \cref{sec:2}.

David Rydh provided useful explanations about gluing of stacks. 

We thank the referee for their careful reading.

Finally, we want to say that this paper owes a lot to Allen Hatcher's exposition \cite{hatcher-madsenweiss} of the proof of the Madsen--Weiss theorem via scanning, following \cite{grw}.

 \section{Group-completion, \texorpdfstring{$E_k$}{E_k}-algebras, and delooping}\label{groupcompletion section}

The purpose of this section is to briefly review the group-completion theorem and related material, most of which goes back to the 1970's and work of Quillen, May, and Segal, among others. This is all well-known among homotopy theorists, but not yet included in textbooks, and less familiar to algebraic geometers and geometric group theorists who may also be interested in the Mumford conjecture. 

\subsection{\texorpdfstring{$E_1$}{E_1}-algebras and the group-completion functor}\label{section about e1-algebras}

Let $M$ be a \emph{topological $E_1$-algebra}, also known as an algebra over the operad of $1$-dimensional little disks. The reader who is not familiar with this notion will not lose much by assuming that $M$ is a topological monoid: every topological monoid is a topological $E_1$-algebra, and every topological $E_1$-algebra is weakly equivalent to a topological monoid. %\todo{I want to say here that this is the correct definition of ``topological monoid upto homotopy'' but don't have a good phrasing.} 
If $X$ is a based space then its loop space $\Omega X$ is a topological $E_1$-algebra. The functor $X \mapsto \Omega X$ admits a left adjoint $B$, the \emph{classifying space} functor: 
\begin{equation}\label{eq:first adjunction}\Omega : (\text{based spaces}) \leftrightarrows (\text{topological $E_1$-algebras}) : B\end{equation}
We will prefer to think of \eqref{eq:first adjunction} as an adjunction of $\infty$-categories, but the reader will not lose much by thinking of it as an adjunction between homotopy categories.

If $M$ is a topological $E_1$-algebra, then $\pi_0(M)$ is a monoid. We say that $M$ is \emph{group-like}, or \emph{group-complete}, if $\pi_0(M)$ is in fact a group. The $E_1$-algebra $M$ is group-like if and only if $M \to \Omega B M$ is a weak equivalence, if and only if $M$ is weakly equivalent to a loop space. It follows that the full subcategory of group-like $E_1$-algebras is a reflective subcategory: there is an adjunction
\begin{equation}\label{eq:second adjunction}\text{incl} : (\text{group-like topological $E_1$-algebras}) \leftrightarrows (\text{topological $E_1$-algebras}) : \Omega B\end{equation}
where the reflector $M \to \Omega BM$ --- the \emph{group-completion} functor --- is precisely given by the unit of the adjunction \eqref{eq:first adjunction}. Again we prefer to think of \eqref{eq:second adjunction} as an adjunction of $\infty$-categories.

As just stated, the unit $M \to \Omega BM$ of the adjunction \eqref{eq:first adjunction} is an equivalence if and only if $M$ is group-like. The counit $B \Omega X \to X$ is an equivalence if and only if $X$ is connected. Hence \eqref{eq:first adjunction} restricts to an equivalence of $\infty$-categories between connected based spaces and group-like topological $E_1$-algebras. In particular, if $X$ is connected, then a map $M \to \Omega X$ of topological $E_1$-algebras exhibits $\Omega X$ as the group-completion of $M$ if and only if its adjoint $BM \to X$ is a weak homotopy equivalence. A very modern reference for all of the above is \cite[Theorem 5.2.6.10]{luriehigheralgebra}.

\subsection{The group-completion theorem} The paradigm of \emph{homological stability} is by now ubiquitous in algebra, geometry and topology.  Quillen's \emph{group-completion theorem} provides a link between homological stability and the group-completion operation just described. Before stating the theorem we need some preliminary definitions. 

\begin{defn}If $S$ is any discrete monoid, then we let $ES$ denote the category whose objects are the elements of $S$, and a morphism $u \to v$ is an element $s \in S$ with $us=v$.
\end{defn}\begin{defn}If $R$ is a ring, then a multiplicative subset $S \subset R$ is said to satisfy the \emph{right Ore conditions} if the following two conditions are satisfied: 
	\begin{enumerate}[(i)]
		\item For $r \in R$ and $s \in S$ with $sr = 0$, there exists $t \in S$ with $rt=0$.
		\item For $r \in R$ and $s \in S$, there exists $r' \in R$ and $s' \in S$ with $rs' = sr'$. 
\end{enumerate}\end{defn}
There are many variations of the group-completion theorem in the literature, e.g.\ \cite{mcduffsegal,oscar-groupcompletion,nikolaus}. The following \cref{groupcompletion} is closest to the original version of Quillen, first published in \cite[Appendix~Q]{quillen-groupcompletion}.
\begin{thm}[Group-completion theorem]\label{groupcompletion}Let $M$ be a topological $E_1$-algebra, with connected components $M=\coprod_{s \in S} M_s$. Suppose that $S = \pi_0(M) \subset H_\bullet(M;\Z)$ satisfies the right Ore conditions, and that $ES$ is a filtered category. Then 
	$$ H_\bullet(\Omega_0 BM;\Z) \cong \varinjlim_{s \in ES} H_\bullet(M_s;\Z),$$
	where $\Omega_0 BM$ denotes the base component of $\Omega BM$. 
\end{thm}

\begin{rem}
While this is an extremely useful tool in computing stable homology, the assumptions in the theorem or its conclusion have no bearing on whether $H_\bullet(M_s; \Z)$ stabilizes (as $s \to \infty$, in whatever sense).
\end{rem}

%\begin{rem}Let $S$ be a monoid. That $ES$ is filtered is equivalent to the following two conditions: for any $s,t\in S$ there are $u,v \in S$ such that $su=tv$; for any $r,s,t \in S$ with $rs=rt$, there is $u \in S$ such that $su=tu$. \end{rem}

\subsection{Example: algebraic \texorpdfstring{$K$}{K}-theory}

It is easiest to illustrate how \cref{groupcompletion} relates to homological stability by an example. 	Let $A$ be a commutative ring. Let $C$ be the monoidal category of finitely generated projective $A$-modules and their isomorphisms, with monoidal structure given by direct sum. Let $M$ be the classifying space of $C$, i.e.\ the geometric realization of its nerve. Then $M$ is a topological $E_1$-algebra (as is the classifying space of any monoidal category), and 
	$$ M \cong \coprod_{[P]} B\mathrm{Aut}_A(P)$$
	with the coproduct ranging over all isomorphism classes of finitely generated projective $A$-modules. The group-completion $\Omega BM$ is in this case an \emph{infinite loop space}, denoted $\Omega^\infty K(A)$, where $K(A)$ is the \emph{algebraic $K$-theory spectrum} of $R$. This is one way of \emph{defining} algebraic $K$-theory. 
	
	 Since $C$ is \emph{symmetric} monoidal, $\pi_0(M)$ and $H_\bullet(M;\Z)$ are commutative, which implies that $E\pi_0(M)$ is filtered and the Ore conditions are satisfied. Hence by the group-completion theorem one has 
	$$ H_\bullet(\Omega_0^\infty K(A);\Z) \cong \varinjlim_{[P]} H_\bullet(\mathrm{Aut}_A(P);\Z).$$
	Since every finitely generated projective module is a direct summand of a finitely generated free module, free modules are \emph{cofinal} in the colimit on the right, and hence we may also write
	$$ H_\bullet(\Omega_0^\infty K(A);\Z) \cong \varinjlim_{n} H_\bullet(\mathrm{GL}_n(A);\Z).$$
	The upshot is that whenever the ring $A$ is such that the family of discrete groups $\mathrm{GL}_n(A)$ satisfies homological stability (which is often the case), then the stable homology of $\mathrm{GL}_n(A)$ coincides with the homology of the infinite loop space of algebraic $K$-theory of $A$. This is a useful tool for calculating the groups $K_i(A) = \pi_i(K(A))$. 

More generally, the conclusion is that if $M$ is a topological $E_1$-algebra satisfying the conditions of \cref{groupcompletion}, for which the connected components of $M$ satisfy a homological stability theorem --- or, more generally, a cofinal subset of components satisfy homological stability --- then the problem of computing the stable homology is exactly equivalent to the problem of computing the homology of the space $\Omega_0 BM$.

\subsection{\texorpdfstring{$E_k$}{E_k}-algebras}

We denote by $E_k$ the topological operad of $k$-dimensional little disks. The motivating examples of $E_k$-algebras are iterated loop space: if $X$ is a based space, then $\Omega^k X$ is a topological $E_k$-algebra. Up to group-completion, all topological $E_k$-algebras are of this form \cite[Theorem~5.2.6.10]{luriehigheralgebra}.

\begin{thm}[Recognition principle for iterated loop spaces] For all $k \in \mathbb N_{>0}$ there is a functor $B^k$ from topological $E_k$-algebras to based spaces, and a functor $B^\infty$ from topological $E_\infty$-algebras to spectra. If $M$ is a group-like topological $E_k$-algebra, then $M \simeq \Omega^k B^k X$ as $E_k$-algebras. \end{thm}

\begin{rem}
	When $k=1$, the functor $B^k$ is the usual classifying space $B$. The space $B^k M$ is said to be the \emph{$k$-fold delooping} of $M$. \end{rem}

\begin{rem}The recognition principle ``explains'' why in the example of algebraic $K$-theory, the group-completion produces an infinite loop space: the classifying space of a symmetric monoidal category is always an $E_\infty$-algebra. \end{rem}

As stated in the introduction, our main interest will be the topological $E_2$-algebra $M = \coprod_g B\mathrm{Mod}_g^1$ given by the disjoint union of classifying spaces of all mapping class groups. The $E_2$-algebra structure can (with some care) be seen geometrically by means of gluing of surfaces, or algebraically by using that the classifying space of a braided monoidal category is always an $E_2$-algebra. Our goal will be to find a geometric model for the $2$-fold delooping $B^2M$ whose homotopy type we can understand well enough to deduce the Mumford conjecture. 

The standard construction of the functor $B^k$ is by a two-sided bar construction involving the operad $E_k$. This is not the tool we will use to understand $B^2 M$. Our preferred model for $M$ (or rather an enlargement $M$ of it, involving branched covers of disks) will come with two suitably compatible $E_1$-algebra structures. The $2$-fold delooping of $M$ will then be constructed by applying twice the usual classifying space functor for topological monoids. This is philosophically an avatar of the Dunn--Lurie additivity theorem \cite[Theorem~5.1.2.2]{luriehigheralgebra}, although the additivity theorem will play no role in our arguments.

\section{Moduli of curves and moduli of branched covers}\label{sec:1}

\subsection{The space of branched covers of the line}\label{sec: space of branched covers of the line}

Let $\lambda \vdash d$ be a partition, i.e.\ $d=\sum_i \lambda_i$, and $g \in \Z_{\geq 0}$. Following standard notation in Hurwitz theory, we let $M_g(\P^1,\lambda)$ denote the algebro-geometric moduli stack parametrizing pairs $(C,f)$ where $C$ is a smooth projective curve of genus $g$, and $f : C \to \P^1$ is a morphism of degree $d$ such that the ramification profile over $\infty$ is given by the partition $\lambda$. 

Suppose that $x \in C$ is a degree $k$ ramification point of a map $h:C\to D$ of smooth curves. The principal part of order $k$ of $h$ at $x$ may be considered as a degree $k$ covering map $T_xC\setminus \{0\} \to T_{h(x)}D\setminus \{0\}$ on tangent spaces, as in \cite[\S15.14]{deligne-3pts}. For a degree $d$ cover $f:C \to \P^1$, there is thus an induced $d$-fold covering space 
$$ f_\infty : \coprod_{x \in f^{-1}(\infty)} (T_xC\setminus \{0\}) \to T_\infty\P^1\setminus\{0\}.$$
Fix once and for all the choice of a ``standard'' tangent vector $\Vec v$ at $\infty$ of $\P^1$. Denote by $M_g^\fr(\P^1,\lambda)$ the finite cover of $M_g(\P^1,\lambda)$ which parametrizes the additional choice of a total ordering of the $d$ preimages $f^{-1}_\infty(\Vec v)$. %Some care should be taken in interpreting this statement: although the derivative of $f$ vanishes at a ramification point, it is still true that if $x \in C$ is a degree $k$ ramification point of a map $q:C\to D$ of smooth curves, then $q$ induces a degree $k$ covering map $(T_xC)\setminus \{0\} \to (T_{q(x)}D)\setminus \{0\}$, as in \cite[\S15.14]{deligne-3pts}. In any case, 
The monodromy of a small loop around $\infty$ permutes the set $f_\infty^{-1}(\Vec v)$, which means that $M_g^\fr(\P^1,\lambda)$ breaks up into connected components indexed by permutations of cycle-type $\lambda$:
$$ M_g^\fr(\P^1,\lambda) = \coprod_{\substack{\pi \in \mathfrak S_d \\ [\pi]=\lambda}} M_g^\fr(\P^1,\lambda)_\pi.$$
In the above formula, $[\pi]=\lambda$ means that $\lambda$ is the conjugacy class of $\pi$, where we identify conjugacy classes in $\mathfrak S_d$ with partitions of $d$. 
Then $M_g(\P^1,\lambda) = [M_g^\fr(\P^1,\lambda)/\mathfrak S_d]$, where $[-/\mathfrak S_d]$ denotes the stack quotient by the action of $\mathfrak S_d$, with the action of $\mathfrak S_d$ permuting the connected components via conjugation. The space $ M_g^\fr(\P^1,\lambda)$ is a scheme, not just a Deligne--Mumford stack. To see that each $ M_g^\fr(\P^1,\lambda)_\pi$ is connected, one may consider the dense open subset parametrizing covers with only simple branching outside $\infty$. This subset is a finite-sheeted covering space of a configuration space of distinct unordered points on $\mathbb A^1$; it follows from \cite[Theorem~1]{kluitmann} that the monodromy group is transitive, so that the covering space is connected. The case where there is no ramification over $\infty$, i.e. $\lambda = (1^d),$ is the classical Clebsch--L\"uroth--Hurwitz theorem.

Denote by $\alpha$ the permutation $(12\ldots d)$.  Let $\mathrm{Pol}_d $ be the space of monic degree $d$ polynomials; it is isomorphic as a scheme to $\A^d$. We write $\mathbb G_m$ and $\mathbb G_a$ for the multiplicative and additive group; over $\C$ these are $(\C^\times,\cdot)$ and $(\C,+)$, respectively. Then $\mathbb G_a$ acts on $\mathrm{Pol}_d$ by precomposition; as does the group $\mu_d \ltimes \mathbb G_a$, considered as a subgroup of the group of affine transformations $\mathbb G_m \ltimes \mathbb G_a$, where $\mu_d$ denotes the group of $d$th roots of unity. 

\begin{prop}\label{depressed-polynomials}
    $M_0^\fr(\P^1,(d))_\alpha \cong \mathrm{Pol}_d/\mathbb G_a \cong \mathbb A^{d-1}$, where $(d)$ is the partition of $d$ with a single block. Similarly, $M_0(\P^1,(d)) \cong [\mathrm{Pol}_d/(\mu_d \ltimes \mathbb G_a)] \cong [\mathbb A^{d-1}/\mu_d]$.
\end{prop}

\begin{proof}
    A branched cover $\P^1 \to \P^1$ with total ramification over $\infty$ is a polynomial. Asking it to preserve the tangent vector $\Vec v$ at $\infty$ amounts to asking the polynomial to be monic. Since we have no preferred coordinate system on the covering curve, we must take the quotient by the subgroup of $\mathrm{Aut}(\P^1)$ fixing $\infty$ and $\Vec v$, which is $\mathbb G_a$. 
    
    The second assertion follows by taking the stack quotient by the subgroup of $\mathfrak S_d$ stabilizing the connected component $M_0^\fr(\P^1,(d))_\alpha$; this group is just the cyclic subgroup of $\mathfrak S_d$ generated by $\alpha$. 
\end{proof}

\subsection{Points over the complex numbers}\label{subsec:points over C}

Recall that if $Y$ is a smooth curve, then a branched cover $f:X \to Y$ is uniquely determined by its branch locus $\Delta \subset Y$ and the covering space $f^{-1}(Y \setminus \Delta) \to (Y \setminus \Delta)$. (Moreover, a finite-sheeted topological covering space of a Zariski open subset of $Y$ always extends uniquely to an \emph{algebraic} branched cover, by Riemann's existence theorem.) Thus a geometric point of $M_g^\fr(\P^1,\lambda)_\pi$ can be uniquely described by specifying a finite subset $\Delta$ of $\A^1$, a degree $d$ covering space of $\A^1 \setminus \Delta$, and a trivialization of the cover along $\Vec v$. Equivalently, this amounts to specifying a finite subset $\Delta \subset \A^1$, and a homomorphism
$$ \rho : \pi_1(\A^1 \setminus \Delta, \Vec{v}) \to \mathfrak S_d,$$
where $\Vec{v}$ is considered as a tangential basepoint \cite{deligne-3pts}. Let us now characterize which $\rho$ will correspond to actual points of $M_g^\fr(\P^1,\lambda)_\pi$.

\begin{defn}
    For a permutation $\pi \in \mathfrak S_d$, let 
\[ N(\pi) := d - \# \{\text{cycles in }\pi\} = \min \{ k : \pi \text{ is a product of } k \text{ transpositions}\}.\]
\end{defn}
\begin{rem}
    Note that $N(-)$ is a class function. In the combinatorics literature it is called the \emph{absolute length} of $\pi$. 
\end{rem}

If we choose a path from $\Vec v$ to $z \in \Delta$, then via the homomorphism $\rho : \pi_1(\A^1\setminus\Delta,\Vec v)\to\mathfrak S_d$ we obtain a well-defined monodromy around $z$, which is an element of $\mathfrak S_d$. In general the monodromy depends nontrivially on the choice of path, but its conjugacy class in $\mathfrak S_d$ is well-defined. In particular, we may write $N_\rho(z)$ for the value of $N(-)$ on the monodromy around $z$. We may also consider the monodromy of $\rho$ around $\infty$, which is the monodromy along the boundary of a disk containing $\Delta$; it is well-defined and independent of choices. 

Geometric points of $M_g^\fr(\P^1,\lambda)_\pi$ are then in bijection with pairs $(\Delta,\rho)$, where $\Delta \subset \A^1$ is a finite set, and $\rho : \pi_1(\A^1 \setminus \Delta, \Vec{v}) \to \mathfrak S_d$ satisfies:
\begin{enumerate}[(i)]
    \item The monodromy of $\rho$ around $\infty$ equals  $\pi$.
    \item $N_\rho(z)>0$ for all $z \in \Delta$.
    \item $2g-2 = -2d + \sum_{z \in \Delta \cup \{\infty\}} N_\rho(z)$.
    \item The image of $\rho$ is a transitive subgroup of $\mathfrak S_d$.
\end{enumerate}
Condition (iv) ensures that the covering curve is connected and hence condition (iii) ensures that it has genus $g$.

Finally, let us describe the analytic topology on the complex points of $M_g^\fr(\P^1,\lambda)_\pi$ in these terms. A basis for the topology is indexed by pairs $(\mathfrak U, \sigma)$, where $\mathfrak U \subset \C$ is an open subset with compact closure, and $\sigma : \pi_1(\C \setminus \mathfrak U,\Vec v) \to \mathfrak S_d$ is a homomorphism. The open set corresponding to $(\mathfrak U,\sigma)$ is
\[ B(\mathfrak U,\sigma) := \{(\Delta,\rho) : \Delta \subset \mathfrak U \text{ and } \sigma = \rho \circ i\}.\]
Here $i : \pi_1(\C \setminus \mathfrak U,\Vec v) \to \pi_1(\C \setminus \Delta,\Vec v)$
is the natural homomorphism. Informally, the space is topologized in such a way that moving around in the moduli space corresponds to perturbing the branch locus; moreover, branch points are allowed to collide whenever the monodromy allows it. 

%It will be convenient to also consider disconnected covers. We define $M(\P^1,\mu)$ and $M^\fr(\P^1,\mu)_\pi$ similarly, except we allow the covering curve to have any finite number of connected components of any genera. We also set $M(\P^1,d) := \coprod_{\vert\mu\vert=d} M(\P^1,\mu)$, and $M^\fr(\P^1,d) := \coprod_{\pi\in\mathfrak S_d} M^\fr(\P^1,\pi)$. 

\subsection{A consequence of the Riemann--Roch theorem}
Let $M_{g,1}$ be the moduli stack parametrizing pairs $(C,x)$ of smooth projective curves of genus $g$ with a marked point $x$, and $M_g^1$ the space parametrizing the additional datum of a nonzero tangent vector at the distinguished marked point. The space $M_g^1$ is a scheme, with the sole exception of $M_0^1 \cong B\mathbb G_a$. One has $M_{g,1} = [M_g^1/\mathbb G_m]$.

For all $d \geq 0$, let $V_d$ be the bundle over $M_{g,1}$ whose fiber over $(C,x)$ is the datum of a section of  $\mathcal O_C(d\cdot x)$, i.e.\ a meromorphic function on $C$, with a pole of order at most $d$ at $x$ and holomorphic away from $x$. According to the Riemann--Roch theorem, the projection $V_d \to M_{g,1}$ is a vector bundle of rank $d-g+1$, whenever $d>2g-2$. We have a chain of subbundles: 
$$\ldots \subset V_d \subset V_{d+1} \subset V_{d+2} \subset \ldots$$ 
and there is an isomorphism $V_d \setminus V_{d-1} \cong M_g(\P^1,(d))$, where $(d)$ denotes the partition of $d$ into a single block; indeed, a meromorphic function with a unique pole at $x$ of exact order $d$ is the same as a degree $d$ branched cover of $\P^1$ with total ramification over $\infty$ at $x$.

Let $V_d'$ be the pullback of the bundle $V_d$ to $M_g^1$. As before we denote by $\alpha$ the permutation $(12\ldots d)$. There is a natural embedding
$$ \phi : M_g^\fr(\P^1,(d))_\alpha \hookrightarrow V_d',$$
since the projection $M_g(\P^1,(d))\to M_{g,1}$ lifts to $M_g^1$ on the cover $M_g^\fr(\P^1,(d))_\alpha.$ We now claim that the image of $\phi$ is a translate of the subbundle $V_{d-1}'$. This amounts to the following assertion: suppose sections $\sigma,\tau$ of $\mathcal O_C(d\cdot x)$ are given, and that $\sigma$ corresponds to a branched cover $C \to \P^1$ of degree $d$ with $x$ as a point of total ramification over $\infty$, taking a distinguished tangent vector at $P$ to the distinguished tangent vector at $\infty$ of $\P^1$. Then $\tau$ has the same property if and only if $\sigma - \tau$ is a section of $\mathcal O_C((d-1)\cdot x).$ Indeed, this claim is local, and can be verified in a complex-analytic neighborhood of $x$ on $C$. We may therefore consider the case $C=\P^1$, $x=\infty$, in which case a branched cover preserves a distinguished tangent vector at infinity precisely when it is defined by a \emph{monic} polynomial, and we arrive at the familiar statement that if $f$ and $g$ are degree $d$ polynomials with $f$ monic, then $g$ is monic if and only if $\deg(f-g)<d$. The above discussion and the Riemann--Roch theorem thus implies the following theorem:

\begin{thm}\label{RR}Suppose that $d>2g-1$. Then the projection $M_g^\fr(\P^1,(d))_\alpha \to M_g^1$ is an affine bundle of rank $d-g$.
\end{thm}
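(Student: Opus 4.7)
The plan is to deduce the theorem as an immediate corollary of Riemann--Roch and the geometric analysis of $\phi$ carried out in the preceding two paragraphs. Since the hypothesis $d>2g-1$ gives both $d,d-1>2g-2$, Riemann--Roch makes $V_d$ and $V_{d-1}$ into vector bundles on $M_{g,1}$ of ranks $d-g+1$ and $d-g$ respectively, with $V_{d-1}$ a corank-one subbundle of $V_d$; pulling back along $M_g^1\to M_{g,1}$ yields the analogous subbundle $V_{d-1}'\subset V_d'$ on $M_g^1$.

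Next I would verify the key geometric claim already sketched in the excerpt: the image of the embedding $\phi:M_g^\fr(\P^1,\alpha)\hookrightarrow V_d'$ is a translate of $V_{d-1}'$ inside $V_d'$. Fiberwise over a point $(C,p,\Vec u)\in M_g^1$, this is the assertion made just before the theorem --- namely that the sections $\tau\in H^0(C,\mathcal O_C(d\cdot p))$ inducing a branched cover with $p$ as a point of total ramification over $\infty$ and sending $\Vec u$ to $\Vec v$ form a coset of $H^0(C,\mathcal O_C((d-1)\cdot p))$. That this coset is nonempty in every fiber follows from the Riemann--Roch surjectivity of the quotient $V_d'\to V_d'/V_{d-1}'$ combined with the local monic-polynomial analysis at $p$ recorded in the excerpt.

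To see that these fiberwise cosets assemble into a genuine global translate of the subbundle $V_{d-1}'$, I would package the construction functorially: the line bundle $L:=V_d'/V_{d-1}'$ is canonically identified with $T_p^{\otimes d}$, which is tautologically trivialized on $M_g^1$ by the universal tangent vector $\Vec u^{\otimes d}$. The ``principal part of order $d$'' defines a morphism of vector bundles $V_d'\to L$ with kernel $V_{d-1}'$, and the normalization condition $\tau_*\Vec u=\Vec v$ amounts to pinning the image in $L$ to a fixed nowhere-vanishing global section of $L$. Hence the image of $\phi$ is the preimage of this section under $V_d'\to L$, which is tautologically a translate of the subbundle $V_{d-1}'$.

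Finally, a translate of a rank-$r$ subbundle inside a vector bundle is an affine bundle of rank $r$ over the base; with $r=d-g$ this yields the theorem. The main obstacle in my view is the globalization step of the previous paragraph, but once one recognizes the quotient map $V_d'\to L$ as a morphism of vector bundles canonically built from the universal family over $M_g^1$, there is nothing substantial left beyond the Riemann--Roch calculation of ranks.
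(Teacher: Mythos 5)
Your proposal is correct and follows essentially the same route as the paper: the paper's proof is precisely the paragraph preceding the theorem, which exhibits the image of $\phi$ as a translate of the subbundle $V_{d-1}'\subset V_d'$ and invokes Riemann--Roch for the rank count. Your extra care in globalizing the fiberwise coset description --- identifying $V_d'/V_{d-1}'$ with the $d$-th power of the tangent line at the marked point, trivialized by the universal tangent vector --- is a reasonable way to make precise what the paper settles by the local monic-polynomial computation.
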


In particular, to prove the Mumford conjecture it suffices to compute the rational cohomology of $M_g^\fr(\P^1,(d))_\alpha$ in a stable range. In the rest of this paper we will carry  this out, using more topological than algebro-geometric methods. From this point on, we will no longer distinguish between schemes, varieties, etc., and their sets of complex points. 

%The above discussion can be generalized to more general partitions and more marked points, but we will not have a need to do so. 

\section{Topological branched covers}\label{sec:2}

By a \emph{surface}, we mean a topological two-dimensional oriented manifold, possibly non-compact, possibly with boundary. To avoid trivialities, all surfaces are moreover assumed to be connected. 

The algebro-geometric notion of branched cover extends in the evident way to the purely topological setting. Namely, a continuous map $f: \Sigma' \to \Sigma$ between surfaces is said to be a \emph{branched cover} if it is proper, and a local homeomorphism outside of finitely many points in the interior of $\Sigma'$, called the \emph{ramification points}. At each of these finitely many points, $f$ can be written in local coordinates as the map $\C \to \C$ given by $z \mapsto z^r$, where $r>1$ is called the \emph{ramification index}. The images of ramification points are called \emph{branch points}. Every finite-sheeted covering space of the complement of a finite set in the interior of a surface extends uniquely to a branched cover. 

		Let $A \subset \Sigma$. Let $f: \Sigma' \to \Sigma$ be a branched cover of degree $d$. A \emph{trivialization} of $f$ over $A$ is an isomorphism 	over  $A$:    \[\begin{tikzcd}
			f^{-1}( A) \arrow{rd}{f}\arrow{rr}{\cong} & & \coprod_d A \arrow{ld}\\
			&  A & 
		\end{tikzcd}\]
		Note that the existence of a trivialization over $A$ implies that $A$ is disjoint from the branch locus of $f$.

We will in this section define a space $\Bra^d_{\Sigma\rel A}$ parametrizing degree $d$ branched covers of $\Sigma$ with a trivialization along $A$. It will be topologized in such a way that branch points can move around and collide with each other, if the monodromy allows it. Branch points can, in addition, vanish by wandering off to infinity along the directions where $\Sigma$ is not compact. (We strongly suggest that the reader compare this with the spaces $\mathsf{Fin}$ and $\mathsf{Hur}$ defined in \cite[Section~5]{BDPW}.) We will also see that if $\mathbb D$ denotes the closed disk and $y \in \partial \mathbb D$ is a boundary point, then $M_g^\fr(\P^1,\lambda)_\pi$ is homeomorphic to a connected component of $\Bra^d_{\mathbb D \rel y}$, for all $g$, and every $\pi \in \mathfrak S_d$ with cycle-type $\lambda$. 

If $A \neq \varnothing$ then  $\Bra^d_{\Sigma\rel A}$  is a topological space, but if $A = \varnothing$ then a branched cover may have nontrivial automorphisms, and we are obliged to consider the space $\Bra^d_{\Sigma\rel A}$ as a \emph{topological stack}. Helpful references for topological stacks are \cite{pardon1,pardon2}. Although it will be important for us that we work with stacks, the particular implementation details are not so critical. For us, a topological stack $X$ will be a sheaf of groupoids on the site of topological spaces with respect to the open cover topology, with proper diagonal, which admits a representable surjection $U \to X$ from a topological space $U$ such that $U \to X$ admits local sections. We could equally well have considered sheaves of groupoids on the sites of smooth manifolds (differentiable stacks), or on the site of compact Hausdorff spaces with respect to the topology of finite closed covers (condensed groupoids). On the other hand, it would not be sufficient to merely work with orbifolds; the spaces of branched covers that we consider will have singularities and be infinite-dimensional, as soon as the base surface fails to be compact.

\subsection{Definition of \texorpdfstring{$\Bra^d_{\Sigma\rel A}$}{Bra} when \texorpdfstring{$A$}{A} is nonempty.}\label{sec:def of Bra}

Assume that $A\neq \varnothing$. Let $\Bra^d_{\Sigma\rel A}$ be the set\footnote{More properly, we should say ``isomorphism classes of branched covers'', but if two branched covers are isomorphic compatibly with their trivializations over $A$ then such an isomorphism is unique.} of all degree $d$ branched covers of $\Sigma$, equipped with a trivialization along $A$. We topologize $\Bra^d_{\Sigma\rel A}$ as follows. A basis is indexed by triples $(K,\mathfrak U,E)$ with $K \subset \Sigma$ a compact subset containing $A$, $\mathfrak U = U_1 \cup \ldots \cup U_k$ a finite union of open disks with disjoint closures inside $K \setminus A$, and $E \to K \setminus \mathfrak U$ a degree $d$ covering space equipped with a trivialization over $A$. For $f \in \Bra^d_{\Sigma\rel A}$, we write $f^{-1}(K \setminus \mathfrak U) \cong E$ to mean that $f^{-1}(K \setminus \mathfrak U)$ and $E$ are isomorphic as covering spaces of $K \setminus \mathfrak U$ with trivializations over $A$. Note that such an isomorphism is unique if it exists. 
 The basic open set corresponding to the triple $(K,\mathfrak U,E)$ is 
		$$ B(K,\mathfrak U,E) = \{ \text{covers } f: \Sigma' \to \Sigma \text{ such that } f^{-1}(K \setminus \mathfrak U) \cong E \text{ and } f^{-1}(\mathfrak U) \text{ is a disjoint union of disks}\}.$$
		%Saying that $p^{-1}(\mathfrak U)$ is a disjoint union of disks is equivalent to saying for each open set $U_i$, we demand that the sum of all ramification indices above $U_i$ equals $N(\pi_i).$ 
		%When we write 
    Note that if $\Sigma$ is a compact surface, then we may always take $K=\Sigma$ when considering the basis for the topology on $\Bra^d_{\Sigma \rel A}$. On an open set $B(\Sigma,\mathfrak U,E)$ it is clear that each of the corresponding branched covers of $\Sigma$ have homeomorphic total space, so topological invariants of the covering surface define locally constant functions on $\Bra^d_{\Sigma \rel A}$. If $\Sigma$ is noncompact then this is far from the case. 

\subsection{Branched covers of the closed disk.}

Let $\mathbb D$ be the closed two-dimensional disk, and $y \in \partial \mathbb{D}$ a fixed point. We may think of $\mathbb D$ as the real-oriented blow-up of $\P^1$ at $\infty$, so that the interior of $\mathbb D$ is identified with $\mathbb A^1$, and $y$ is identified with a unit tangent vector at $\infty$ of $\P^1$. Since a branched cover can be described equivalently as a finite-sheeted covering space of the complement of a finite subset, we see that points of $\Bra^d_{\mathbb D\rel y}$ can be bijectively identified with pairs $(\Delta,\rho)$ where $\Delta \subset \mathbb A^1$ is a finite set, and $\rho : \pi_1(\mathbb A^1 \setminus \Delta,\Vec v) \to \mathfrak S_d$ is a homomorphism satisfying $N_\rho(z)>0$ for all $z \in \Delta$. Compare with the discussion in \S\ref{sec: space of branched covers of the line}.

In particular, there is an evident map of sets $M_g^\fr(\P^1, \lambda)_{\pi} \to \Bra^d_{\mathbb D \rel y}$, for every $\pi \in \mathfrak S_d$ with conjugacy class $\lambda$, and all $g \geq 0$. 

\begin{prop}\label{prop:algebraic vs topological branched covers}
    The map $M_g^\fr(\P^1,\lambda)_\pi \to \Bra^d_{\mathbb D \rel y}$ is a homeomorphism onto a connected component. 
\end{prop}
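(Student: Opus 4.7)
The proof plan rests on three ingredients: the common set-theoretic descriptions of both spaces given in \S\ref{subsec:points over C} and \S\ref{sec:def of Bra}; Riemann's existence theorem; and the observation that the homeomorphism type of the total space of a cover is locally constant on basic opens of $\Bra^d_{\mathbb D\rel P}$.

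The first step is to check that the map is a bijection onto a well-defined subset. Both $M_g^\fr(\P^1,\pi)$ and $\Bra^d_{\mathbb D\rel P}$ have been described set-theoretically as pairs $(D,\rho)$, with $D\subset \A^1$ finite, $\rho:\pi_1(\A^1\setminus D,\Vec v)\to \fS_d$ a homomorphism, and $N_\rho(Q)>0$ for every $Q\in D$. Under this identification, the map in the proposition is simply the inclusion of the subset cut out by the further conditions (i), (iii), (iv) of \S\ref{subsec:points over C}. Surjectivity onto this subset is Riemann's existence theorem: the topological covering of $\P^1$ specified by $\rho$ extends uniquely to an algebraic branched cover of a smooth projective curve, whose genus is forced to be $g$ by (iii) and whose connectedness is forced by (iv).

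The second step is to show that conditions (i), (iii), (iv) are locally constant on $\Bra^d_{\mathbb D\rel P}$. Fix a basic open $B(\mathbb D,\mathfrak U,E)$, in which we may take $\mathfrak U=U_1\cup\cdots\cup U_k$ to be a finite disjoint union of open disks with closures in the interior of $\mathbb D$; such basic opens form a cofinal system. Every cover $f$ in this open restricts to $E$ over the complement of $\mathfrak U$, and over each $U_i$ is a disjoint union of disks, one for each cycle of the monodromy of $E$ around $\partial U_i$. Hence the homeomorphism type of the total space $\Sigma'$ is rigidly determined by $E$, giving local constancy of the number of connected components and the genus, hence of (iii) and (iv). The monodromy around $\infty$ is a loop in $\mathbb D\setminus\mathfrak U$, hence is also determined by $E$, giving (i). Thus the image is a union of connected components. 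Combined with the connectedness of $M_g^\fr(\P^1,\pi)$ recorded in \S\ref{sec: space of branched covers of the line}, the image is exactly one connected component.

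The third step is the comparison of topologies. A basic open $B(\mathbb D,\mathfrak U,E)$ of $\Bra^d_{\mathbb D\rel P}$ intersected with the image corresponds to the basic open $B(\mathfrak U,\sigma)$ in the analytic topology of $M_g^\fr(\P^1,\pi)$ of \S\ref{subsec:points over C}, where $\sigma$ is the restriction of the monodromy of $E$ to $\pi_1(\C\setminus\mathfrak U,\Vec v)$. Conversely, any open $\mathfrak U\subset \C$ with compact closure can be enlarged to a finite disjoint union of open disks with closures in the interior of $\mathbb D$, and the corresponding $E$ reconstructed from $\sigma$, so the two bases are cofinal. This gives the homeomorphism onto the image.

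The main technical point lies in the second step: one has to convince oneself that although branch points are permitted to collide and split within $\mathfrak U$ as the monodromy allows, the homeomorphism type of $\Sigma'$ is unaltered, because the number of preimage components over each $U_i$ depends only on the conjugacy class of the monodromy of $E$ around $\partial U_i$, and not on the detailed configuration of the (possibly colliding) branch points inside.
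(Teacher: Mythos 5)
Your proposal is correct and follows essentially the same route as the paper: local constancy of the topological type of the covering surface (and of the boundary monodromy) on the basic opens $B(\mathbb D,\mathfrak U,E)$, connectedness of $M_g^\fr(\P^1,\pi)$, and a comparison of the two bases of open sets, with the ``union of disks'' condition handled by the fixed genus. One small slip: in the basis comparison you should \emph{shrink} $\mathfrak U$ to small disjoint disks around the branch points (so that the $\Bra$-basic open sits inside the given analytic one), not enlarge it.
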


\begin{proof}As discussed in \S\ref{sec:def of Bra}, compactness of $\mathbb D$ implies that connected genus $g$ covers of $\mathbb D$ form an open and closed subspace of $\Bra^d_{\mathbb D \rel y}$. The monodromy along the boundary is similarly constant on each open set $B(\mathbb D,\mathfrak U,E)$. 

Thus we just need to compare the topology defined in \S\ref{sec:def of Bra} with the one described in \S\ref{sec: space of branched covers of the line}. As it stands, they are almost the same. It remains to note that in \S\ref{sec: space of branched covers of the line} it suffices to consider open sets $\mathfrak U$ given by finite families of open disks with disjoint closures; moreover, shrinking $\mathfrak U$ further if necessary we may even assume that the cover over $\mathfrak U$ is a union of disks. 
\end{proof}
\begin{rem}The reader may ask why the condition that $f^{-1}(\mathfrak U)$ is a union of disks, which appears in the definition of the basic open sets of \S\ref{sec:def of Bra}, does not seem to have an analogue in the description of the topology in \S\ref{sec: space of branched covers of the line}. This condition appears in \S\ref{sec:def of Bra} in order to rule out, for example, two additional branch points with opposite monodromy appearing inside one of the disks $U_i$; such a cover should not be thought of as ``close'' to the ones in $B(K,\mathfrak U,E)$. The point is that such additional branch points will raise the genus of the covering surface, whereas in \S\ref{sec: space of branched covers of the line} the topology of the covering surface is fixed (it is a connected genus $g$ surface), making the condition automatic (possibly after shrinking $\mathfrak U$).\end{rem}

\begin{rem}\label{rem: connected components} From \cref{prop:algebraic vs topological branched covers}, and the connectedness of $M_g^\fr(\P^1,\lambda)_\pi$, it is not hard to deduce a description of the connected components of $\smash{\Bra_{\mathbb D \rel y}^d}$. Namely, a connected component is uniquely specified by a triple $(\pi,F,g)$, where $\pi \in \mathfrak S_d$ (recording the monodromy along $\partial \mathbb D$), $F$ is a set-partition of $\{1,\ldots,d\}$  whose blocks are unions of cycles of $\pi$ (recording which sheets above $y$ lie in the same connected component of the branched cover), and $g : F \to \Z_{\geq 0}$ is a function (recording the genera of the respective connected components of the branched cover). The function $g$ must have the property that it maps singleton blocks to $0$, since a degree $1$ cover of the disk cannot have positive genus.
\end{rem}

\subsection{Gluing of stacks}Gluing of schemes is described in many places in the literature, see e.g.\ \cite[Exercise~II.2.XX]{hartshorne}. The analogous description of how to glue stacks does not seem to appear in any standard reference, so we provide it here for the reader's convenience. We work in the setting of topological stacks, but the description holds in the algebraic setting, too. 

    Before giving the description, we make the remark that although stacks form a $(2,1)$-category, the family of all \emph{substacks} of a given stack is a \emph{set} (or more properly, a $0$-category). This means that two substacks of a given stack can only be isomorphic if they coincide; there is no additional choice of an isomorphism. 

Let $\{U_i\}_{i\in I}$ be a family of stacks on the site of topological spaces. Suppose we are given for all $i,j \in I$ an open substack $U_{ij} \hookrightarrow U_i$, with $U_{ii}=U_i$. Write $U_{ijk} = U_{ij} \cap U_{ik}$, and $U_{ijkl} = U_{ij} \cap U_{ik} \cap U_{il}$. 

Suppose we are further given:
\begin{enumerate}
	\item for each $i,j$ an isomorphism $\phi_{ij} : U_{ij} \to U_{ji}$, which maps $U_{ijk}$ isomorphically onto $U_{jik}$ for all $k$, with $\phi_{ji}=\phi_{ij}^{-1}$. 
	\item for each $i,j,k$, a $2$-morphism $\alpha_{ijk}$ as in the following diagram:
	\[\begin{tikzcd}
		U_{ijk}=U_{ikj}\arrow{rr}{\phi_{ik}} \arrow[dr,"\phi_{ij}"', start anchor = south]& \arrow[Rightarrow,d,"\alpha_{ijk}" description] & U_{kij}=U_{kji} \\
		& U_{jik} = U_{jki} \arrow[ur,"\phi_{jk}"', end anchor = south]& 
	\end{tikzcd}\]
\end{enumerate}We demand that there is a  commuting tetrahedron 
	\[\begin{tikzcd}
		& U_{ijkl} \arrow[ld,"\phi_{ij}"'] \arrow[dd,"\phi_{il}" near end] \arrow[rd,"\phi_{ik}"] & \\
		U_{jkli} \arrow[rd,"\phi_{jl}"'] \arrow[rr,"\phi_{jk}" near start, crossing over]& & U_{klij} \arrow[ld,"\phi_{kl}"]\\
		& U_{lijk} & 
	\end{tikzcd}\]
whose four faces, not indicated in the diagram, are given by the $2$-morphisms $\alpha_{ijk}$, $\alpha_{ijl}$, $\alpha_{ikl}$, and $\alpha_{jkl}$.

Then we may glue the family $\{U_i\}_{i\in I}$ to a unique stack $U$. Explicitly, $U$ represents the following functor: a map $S \to U$ (that is, an object of $U(S)$) is an open cover $S= \cup_{i\in I} S_i$, maps $f_i : S_i \to U_i$ such that $f_i^{-1}(U_{ij})=S_i \cap S_j$ for all $i,j$, and for each $i,j$ an isomorphism between the restrictions of $f_i$ and $f_j$ to $S_i \cap S_j$ (where we identify $U_{ij}(S_i \cap S_j)$ with $U_{ji}(S_i \cap S_j)$ via $\phi_{ij}$), satisfying the usual cocycle conditions. A morphism in $U(S)$ from $\{(S_i,f_i)\}$ to $\{(S_i',f_i')\}$ is a family of isomorphisms $f_i|_{S_i \cap S_j'} \stackrel\sim\to f_j'\vert_{S_j' \cap S_i}$ which agree on overlaps.  This $U$ is, first of all, a stack. Moreover, it is a topological stack if each $U_i$ is a topological stack: if $V_i\to U_i$ is an atlas for each $i$, then $\coprod_i V_i \to U$ is an atlas for $U$. 

Let us also consider the special case of gluing together a family of \emph{quotient stacks}. Suppose in the above situation that each stack $U_i$ is a quotient stack $[X_i/G_i]$, and for all $i,j$ we are given an open $G_i$-invariant subspace $X_{ij} \subseteq X_i$. We let $X_{ijk}=X_{ij}\cap X_{ik}.$ The gluing data may then be presented somewhat more concretely as choosing a $G_i$-equivariant $G_j$-torsor $Y_{ij}\to X_{ij}$ for each $i, j$, and a $(G_i \times G_j)$-equivariant isomorphism $\phi_{ij} : Y_{ij}\to Y_{ji}$ such that $\phi_{ij}=\phi_{ji}^{-1}$, with the property that $\phi_{ij}$ carries the inverse image of $X_{ijk}$ to the inverse image of $X_{jik}$. The $2$-morphism $\alpha_{ijk}$ corresponds to the choice of an isomorphism between two natural $G_i$-equivariant $G_k$-torsors over $X_{ijk}$: we may take the restriction of $Y_{ik} \to X_{ik}$, or we may take the $G_j$-equivariant $G_k$-torsor 
$$ Y_{ji} \times_{X_j} Y_{jk} \to Y_{ji} \times_{X_j} X_{jk},$$
pull it back along $\phi_{ij}$, and use $G_j$-equivariance to descend it to $X_{ijk}$.

\subsection{Definition of \texorpdfstring{$\Bra^d_{\Sigma\rel A}$}{Bra} when \texorpdfstring{$A$}{A} is empty.}

We are now ready to define $\Bra^d_{\Sigma\rel A}$ when $A=\varnothing$. In this case we omit ``$\mathsf{rel}\, A$'' from the notation. 

If $\Sigma$ has boundary, it is quite easy to define $\Bra_\Sigma^d$. Choose a point $y \in \partial \Sigma$, and make the definition
$$ \Bra_\Sigma^d := [\Bra^d_{\Sigma\rel y} / \mathfrak S_d]$$
 where $\mathfrak S_d$ acts by changing the trivialization at $y$, and $[-/\mathfrak S_d]$ denotes the stack quotient. 

When $\partial \Sigma=\varnothing$, the above definition cannot just be copied, since there is no point at which we are guaranteed that a trivialization should exist: every point of $\Sigma$ is potentially a branch point. If we choose an interior point $x\in \Sigma$ then we can still form the quotient stack $[\Bra^d_{\Sigma\rel x} / \mathfrak S_d]$, but it will only give us the open substack of $\Bra^d_\Sigma$ defined by the condition that there is no branch point at $x$. However, if we consider the family of all stacks $[\Bra^d_{\Sigma\rel x} / \mathfrak S_d]$ as $x$ varies, then these stacks are going to form an open cover of $\Bra_\Sigma^d$, and we may define $\Bra_\Sigma^d$ by gluing. Since we are gluing a family of quotient stacks, we are in precisely the setting of the final paragraph of the previous subsection. 

To ease notation, set $X_x = \Bra^d_{\Sigma \rel x}$, and $U_x = [X_x/\fS_d]$. For $x, y \in \Sigma$, let $X_{xy}$ be the open subspace of $X_x$ defined by the condition of having no branch point at $y$. For $x\neq y \in \Sigma $ let $Y_{xy}=\Bra_{\Sigma \rel \{x,y\}}$. We let also $Y_{xx}$ be the space parametrizing branched covers of $\Sigma$ equipped with \emph{two} trivializations at $x$. Then $Y_{xy}$ is an $\fS_d$-equivariant $\fS_d$-torsor over $X_{xy}$ for all $x,y$. The identity map is an $(\fS_d \times \fS_d)$-equivariant isomorphism $\phi_{xy} : Y_{xy}\to Y_{yx}$. The isomorphism on triple overlaps $X_{xyz}$ is the obvious one coming from the fact that both $\fS_d$-torsors over $X_{xy} \cap X_{xz}$ are identified with the open subspace of $\Bra_{D \rel \{x,z\}}^d$ where there is no branching at $y$, and one may similarly verify the tetrahedron equation. This is exactly the gluing data needed to glue together the quotient stacks $U_x$, and thereby to define the space $\Bra_\Sigma^d$ in general.

%To ease notation, write $U_P = [\Bra^d_{\Sigma\rel y} / \mathfrak S_d]$. For $Q \in \Sigma$ let $U_{PQ}$ be the open substack of $U_P$ defined by the condition of having no branch point at $Q$. We let $U_{PQR}=U_{PQ}\cap U_{PR}$, etc.

%For $P\neq Q \in \Sigma $ let $V_{PQ}=\Bra_{\Sigma \rel y\cup Q}$. We let also $V_{PP}$ be the space parametrizing branched covers of $\Sigma$ equipped with \emph{two} trivializations at $P$. Then for all $P,Q$ there is an isomorphism $[V_{PQ}/\mathfrak S_d \times \mathfrak S_d] \stackrel \sim \to U_{PQ}$. We define $V_{PQR}$, etc.\, in the evident way. 

%Let $\phi_{PQ} : U_{PQ}\to U_{QP}$ be the isomorphism
%\[ U_{PQ} \stackrel\sim\leftarrow [V_{PQ}/\mathfrak S_d \times\mathfrak S_d] \cong [V_{QP}/\mathfrak S_d \times\mathfrak S_d] \stackrel{\sim}{\to} U_{QP}\]
%Then $\phi_{PP}=\mathrm{id}$. For any $P,Q,R$ we have similarly $U_{PQR} \stackrel\sim\leftarrow [V_{PQR}/\mathfrak S_d^3]$.	The space $[V_{PQR}/\mathfrak S_d^3]$ allows us to define a $2$-cell $\alpha_{PQR}$ as in the preceding subsection, and the space $[V_{PQRS}/\mathfrak S_d^4]$ allows us to fill in the solid tetrahedron. That is, both the commuting triangle and the commuting tetrahedron extend to commuting diagrams indexed on the barycentric subdivisions of the respective simplices, with $[V_{PQR}/\mathfrak S_d^3]$ resp.\ $[V_{PQRS}/\mathfrak S_d^4]$ placed at the barycenters. 

%It follows that we have gluing data allowing us to glue the stacks $U_P$ along intersections, to define the space $\Bra_\Sigma^d$ in general.

\subsection{Functoriality}\label{functorial for open immersions} 
A certain amount of functoriality of the construction $\Bra$ will be important in our arguments. Specifically, if $i:U \hookrightarrow \Sigma$ is an open embedding, then there is a continuous map $i^\ast:\Bra^d_{\Sigma \rel A} \to \Bra^d_{U \rel (U\cap A)}$, informally defined by taking a branched cover of $\Sigma$ and restricting it to $U$. When $U\cap A \neq \varnothing$ this is a direct verification in terms of the topology defined in \cref{sec:def of Bra}. When $U \cap A = \varnothing$ the map is instead obtained by gluing together the maps $[\Bra^d_{\Sigma \rel (A\cup\{x\})}/\fS_d] \to [\Bra^d_{U \rel  x}/\fS_d]$ as $x$ ranges over $U$.

 \subsection{The moduli problem} To a reader accustomed to stacks in algebraic geometry, our usage of stacks in this section may seem a bit artificial. It would be more natural to describe $\Bra_\Sigma^d$ as a stack by exhibiting a geometrically meaningful functor which it represents. When $\Sigma$ is the closed disk, the topological stack $\Bra_{\Sigma \rel y}^d$ can be obtained as the complex points of an algebraic stack which represents a meaningful functor, according to \cref{prop:algebraic vs topological branched covers}. This tautologically gives rise to a functor that $\Bra_{\Sigma \rel y}^d$ itself represents \cite[Section~4]{mikala}, but this is not a very convenient description. 

 We do have a candidate for a functor represented by $\Bra_\Sigma^d$, as follows. 	Let $S$ and $T$ be topological spaces.  Let $f : T \to S \times \Sigma$ be a continuous map. Write $T^\circ = f^{-1}(S \times \Sigma^\circ)$, and $\partial T = f^{-1}(S \times \partial \Sigma)$. We say that $f$ is a \emph{family of branched covers of $\Sigma$ parametrized by $S$} if:
		\begin{enumerate}
			\item $f$ is universally closed. 
			\item $T \to S$ is locally on $T^\circ $ isomorphic to the projection $S \times \R^2 \to S$.
			\item If $U \subset T$ is the maximal open subset on which $f$ is a local homeomorphism, then $U$ contains $\partial T$, and $(T \setminus U) \to S$ has finite fibers.
		\end{enumerate}
  It seems likely that $\Bra^d_\Sigma$ represents the functor assigning to a topological space $S$ the groupoid of all families of degree $d$ branched covers of $\Sigma$ parametrized by $S$, possibly under some point-set hypotheses on $S$. However, it will not be necessary for our arguments to know this. 

\section{Stabilization}\label{sec:3}

For a surface $\Sigma$, there is a natural \emph{stabilization map} $\Bra_\Sigma^d \hookrightarrow \Bra_\Sigma^{d+1}$, defined by adding a trivial sheet to get a disconnected cover. If $\Sigma$ is compact, then the above map may be identified with the inclusion of a union of connected components. We set \[\Bra_{\Sigma} := \varinjlim_d \Bra_{\Sigma}^d.\]
Now let $\square = [0,1]^2$, $\squareopen =  [0,1] \times\{0\} \cup \{0,1\} \times [0,1] $, $\squaretop = [0,1] \times \{1\}$, $\partial = \squaretop \cup \squareopen$. The space $\Bra_{\square \rel \squareopen}^d$ is a topological $E_1$-algebra for all $d$, as illustrated schematically in the following figure:
\[\begin{tikzpicture}[baseline={(0,0.4)}]
\draw (1,1) -- (0,1);
\draw[dashed] (0,1) -- (0,0) -- (1,0) -- (1,1);
\end{tikzpicture} \qquad \begin{tikzpicture}[baseline={(0,0.4)}]
\draw (1,1) -- (0,1);
\draw[dashed] (0,1) -- (0,0) -- (1,0) -- (1,1);
\end{tikzpicture} \qquad \leadsto \qquad \begin{tikzpicture}[baseline={(0,0.4)}]
\draw (2,1) -- (0,1);
\draw[dashed] (0,1) -- (0,0) -- (1,0) -- (1,1);
\draw[dashed] (1,0) -- (2,0) -- (2,1);
\end{tikzpicture} \]
The inclusion $\Bra_{\square \rel \squareopen}^d \hookrightarrow \Bra_{\square \rel \squareopen}^{d+1}$ is a morphism of $E_1$-algebras, and the colimit $\Bra_{\square \rel \squareopen}$ is again an $E_1$-algebra.

We say that a connected component of $\Bra_{\square \rel \squareopen}^d$ is a \emph{good component of genus $g$} if it parametrizes genus $g$ branched covers of the square such that the monodromy along $\partial$ is a $d$-cycle, and $d>2g-1$. Every good component of genus $g$ is homotopy equivalent to $M_g^1$, by \cref{RR} and \cref{prop:algebraic vs topological branched covers}. We define the good components of $\Bra_{\square \rel \squareopen}$ to be the union of the good components of $\Bra_{\square \rel \squareopen}^d$ for all $d$.

\begin{prop}\label{prop: good components}
    Let $X$ and $Y$ be good components of $\Bra_{\square \rel \squareopen}$, with $X \simeq M_g^1$ and $Y \simeq M_h^1$, respectively. Suppose $a \in \Bra_{\square \rel \squareopen}$ is such that $a \cdot X \subseteq Y$, i.e.\ multiplication by $a$ maps $X$ into $Y$. Then $h \geq g$, and $a\cdot -$ is homotopic to the Harer stabilization map. 
\end{prop}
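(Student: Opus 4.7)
The plan is to translate the $E_1$-multiplication by $x$ into an explicit geometric gluing of surfaces, verify the genus inequality by Riemann--Hurwitz, and then identify the resulting map with Harer stabilization. First I would fix representatives of $X$, $Y$ and $x$ at a common degree $d$: using \cref{prop:algebraic vs topological branched covers} together with \cref{RR}, the good component $X$ is an affine bundle over $M_g^1 \simeq B\Mod_g^1$, and similarly $Y \simeq B\Mod_h^1$. A point $t \in X$ corresponds to a connected branched cover $\Sigma_t \to \square$ of genus $g$ with trivialization along $\squareopen$ and boundary monodromy a $d$-cycle, while $x$ corresponds to a possibly disconnected branched cover $\Sigma_x \to \square$ with top-edge monodromy some $\alpha_x \in \mathfrak S_d$. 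By construction of the $E_1$-structure, $x \cdot t$ is obtained by horizontally concatenating the two squares, and since the trivializations on the shared inner edge match sheet-by-sheet, $\Sigma_{xt}$ is formed from $\Sigma_x \sqcup \Sigma_t$ by identifying the $d$ corresponding preimage arcs. In particular $\Sigma_t$ sits canonically as a subsurface of $\Sigma_{xt}$.

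To establish $h \geq g$ I would run an Euler characteristic count. Writing $c_x$, $g_x$ and $b_x = d - N(\alpha_x)$ for the number of connected components, total genus, and number of boundary circles of $\Sigma_x$, the identity $\chi(\Sigma_{xt}) = \chi(\Sigma_x) + \chi(\Sigma_t) - d$ (the interface being $d$ arcs) gives
\[ h - g \;=\; g_x + \tfrac{1}{2}(d + b_x) - c_x. \]
Each connected component of $\Sigma_x$ contains at least one boundary circle, so $b_x \geq c_x$; and $b_x \leq d$ because the cycles of $\alpha_x$ have total length $d$. Hence $d + b_x \geq 2c_x$, so $h - g \geq g_x \geq 0$.

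Finally I would identify $x \cdot -$ with the Harer stabilization map. Since $X \simeq B\Mod_g^1$ and $Y \simeq B\Mod_h^1$ are aspherical, the homotopy class of $x \cdot -$ is determined by its effect on fundamental groups. The subsurface inclusion $\Sigma_t \hookrightarrow \Sigma_{xt}$ exhibits $\Sigma_{xt}$ as $\Sigma_t$ with the fixed cap $\Sigma_x$ glued along the $d$ boundary arcs, and a self-diffeomorphism of $\Sigma_t$ fixing a neighborhood of the trivialization pointwise extends by the identity on $\Sigma_x$ to a self-diffeomorphism of $\Sigma_{xt}$; the resulting homomorphism $\Mod_g^1 \to \Mod_h^1$ is precisely the $(h-g)$-fold iterate of the Harer stabilization. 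I expect the main difficulty to lie in this last step: one must upgrade the pointwise subsurface inclusion to a continuous family as $t$ varies in $X$, verify that this family is homotopic to the actual multiplication map, and reconcile the boundary and tangent-vector conventions of $\Bra^d_{\square \rel \squareopen}$ with the classical Harer setup.
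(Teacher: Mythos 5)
Your argument is correct and its geometric core is the same as the paper's: multiplication by $x$ glues a fixed surface $\Sigma_x$ onto the varying fiber, and gluing a fixed surface is Harer stabilization. Two points of comparison. First, your Euler characteristic computation of $h-g$ is a genuine addition: the paper asserts $h\ge g$ without proof (it is implicit in "boundary connect sum with a fixed surface"), whereas your identity $h-g = g_x + \tfrac12(d+b_x) - c_x$ together with $b_x\ge c_x$ and $d\ge c_x$ makes it explicit. Second, the "main difficulty" you flag at the end --- upgrading the fiberwise gluing to a statement about the map of spaces $X\to Y$ --- is exactly what the paper's proof is organized around, and it resolves it without passing through $\pi_1$ and asphericity: since $X\cong M_g^\fr(\P^1,\pi)$ by \cref{prop:algebraic vs topological branched covers}, it carries a universal family of branched covers, whose real oriented blow-up along the section is a surface bundle with trivialized boundary; this gives a weak equivalence $X\to[\ast/\mathrm{Homeo}^+_\partial(\Sigma_g)]$, and multiplication by $x$ fits into a commuting square over the map $[\ast/\mathrm{Homeo}^+_\partial(\Sigma_g)]\to[\ast/\mathrm{Homeo}^+_\partial(\Sigma_h)]$ given by gluing on the fixed surface $\Sigma_x$. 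That classifying-stack formulation packages the continuity-in-families issue once and for all, and is probably the cleanest way to discharge the step you left open; your $\pi_1$ route would also work but requires you to identify the monodromy of the universal family anyway, so it is not actually shorter. Both arguments share the remaining (standard, and equally elided in the paper) fact that gluing any fixed cobordism that raises genus by $h-g$ and preserves a single boundary component induces, up to conjugacy, the $(h-g)$-fold iterate of the standard stabilization.
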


\begin{proof}By \cref{prop:algebraic vs topological branched covers}, we have $X \cong M_g^\fr(\P^1,\lambda)_\pi$ for some  $\pi$. In particular, $X$ comes with a universal family of branched covers. The total space is a surface bundle over $X$ with a section, and a marked tangent vector along the section. Taking the real oriented blow-up along the section we get a surface bundle with boundary, and hence a natural map from $X$ to $[\ast/\mathrm{Homeo}_\partial^+(\Sigma_g)]$, the topological moduli stack of topological genus $g$ surface bundles with trivialized boundary, which is a weak equivalence. Similarly we get $Y \to [\ast/\mathrm{Homeo}_\partial^+ (\Sigma_h)]$. The diagram
    \[\begin{tikzcd}
        X \arrow{d}{a \cdot -} \arrow[r] & \hspace{0pt} [\ast/\mathrm{Homeo}_\partial^+(\Sigma_g)] \arrow[d]\\
        Y \arrow[r] & \hspace{0pt} [\ast/\mathrm{Homeo}_\partial^+(\Sigma_h)]
    \end{tikzcd}\]
    commutes, where the right vertical map is given by boundary connect sum with a fixed topological surface corresponding to  $a$. But boundary connect sum with a fixed surface is Harer stabilization. 
\end{proof}

%With this proposition in place, it is now natural to apply the group-completion theorem to $\Bra_{\square \rel \squareopen}$. We state it for the reader's convenience, but we first need some preliminary definitions.

%\begin{defn}If $S$ is any discrete monoid, then we let $ES$ denote the category whose objects are the elements of $S$, and a morphism $u \to v$ is an element $s \in S$ with $us=v$.
%\end{defn}\begin{defn}If $R$ is a ring, then a multiplicative subset $S \subset R$ is said to satisfy the \emph{right Ore conditions} if the following two conditions are satisfied: 
%\begin{enumerate}[(i)]
 %   \item For $r \in R$ and $s \in S$ with $sr = 0$, there exists $t \in S$ with $rt=0$.
 %   \item For $r \in R$ and $s \in S$, there exists $r' \in R$ and $s' \in S$ with $rs' = sr'$. 
%\end{enumerate}\end{defn}
%There are many variations of the group-completion theorem in the literature. The following \cref{groupcompletion} is closest to the original version of Quillen \cite[Appendix~Q]{quillen-groupcompletion}.
%\begin{thm}[Group-completion theorem]\label{groupcompletion}Let $M$ be a topological $E_1$-algebra, with connected components $M=\coprod_{s \in S} M_s$. Suppose that $S = \pi_0(M) \subset H_\bullet(M;\Z)$ satisfies the right Ore conditions, and that $ES$ is a filtered category. Then 
%$$ H_\bullet(\Omega_0 BM;\Z) \cong \varinjlim_{s \in ES} H_\bullet(M_s;\Z).$$
%\end{thm}

With this proposition in place, it is now natural to apply the group-completion theorem (\cref{groupcompletion}) to $\Bra_{\square \rel \squareopen}$. We now verify that $\Bra_{\square \rel \squareopen}$ satisfies the hypotheses of the group-completion theorem, starting with the Ore conditions. Let $\varpi : \Bra_{\square \rel \squareopen}\to \mathfrak S_\infty$ be the monodromy homomorphism, recording the monodromy of the branched cover along $\partial$. The monodromy map is equivariant, where $\mathfrak S_\infty$ acts on itself by conjugation, and on $\Bra_{\square \rel \squareopen}$ by changing the trivialization along $\squareopen$. We denote the latter action symbolically by $(\pi, a) \mapsto a^\pi$, where $\pi \in \mathfrak S_\infty$ and $a \in \Bra_{\square \rel \squareopen}$. If we think of points of $\Bra_{\square \rel \squareopen}$ as pairs $(\Delta,\rho)$ with $\Delta$ a finite subset of the interior of $\square$, and $\rho : \pi_1(\square\setminus \Delta) \to \mathfrak S_\infty$ (with a basepoint chosen along $\squareopen$), then the action of $\mathfrak S_\infty$ is simply by conjugating such homomorphisms. 

\begin{lem}
    The following two maps $\Bra_{\square \rel \squareopen} \times \Bra_{\square \rel \squareopen} \to \Bra_{\square \rel \squareopen}$ are homotopic:
    \begin{align*}
        (a,b) & \mapsto a \cdot b \\
        (a,b) & \mapsto b \cdot a^{\varpi(b)}.
    \end{align*}
    In particular, the two corresponding maps $H_\bullet(\Bra_{\square \rel \squareopen};\Z) \times \pi_0(\Bra_{\square \rel \squareopen}) \to H_\bullet(\Bra_{\square \rel \squareopen};\Z)$ coincide, which implies that $\pi_0(\Bra_{\square \rel \squareopen}) \subset H_\bullet(\Bra_{\square \rel \squareopen};\Z)$ satisfies the right (and left) Ore conditions. 
\end{lem}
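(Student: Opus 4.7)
The intuition is one of braided commutativity: although $\Bra_{\square \rel \squareopen}$ is only an $E_1$-algebra under horizontal composition, the free top edge $\squaretop$ of $\square$ leaves enough room to isotope the branch points of $x$ over those of $y$. During such an isotopy, the natural path from the basepoint $\ast \in \squareopen$ to $x$'s branch points gets rerouted around $y$'s branch points, and a loop enclosing the entire branch set of $y$ based at $\ast$ has monodromy $\varpi(y)$ by definition of $\varpi$. The trivialization of $x$ as seen at $\ast$ is therefore conjugated by $\varpi(y)$.

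The plan is to write down an explicit homotopy $F : \Bra_{\square \rel \squareopen} \times \Bra_{\square \rel \squareopen} \times [0,1] \to \Bra_{\square \rel \squareopen}$ between $(x,y) \mapsto x \cdot y$ and $(x,y) \mapsto y \cdot x^{\varpi(y)}$. Working with the description of points of $\Bra_{\square \rel \squareopen}$ as pairs $(z,\rho)$ with $z$ a finite subset of the interior and $\rho$ a monodromy homomorphism (as in \cref{subsec:points over C} adapted to the disk), choose a continuous family of ambient isotopies $(h_t)_{t\in[0,1]}$ of $\square$ fixing $\squareopen$ pointwise, together with a continuous family of systems of paths from $\ast$ to each branch point. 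At $t=0$, $h_0 = \mathrm{id}$ and the branch set of $x \cdot y$ sits in its standard place (rescaled $z_x$ in the left half, rescaled $z_y$ in the right half), with paths from $\ast$ running along the bottom. As $t$ grows, $h_t$ carries the rescaled $z_x$ upward and rightward, over the top of the rescaled $z_y$, while symmetrically sliding the rescaled $z_y$ to the left half. Set $F_t(x,y)$ to be the branched cover with branch set $h_t(z_x \sqcup z_y)$ and monodromy transported from that of $x \cdot y$ via $h_t$. Continuity of $F$ with respect to the topology of \cref{sec:def of Bra} is immediate from the family of basic open sets $B(K,\mathfrak U, E)$, since combinatorial monodromy data is unchanged by $h_t$.

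At $t = 0$ we recover the standard $E_1$-multiplication, so $F_0(x,y) = x \cdot y$. At $t = 1$, the rescaled $z_x$ now lies in the right half and the rescaled $z_y$ in the left, while the transported path from $\ast$ to each $x$-branch point now runs over the top of the $y$-branch set. Comparing such a transported path $\alpha_1$ to the standard bottom-edge path $\alpha_0$ to the same branch point, the loop $\alpha_1 \alpha_0^{-1}$ encircles the entire $y$-branch set, hence has monodromy $\varpi(y)$; consequently the monodromy of each $x$-branch point, measured with respect to the standard paths, is $\rho_x$ conjugated by $\varpi(y)$, i.e.\ the monodromy of $x^{\varpi(y)}$. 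Thus $F_1(x,y) = y \cdot x^{\varpi(y)}$. Passing to $H_\bullet$, the identity $r \cdot s = s \cdot r^{\varpi(s)}$ gives the right Ore condition (ii) with $s' = s$, $r' = r^{\varpi(s)}$, and (i) follows by the same identity with the roles of the factors exchanged. The main technical obstacle is the careful bookkeeping of basepoint paths during the isotopy, and confirming that the correction loop picked up at $t=1$ is genuinely a loop encircling \emph{all} of $y$'s branch set — a check that follows from the simple connectivity of $\square$ and the hypothesis that the isotopy fixes $\squareopen$.
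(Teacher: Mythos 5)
Your proof is correct and is essentially the paper's argument: the paper also rotates the branch configurations of $x$ and $y$ past each other through the free top edge and reads off the conjugation by $\varpi(y)$ from the rerouted basepoint path (the paper conveys this with a single figure, whereas you spell out the isotopy, the path bookkeeping, and the continuity check). The only point treated as lightly by you as by the paper is the deduction of Ore condition (i), which needs the extra observation that $r\mapsto r^{\pi}$ is a ring automorphism of $H_\bullet$ induced by a homeomorphism, so that $sr=0$ can be converted into $r\cdot t=0$ for a suitable $t\in\pi_0$.
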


\begin{proof}Rotate the configurations of branch points of $a$ and $b$ past each other, as in the figure below:
\[\begin{tikzpicture}[baseline={(0,1)},scale=0.6]
\draw (0,0) to [bend right] (1,1.5);
\draw[->-] (-1,1.5) to (-1.3,1.5) to (-1.3,2.5) to (-0.3,2.5) to (-0.3,1.5) to (-1,1.5);
\node at (0.8,2) {$b$};
\draw (0,0) to [bend left] (-1,1.5);
\draw[->-] (1,1.5) to (0.3,1.5) to (0.3,2.5) to (1.3,2.5) to (1.3,1.5) to (1,1.5);
\node at (-0.8,2) {$a$};
\draw[dashed] (-2,4) -- (-2,0) -- (2,0) -- (2,4);
\draw (-2,4) -- (2,4);
\end{tikzpicture} \qquad \leadsto \qquad \begin{tikzpicture}[baseline={(0,1)},scale=0.6]
\draw (0,0) to [out=90,in=270] (-1,1.5);
\draw[->-] (-1,1.5) to (-1.3,1.5) to (-1.3,2.5) to (-0.3,2.5) to (-0.3,1.5) to (-1,1.5);
\node at (0.8,2) {$a$};
\draw (0,0) to [out=150,in=270] (-1.8,2) to [out=90,in=90,looseness=3] (0,2) to[out=270,in=270, looseness=4] (1,1.5);
\draw[->-] (1,1.5) to (0.3,1.5) to (0.3,2.5) to (1.3,2.5) to (1.3,1.5) to (1,1.5);
\node at (-0.8,2) {$b$};
\draw[dashed] (-2,4) -- (-2,0) -- (2,0) -- (2,4);
\draw (-2,4) -- (2,4);
\end{tikzpicture}\]
Thinking of points of $\Bra_{\square \rel \squareopen}$ in terms of configurations and homomorphisms out of the fundamental group, the figure shows that the effect on monodromy is as claimed. 
\end{proof}

Recall from \cref{rem: connected components} the description of connected components of $\Bra_{\square \rel \squareopen}^d$ as  triples $(\pi,F,g)$, with $\pi \in \mathfrak S_d$, $F$ a set-partition of $\{1,\ldots,d\}$  whose blocks are unions of cycles of $\pi$, and $g : F \to \Z_{\geq 0}$ a function taking singletons to zero. In these terms, the monoid operation on $\pi_0 \smash{\Bra^d_{\square \rel \squareopen}}$ is given by 
\[ (\pi,F,g) \ast (\pi',F',g') = (\pi \circ \pi',F \vee F', g \ast g').\]
Here $\pi \circ \pi'$ denotes multiplication of permutations, $F \vee F'$ denotes taking the join in the partition lattice, and the formula for $g \ast g'$ is somewhat complicated: for $S \in F \vee F'$, one has
\[ (g \ast g')(S) - 1 =  |S| + \sum_{\substack{T \in F \\ T \subseteq S}} (g(T)-1) + \sum_{\substack{T' \in F' \\ T' \subseteq S}} (g'(T')-1).\]
This is most easily justified by a calculation with Euler characteristic, and the fact that the surfaces are connected. The monoid $\pi_0\Bra_{\square \rel \squareopen}$ is obtained from $\smash{\pi_0\Bra_{\square \rel \squareopen}^d }$ by taking the union as $d$ grows. 

\begin{lem}The category $E\pi_0 \Bra_{\square \rel \squareopen}$ is filtered. The subcategory of all good components and the subcategory $E\pi_0 \Bra_{\square \rel \partial}$ are also filtered, as well as cofinal.\end{lem}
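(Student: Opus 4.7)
The plan is to verify the two standard axioms for filteredness of $E\pi_0 \Bra_{\square \rel \squareopen}$ directly, using the explicit combinatorial description of the monoid $M := \pi_0 \Bra_{\square \rel \squareopen}$ and its multiplication $(\pi, F, g) \ast (\pi', F', g') = (\pi \pi', F \vee F', g \ast g')$ given just above the lemma. Cofinality of the two subcategories will then be immediate from the same constructions.

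\textbf{Existence of cocones.} Given $a = (\pi_a, F_a, g_a)$ and $b = (\pi_b, F_b, g_b)$ in $M$, first stabilize both to a common degree $d$ by adjoining trivial sheets. Now fix a target $c = (\tau, \{[D]\}, g_0)$ where $D \geq d$, $\tau \in \fS_D$ is a $D$-cycle, and $g_0$ is a large positive integer; this $c$ is a good component once $D > 2g_0 - 1$. I would then seek $s, t \in M$ with $a \ast s = c = b \ast t$. The permutation factor forces $\pi_s = \pi_a^{-1}\tau$ and $\pi_t = \pi_b^{-1}\tau$. Take $F_s = F_t = \{[D]\}$, which makes $F_a \vee F_s = \{[D]\} = F_b \vee F_t$ automatically. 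The genus factor then reduces to a single scalar equation
\[ g_0 - 1 = D + \sum_{T \in F_a}(g_a(T) - 1) + (g_s([D]) - 1), \]
which has a nonnegative solution in $g_s([D])$ provided $D$ and $g_0$ are chosen large enough (and analogously for $g_t$). This produces the required cocone.

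\textbf{Coequalizing parallel pairs.} Suppose $s, s'$ are parallel, so $a \ast s = a \ast s'$. Cancellation in $\fS_d$ gives $\pi_s = \pi_{s'}$, and the equalities of partitions and of genera impose constraints I will use but not require to collapse $s$ and $s'$ individually. To produce $t$ with $s \ast t = s' \ast t$, I would take $t$ with single-block partition $F_t = \{[N]\}$ (where $N$ is the common degree), trivial permutation, and large genus. Then $F_s \vee F_t = \{[N]\} = F_{s'} \vee F_t$, while the explicit formula for $g \ast g_t$ depends on $F_s$ and $g_s$ only through $\sum_{T \in F_s}(g_s(T)-1)$. The identity $g_a \ast g_s = g_a \ast g_{s'}$, summed over all blocks of $F_a \vee F_s = F_a \vee F_{s'}$, yields exactly equality of these sums for $s$ and $s'$, which is what one needs for $s \ast t = s' \ast t$.

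\textbf{The two subcategories.} The good components are closed under right multiplication by any element, and by construction every common cocone in the argument above is a good component; in particular every object admits a morphism to a good component, and this suffices for cofinality inside the already-filtered $EM$. For $E\pi_0 \Bra_{\square \rel \partial}$, one repeats the cocone construction with $\tau = \mathrm{id}_{\fS_D}$ instead of a $D$-cycle (still solvable, since we no longer require $c$ to be a good component and the partition/genus choices are unconstrained by $\tau$); this produces cocones with trivial boundary monodromy, and in particular a morphism from every object of $M$ to an object of $E\pi_0\Bra_{\square\rel\partial}$.

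\textbf{Main obstacle.} The only nontrivial point is the arithmetic of the genus operation $g \ast g'$: the $|S|$ term and the singleton constraint $g(\{i\}) = 0$ mean that not every triple $(\pi, F, g)$ is realized, and one must verify that the constraints on $g_s$, $g_t$, and $g_t$ in the coequalizer step admit nonnegative solutions. Going to sufficiently large $D$ and $g_0$ in the common target is what makes this step go through, and this is why good components (which exist only for $d > 2g - 1$) are the natural cofinal family.
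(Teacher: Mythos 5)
Your argument is correct in substance, but it takes a genuinely different route from the paper for the first filteredness axiom (existence of cocones). The paper gets that axiom for free from the preceding lemma: since $x\cdot y$ and $y\cdot x^{\varpi(y)}$ are homotopic, one has $xu=yv$ on $\pi_0$ with $u=y$ and $v=x^{\varpi(y)}$, and no further construction is needed. You instead build a common cocone by hand from the combinatorial model of $\pi_0\Bra_{\square\rel\squareopen}$, solving separately for the permutation, partition, and genus coordinates. This is more work, but it is self-contained (it never invokes the rotation/braiding homotopy) and has the pleasant side effect that your cocones can be taken to be good components, so the cofinality assertions --- which the paper dismisses as ``easy'' --- come out of the same computation. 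Your coequalizer step is essentially the paper's: cancel the boundary monodromy in the symmetric group, absorb both partitions into a single block by multiplying with a connected component, and match genera by an Euler-characteristic argument, which you make explicit by summing the formula for $g\ast g'$ over the blocks of $F_a\vee F_s=F_a\vee F_{s'}$; that summation correctly extracts the invariant $\sum_{T\in F_s}(g_s(T)-1)$.

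Two small repairs. First, the claim that ``the good components are closed under right multiplication by any element'' is false: multiplying a good component by, say, a trivial cover of larger degree destroys both connectedness and the $d$-cycle condition. You do not actually use this claim --- the absorption property ``every object admits a morphism into a good component'' is what the cofinality criterion requires --- so simply delete it. Second, in the cocone step there is an unresolved tension between wanting $g_0$ large (so that $g_s([D])\geq 0$) and wanting $D>2g_0-1$ (so that $c$ is good). It does work out, but one should say why: stabilizing $a$ to degree $D$ adds $D-d$ singleton blocks to $F_a$, each contributing $-1$ to $\sum_{T}(g_a(T)-1)$, so with $g_s\equiv 0$ the genus of $a\ast s$ equals $d-|F_a|+\sum_{T}g_a(T)$, which is \emph{independent} of $D$. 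One may therefore fix $g_s=0$, read off the resulting genus $g_0$, and only then choose $D>2g_0-1$ and a $D$-cycle $\tau$. With these two adjustments your proof is complete.
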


\begin{proof}Let $S$ be a monoid. That $ES$ is filtered is equivalent to the following two conditions: for any $s,t\in S$ there are $u,v \in S$ such that $su=tv$; for any $r,s,t \in S$ with $rs=rt$, there is $u \in S$ such that $su=tu$. When this holds, a subset $A \subset S$ defines a filtered cofinal subcategory of $ES$ if for all $s \in S$ there is $u \in S$ such that $su\in A$.

Now take $S=\pi_0 \Bra_{\square \rel \squareopen}$. Let also $S^{(d)} = \pi_0 \Bra^d_{\square \rel \squareopen}$, so that $S$ is the increasing union of the submonoids $S^{(d)}$.

The first condition for $S$ to be filtered is clear from the previous lemma. For the second condition, we suppose that $rs=rt$ and choose $d$ so that $s,t \in S^{(d)}$. Choose any $u \in S^{(d)}$ for which the associated set-partition is the trivial partition with a single block (that is, a component parametrizing connected covers). Then $su=tu$. Indeed, the boundary monodromies must coincide since $rs=rt$ implies that $s$ and $t$ have the same boundary monodromy, and the connected components coincide since $u$ corresponds to a connected cover. That the genus of the two covers coincide follows by Euler characteristic considerations from the equation $rsu=rtu$. %Finally, the cofinality condition for the two subcategories is easy.
%
%Take $x \in S$, and assume that $x \in \pi_0 \Bra_{\square \rel \squareopen}^d$ for some $d$. %
%
%It suffices to check that the good components are cofinal in $E\pi_0 \Bra^d_{\square\rel\squareopen}$ for all $d$. 
%

Let us now check the cofinality condition for the subcategory of good components. Pick $s \in S^{(d)}$. We can choose $v\in S^{(d)}$ so that the boundary monodromy of $sv$ is a $d$-cycle, since $\fS_d$ is a group. Let $g$ be the genus of the connected surface parametrized by $sv$. Suppose that $sv$ is not a good component, i.e. that $d <2g$. Let $w \in S^{(2g)}$ parametrize covers of $\square$ by disjoint unions of disks, with boundary monodromy $(d,d+1, \dots ,2g)$. Then $svw \in S^{(2g)}$ parametrizes degree $2g$ connected covers of $\square$ of genus $g$, with boundary monodromy a $2g$-cycle, so that $svw$ is a good component. The cofinality of $E\pi_0 \Bra_{\square \rel \partial}$ is simpler, using only that $\fS_\infty $ is a group.
%Let $u \in S$ be a connected component corresponding to a branched double cover of the disk with a single branch point. For $x \in S$, $xu^n$ parametrizes surfaces with arbitrarily negative Euler characteristic as $n$ grows. Multiplying once more to the right we can arrange for the boundary monodromy to be any element of the symmetric group.
\end{proof}

\begin{prop}$H_\bullet(\Omega_0 B \Bra_{\square \rel \partial};\Z) \cong \varinjlim_g H_\bullet(M_g^1;\Z).$\label{prop:Mg to delooping}
    
\end{prop}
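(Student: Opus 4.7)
The plan is to apply the group-completion theorem directly to the sub-$E_1$-algebra $\Bra_{\square \rel \partial} \subseteq \Bra_{\square \rel \squareopen}$, and then transfer the resulting colimit to one over good components by two applications of the cofinality statement of the preceding lemma.

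To verify the hypotheses of the group-completion theorem for $\Bra_{\square \rel \partial}$: filteredness of $E\pi_0 \Bra_{\square \rel \partial}$ is part of the preceding lemma, and the Ore conditions hold for the trivial reason that $\pi_0 \Bra_{\square \rel \partial}$ acts commutatively on $H_\bullet(\Bra_{\square \rel \partial};\Z)$. Indeed, the homotopy $xy \simeq yx^{\varpi(y)}$ established earlier restricts to $\Bra_{\square \rel \partial}$, because the boundary monodromy of the interpolating cover is constant at $\pi_x\pi_y$, and then specializes to $xy \simeq yx$ since $\varpi(y)=1$. Group-completion therefore yields
$$H_\bullet(\Omega_0 B \Bra_{\square \rel \partial}; \Z) \cong \varinjlim_{s \in E\pi_0 \Bra_{\square \rel \partial}} H_\bullet\bigl((\Bra_{\square \rel \partial})_s; \Z\bigr).$$

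Next I would observe that the forgetful map $\Bra_{\square \rel \partial} \to \Bra_{\square \rel \squareopen}$ identifies its source with the union of those components of $\Bra_{\square \rel \squareopen}$ on which the boundary monodromy is trivial. Indeed, a trivialization along $\squareopen$ determines the identification of the fiber with $\{1,\dots,d\}$ at the two corners where $\squareopen$ and $\squaretop$ meet, and any extension to a trivialization along $\squaretop$ is then uniquely determined by parallel transport, existing precisely when the boundary monodromy is trivial. Hence $(\Bra_{\square \rel \partial})_s$ coincides with $(\Bra_{\square \rel \squareopen})_s$ for every $s \in \pi_0 \Bra_{\square \rel \partial}$.

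It remains to apply cofinality twice. Since $E\pi_0 \Bra_{\square \rel \partial}$ is cofinal in $E\pi_0 \Bra_{\square \rel \squareopen}$, the colimit above equals $\varinjlim_{s \in E\pi_0 \Bra_{\square \rel \squareopen}} H_\bullet\bigl((\Bra_{\square \rel \squareopen})_s; \Z\bigr)$. By cofinality of the subcategory of good components, this in turn equals the colimit over good components; each good component of genus $g$ is homotopy equivalent to $M_g^1$, and by \cref{prop: good components} the structure maps are Harer stabilization, so we arrive at $\varinjlim_g H_\bullet(M_g^1;\Z)$. The only non-trivial point is the identification of $\Bra_{\square \rel \partial}$ with a union of components of $\Bra_{\square \rel \squareopen}$; the rest of the argument is a routine assembly of group-completion with the two cofinality statements.
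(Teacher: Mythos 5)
Your proposal is correct and takes essentially the same route as the paper's proof: both arguments consist of two applications of the group-completion theorem (to $\Bra_{\square \rel \partial}$ and to $\Bra_{\square \rel \squareopen}$), the two cofinality statements of the preceding lemma, and \cref{prop: good components}, merely traversed in the opposite order. The extra details you supply --- the Ore condition for the submonoid via triviality of $\varpi$ on $\Bra_{\square \rel \partial}$, and the identification of $\Bra_{\square \rel \partial}$ with the union of components of $\Bra_{\square \rel \squareopen}$ with trivial boundary monodromy --- are points the paper leaves implicit, and your verifications of them are sound.
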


\begin{proof}
    By the group-completion theorem, $H_\bullet(\Omega_0 B \Bra_{\square \rel \squareopen};\Z) \cong \varinjlim H_\bullet(X_s;\Z),$ with $X_s$ ranging over all connected components of $\Bra_{\square \rel \squareopen}$. By cofinality, we may instead take the colimit over good components, which by \cref{prop: good components} gives the colimit $\varinjlim_g H_\bullet(M_g^1;\Z)$, i.e.\ the stable homology of the mapping class group. On the other hand, we may, again by cofinality, also take the colimit over the components of the submonoid $\Bra_{\square \rel \partial}$ to get the same answer. But by another application of the group-completion theorem, this is $H_\bullet(\Omega_0 B \Bra_{\square \rel \partial};\Z)$. 
\end{proof}

\begin{prop}$H_\bullet(\Omega_0 B \Bra_{\square \rel \partial};\Z) \cong \varinjlim_d H_\bullet(\Omega_0 B \Bra^d_{\square \rel \partial};\Z)$.\label{prop:infinite d to finite d}
    
\end{prop}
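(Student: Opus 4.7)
My plan is to apply the group-completion theorem to $\Bra_{\square \rel \partial}^d$ at each fixed $d$, and then commute the resulting filtered colimit over $d$ with the Quillen-type colimit in the group-completion formula. This will match the statement for $\Bra_{\square \rel \partial}$ proved in the previous proposition.

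First, I would verify the hypotheses of the group-completion theorem for the $E_1$-algebra $\Bra_{\square \rel \partial}^d$ for each fixed $d$. The rotation argument from the earlier lemma (showing that $(x,y) \mapsto xy$ and $(x,y)\mapsto y\cdot x^{\varpi(y)}$ are homotopic) takes place entirely inside a single square and preserves the degree of the cover, so it already yields the Ore conditions inside $\Bra_{\square \rel \partial}^d$. The filteredness of $E\pi_0 \Bra_{\square \rel \partial}^d$ can be established by the same arguments as in the previous lemma, but carried out in fixed degree: the first filteredness condition follows from the same rotation, and the coequalizing element $u$ for the second condition may be chosen to correspond to a connected cover of degree $d$, hence remains in $\Bra_{\square \rel \partial}^d$. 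The group-completion theorem then gives
\[ H_\bullet(\Omega_0 B \Bra_{\square \rel \partial}^d;\Z) \cong \varinjlim_{s \in E\pi_0 \Bra_{\square \rel \partial}^d} H_\bullet((\Bra_{\square \rel \partial}^d)_s;\Z). \]

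Second, the stabilization maps $\Bra_{\square \rel \partial}^d \hookrightarrow \Bra_{\square \rel \partial}^{d+1}$ are $E_1$-algebra maps that identify each connected component of the source homeomorphically with a connected component of the target, and their colimit is $\Bra_{\square \rel \partial}$. Hence the monoid inclusions $\pi_0 \Bra_{\square \rel \partial}^d \hookrightarrow \pi_0 \Bra_{\square \rel \partial}^{d+1}$ have colimit $\pi_0 \Bra_{\square \rel \partial}$, and the homology of each component stabilizes after finitely many steps. Commuting the two filtered colimits, I would obtain
\[ \varinjlim_d \varinjlim_{s \in E\pi_0 \Bra_{\square \rel \partial}^d} H_\bullet((\Bra_{\square \rel \partial}^d)_s;\Z) \cong \varinjlim_{s \in E\pi_0 \Bra_{\square \rel \partial}} H_\bullet((\Bra_{\square \rel \partial})_s;\Z). \]
By the group-completion theorem applied to $\Bra_{\square \rel \partial}$, as already used in the proof of the previous proposition, the right-hand side equals $H_\bullet(\Omega_0 B \Bra_{\square \rel \partial};\Z)$, and the claim follows.

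The main obstacle I anticipate is verifying the group-completion hypotheses at fixed degree $d$; in particular, one must check that the coequalizing elements witnessing filteredness can be constructed without increasing the degree. This is essentially the content of the previous lemma, but some care is required to confirm degree-preservation, since at first glance the earlier filteredness argument appears to allow one to enlarge $d$ freely. Once that verification is in place, the remainder of the proof is a routine manipulation of filtered colimits.
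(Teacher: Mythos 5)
Your proposal is correct, but it takes a genuinely different route from the paper. The paper does not apply the group-completion theorem at fixed degree $d$ at all: it observes that $\Bra_{\square \rel \partial} = \varinjlim_d \Bra^d_{\square \rel \partial}$ is a homotopy colimit (the stabilization maps are cofibrations), that this colimit is created in the $\infty$-category of $E_1$-monoids, and that $\Omega B$, being left adjoint to the inclusion of $E_1$-groups into $E_1$-monoids, preserves all colimits; hence $\Omega B \Bra_{\square \rel \partial} \simeq \varinjlim_d \Omega B \Bra^d_{\square \rel \partial}$ already as spaces, and homology commutes with filtered colimits. Your argument instead runs the group-completion theorem separately for each $d$ and interchanges the two filtered colimits. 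This does work, and the obstacle you flag is handled correctly: the rotation homotopy takes place in the interior of the square and preserves degree, and on $\Bra^d_{\square \rel \partial}$ the boundary monodromy $\varpi$ is trivial, so the monoid is actually homotopy commutative and the Ore conditions are immediate; moreover the coequalizing element $u$ in the paper's filteredness lemma is already chosen to be a connected cover of the given degree $d$, so no degree increase is needed. One step worth making explicit in your colimit interchange is that $E\pi_0 \Bra_{\square \rel \partial} \cong \varinjlim_d E\pi_0 \Bra^d_{\square \rel \partial}$ as filtered categories --- every relation $us=v$ witnessing a morphism in the colimit monoid already holds in some $\pi_0 \Bra^{d'}_{\square \rel \partial}$ --- which is what justifies collapsing the double colimit to a single one. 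The paper's route is shorter and avoids re-verifying the group-completion hypotheses degreewise; yours is more elementary and additionally identifies $H_\bullet(\Omega_0 B \Bra^d_{\square \rel \partial};\Z)$ with a colimit of homologies of components at each fixed $d$.
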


\begin{proof}Recall that we defined $\Bra_{\square \rel \partial} := \varinjlim \Bra_{\square \rel \partial}^d$. This is in fact a homotopy colimit, i.e.\ a colimit in the $\infty$-category of spaces, since the maps $\Bra^d_{\square\rel\partial} \hookrightarrow \Bra^{d+1}_{\square\rel\partial}$ are obviously cofibrations. Moreover, it is also a colimit in the $\infty$-category of $E_1$-algebras, since the forgetful functor from $E_1$-algebras to spaces creates filtered colimits \cite[Proposition~3.2.3.1]{luriehigheralgebra}. 

    Furthermore, the group-completion functor $\Omega B$ is left adjoint to the inclusion of the $\infty$-category of group-like $E_1$-algebras into the $\infty$-category of $E_1$-algebras (see \cref{section about e1-algebras}), so it commutes with all colimits. In particular 
$\Omega B \Bra_{\square\rel\partial} \simeq \varinjlim \Omega B \Bra_{\square \rel \partial}^d$. But now taking homology commutes with filtered colimits. \end{proof}

\section{Delooping and scanning}\label{sec:4}

The arguments of this section follow \emph{very} closely those of \cite[Section~5]{BDPW}. Both have as goal to set up a scanning map to construct a delooping of a Hurwitz space. Both the arguments in \cite[Section~5]{BDPW} and in \cite{bianchi3} are in turn based on those of \cite{hatcher-madsenweiss}. 

Let $L$ and $M$ be $1$-manifolds, with $A \subset L$ and $B \subset M$ subsets. We introduce the notation
$$ \Bra^d_{(L \rel A) \times (M \rel B)} := \Bra^d_{(L \times M) \rel (A\times M \cup L \times B)}.$$
We omit `$\mathsf{rel}\, A$' if $A$ is empty, and similarly for $B$. Let $I=[0,1]$.
%Now let $L$ be a connected $1$-manifold, i.e.\  an open, closed, or half-open interval, and choose $A \subset \partial L$. 
The space $\Bra^d_{(L \rel A) \times (I \rel \partial I)}$ is an $E_1$-algebra. The main theorem of this section gives a geometric model of a delooping of this $E_1$-algebra.

\begin{thm}\label{thm:scanning} Let $L$ be an open, half-open, or closed interval, with $A \subseteq \partial L$.
    Then $B \Bra^d_{(L\rel A) \times (I \rel \partial I)}$ is weakly homotopy equivalent to $ \Bra^d_{(L \rel A) \times \R}$. 
\end{thm}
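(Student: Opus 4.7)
Proof proposal.

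The plan is to construct a natural comparison map $\sigma : BX \to Y$, where $X = \Bra^d_{L\rel A \times I \rel \partial I}$ and $Y = \Bra^d_{L \rel A \times \R}$, and to prove it is a weak equivalence via a semi-simplicial resolution of $Y$. This is the classical scanning argument of McDuff--Segal, in the form used in \cite{hatcher-madsenweiss} and \cite[Section~5]{BDPW}.

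\emph{Construction of $\sigma$.} An $n$-simplex of the bar construction $B_\bullet X$ is a pair $((x_1,\dots,x_n); (0=t_0 \leq \dots \leq t_n=1))$ in $X^n \times \Delta^n$. I would send this to the cover of $L \times \R$ obtained by placing $x_i$, after affine rescaling in the $\R$-coordinate, over the strip $L \times [t_{i-1},t_i]$, gluing along the matching trivializations at $L \times \{t_i\}$, and extending by the trivial cover on $L \times ((-\infty,0) \cup (1,\infty))$. The face relations in $B_\bullet X$ translate to merging two adjacent strips via multiplication in the $E_1$-algebra $X$, so $\sigma$ descends to $|B_\bullet X| = BX$; the side trivializations over $A \times \R$ arise from those over $A \times I$ on each $x_i$.

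\emph{Semi-simplicial resolution.} To prove that $\sigma$ is a weak equivalence, I would introduce a semi-simplicial space $Z_\bullet$ whose $n$-simplices are tuples $(y; s_0 < \dots < s_n; \tau_0,\dots,\tau_n)$, where $y \in Y$, each $s_i \in \R$ lies outside $p(\mathrm{Branch}(y))$ (with $p : L \times \R \to \R$ the projection), and $\tau_i$ is a trivialization of $y$ along $L \times \{s_i\}$; face maps forget one pair $(s_i,\tau_i)$. There are two natural maps out of $|Z_\bullet|$: an augmentation $q : |Z_\bullet| \to Y$ that forgets all slice data, and a map $\kappa : |Z_\bullet| \to BX$ that sends an $n$-simplex to the $n$ middle ``bands'' between consecutive slices (each a point of $X$ after rescaling) together with the normalized positions $(s_i) \in \Delta^n$. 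One checks that $\sigma \circ \kappa \simeq q$, reducing the theorem to showing that both $q$ and $\kappa$ are weak equivalences.

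\emph{Main obstacle.} The central step is showing that $q$ is a weak equivalence. I would argue by computing fibres: for a fixed $y \in Y$, the fibre $q^{-1}(y)$ is the realization of the semi-simplicial set of ordered tuples of slices in $\R \setminus p(\mathrm{Branch}(y))$ together with their trivializations. On each connected component of this open complement the set of trivializations is a discrete $\fS_d$-torsor which is locally constant in the slice, and the resulting semi-simplicial set is essentially the nerve of a filtered poset of finite subsets, whose realization is contractible. For $\kappa$, the main point is that the two ``end pieces'' of $y$, between $\pm\infty$ and the outermost chosen slices $s_0$ and $s_n$, contribute a contractible space of choices: compactness of the branch locus lets us push $s_0, s_n$ to infinity, forcing the ends to be trivial covers, and the resulting contractibility of the ``extensions at infinity'' makes $\kappa$ a weak equivalence. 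The hypothesis that $L$ is an interval with $A \subseteq \partial L$ enters here to ensure that side-boundary trivializations on $A \times \R$ behave discretely and are preserved under the cutting operation; it is this fibrewise contractibility argument, and the careful bookkeeping of trivializations under cutting and gluing, that is the technically delicate part of the proof.
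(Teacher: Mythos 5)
Your overall strategy is sound and genuinely different in implementation from the paper's: where you resolve $Y=\Bra^d_{L\rel A\times \R}$ by an augmented semi-simplicial space of covers equipped with chosen slices and slice trivializations, the paper replaces $BX$ by a Moore-style bar construction $B\mathfrak B$ and shows the scanning map is a weak equivalence by a direct lifting argument: given $f\colon S^q\to Y$, choose a finite open cover of $S^q$ and slice times $t_\lambda$ avoiding the branch loci, and assemble a lift to the bar construction using a subordinate partition of unity. The two arguments are morally the same --- your semi-simplicial resolution is the systematic version of the paper's partition of unity --- and for $\vert A\vert\le 1$ your outline goes through, modulo the standard point that fibrewise contractibility of the augmentation $q$ must be upgraded to a statement about realizations (e.g.\ via a microfibration or local-sections lemma).

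The genuine gap is the case $L$ closed and $A=\partial L$, which you treat on the same footing as the others but which breaks your resolution. A point of $X=\Bra^d_{L\rel A\times I\rel\partial I}$ carries a single trivialization over all of $A\times I\cup L\times\partial I$, so for the cutting map $\kappa$ to land in $BX$ the slice trivializations $\tau_i$ must agree at the corners with the fixed trivializations over both components of $A\times\R$. Such a compatible trivialization of $y$ over $L\times\{s\}$ exists if and only if the comparison permutation between the two end trivializations along $L\times\{s\}$ is the identity, and for many $y\in Y$ --- e.g.\ an unbranched cover whose comparison permutation is a fixed $\pi\neq\mathrm{id}$, which by \cref{lem:connected} lies in the same component as the basepoint --- no such $s$ exists. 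So either the fibres of $q$ are empty over a nonempty open set, or, if you drop the compatibility requirement, $\kappa$ is undefined. This is precisely why the paper isolates $\vert A\vert=2$: it first proves the statement for $\vert A\vert<2$, then deduces the closed case by delooping the extension $\Bra^d_{L\rel A\times I\rel\partial I}\to\Bra^d_{L\rel A_-\times I\rel\partial I}\to\fS_d$ and comparing with the fibration $\Bra^d_{L\rel A\times\R}\to\Bra^d_{L\rel A_-\times\R}\to[\ast/\fS_d]$. Your proof needs this reduction (or an equivalent device) added.
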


Before giving the proof of \cref{thm:scanning} it will be convenient to set up some notation. For a topological monoid $M$, recall that the standard model for the bar construction $BM$ is as an explicit quotient of $\coprod_{p \geq 0} \Delta^p \times M^p$ \cite[\S5.2.4]{BDPW}. We write an element of the bar construction as a pair $((w_0,\ldots,w_p),(m_1,\ldots,m_p))$ with $w_i \in [0,1]$, $\sum w_i=1$, and $m_i \in M$.

We replace $\Bra^d_{(L \rel A) \times (I \rel \partial I)}$ with a weakly equivalent strictly associative $E_1$-algebra. Fix $\epsilon > 0$, and consider the space which parametrizes pairs of $s \in [0,\infty)$, and a point of $$\Bra^d_{(L \rel A) \times ([0,s] \rel [0,\epsilon) \cup (s-\epsilon,s])}.$$ This space is a unital topological monoid in an evident manner, and we denote this monoid $\mathfrak B$. There is a weak equivalence of $E_1$-algebras $\mathfrak B \simeq \Bra^d_{(L \rel A) \times (I \rel \partial I)}$, in much the same way as the space of Moore loops is homotopy equivalent to the usual loop space. 

Let $E$ denote the open interval $(-\epsilon,\epsilon)$. We can now define a map $\sigma : B \mathfrak B \to \Bra^d_{(L \rel A )\times E}$ by the following procedure: a point $x$ of $B\mathfrak B$ may be identified with a tuple $(w_0,\ldots,w_p) \in \Delta^p$ and a family of branched covers of $L \times [0, s_i]$, for $i=1,\ldots,p$, where $s_i \in [0,\infty)$. We multiply these $p$ branched covers in the monoid $\mathfrak B$; explicitly, this means that we set $t_i = s_1+\dots+s_{i}$, for $i=0,\dots,p$, translate the interval $[0,s_i]$ to $[t_{i-1},t_{i}]$, and glue together the $p$ branched covers to a branched cover of $L \times [t_0,t_p]$. After this, we let $t_\star = \sum_{i=0}^p w_i t_i$, and restrict the glued cover to a branched cover of the open subset $L \times (t_\star - \epsilon,t_\star + \epsilon) \cong L\times E$. The latter branched cover is $\sigma(x)$. The proof of \cref{thm:scanning} proceeds by showing that $\sigma$ is a weak equivalence.

%The \emph{scanning map} is a natural map of topological monoids $\mathfrak B \to \Omega' \Bra^d_{(L \rel A) \times E}$, where $\Omega'$ denotes the Moore loop space, and $E$ is the open interval $(-\epsilon,\epsilon)$. It takes a branched cover of $L \times [0,t]$ to the Moore loop of length $t$ in $\Bra^d_{(L \rel A) \times (-\epsilon,\epsilon)}$, whose value at time $t_0$ is the part of the branched cover which lies over $L \times (t_0-\epsilon,t_0+\epsilon)$. 

%\begin{thm}Let $L$ be a connected $1$-manifold, $A \subseteq \partial L$.
%    The map $\mathfrak B \to \Omega' \Bra^d_{(L \rel A) \times E}$ is a group-completion. Equivalently, $B \mathfrak B \to \Bra^d_{(L \rel A) \times E}$ is a weak equivalence.
%\end{thm}

\begin{proof}[Proof of \cref{thm:scanning}.]
    We consider first the case $\vert A\vert < 2$.%, i.e. $A \subsetneq \partial L$.

We first show surjectivity on homotopy groups. Let $q$ be a nonnegative integer, and choose a based map $f: S^q \to \Bra^d_{(L \rel A) \times E}$. We want to find a lift $g : S^q \to B\mathfrak B$ such that $f$ is homotopic to $\sigma \circ g$. 
	
	Choose (using compactness) a finite open cover $S^q = \bigcup_{\lambda \in \Lambda} U_\lambda$, such that there is a positive real $r>0$ and for each $\lambda$ an element $t_\lambda \in E$, with the following property:
	\emph{for any $x \in U_\lambda$, the branch locus of $f(x)$ is disjoint from $L \times (t_\lambda-r,t_\lambda+r)$.} Choose also a partition of unity $w_\lambda$ subordinate to the cover. 
	
	Consider $x \in U_{\lambda_0} \cap \ldots \cap U_{\lambda_k}$ (and assume it lies in no other $U_\lambda$). We assume that 
	$$ t_{\lambda_0} < \ldots < t_{\lambda_k}.$$
	Now take the corresponding branched covers of $L \times [t_{\lambda_0},t_{\lambda_1}]$, \ldots, $L \times [t_{\lambda_{k-1}},t_{\lambda_k}]$, respectively, and stretch them all in the $t$-coordinate by a factor $\tfrac{\epsilon}{r}$. Choosing trivializations of the branched covers along the endpoints of the intervals, we get a $k$-tuple of elements of our monoid $\mathfrak B$. (If $A = \varnothing$ such a trivialization can be chosen arbitrarily. If $\vert A \vert = 1$ there is a unique choice of trivialization which agrees with the fixed trivialization on $A$. If $\vert A \vert = 2$,  it may be that no compatible trivialization exists, which is why this case is treated separately.)  We define $g(x) \in B\mathfrak B$ to be the element defined by this $k$-tuple of elements of $\mathfrak B$, together with $(w_{\lambda_0}(x),w_{\lambda_1}(x),\ldots,w_{\lambda_k}(x)) \in \Delta^k$. 
	
	One checks that this is compatible with the identifications in the bar construction, and glues together well along overlaps for our open cover. To ensure that $g$ is a based map, we may insist that the basepoint of $S^q$ lies only in one of the sets $U_\lambda$ (if this is not the case, just delete the basepoint from all but one of the open sets in the cover).
	
	The branched cover given by $\sigma(g(x))$ is a ``zoomed in'' version of $f(x)$. More precisely, $\sigma(g(x))$ is the restriction of the branched cover $f(x)$ to the product of $L$ with a small interval of radius $r$, centered at the point $t_\ast(x) = \sum w_\lambda(x) t_\lambda$, which is then rescaled to be of radius $\epsilon$. In particular, $\sigma \circ g$ is homotopic to $f$ by continuously expanding the interval $(t_\ast(x)-r,t_\ast(x)+r)$ to fill up all of $E = (-\epsilon,\epsilon)$.
	
For injectivity on homotopy groups, consider instead a map $g : S^q \to B\mathfrak B$ and a nullhomotopy $f: \mathbb D^{q+1} \to \Bra^d_{(L \rel A) \times E}$ of $\sigma \circ g$. The same construction as above allows us to lift $f$ up to homotopy to a map $f' : \mathbb D^{q+1} \to B\mathfrak B$. We should construct a homotopy between $g$ and $f' \vert_{S^q}$. Consider an element $x \in S^q$, and let us suppose that $x \in U_{\lambda_1} \cap \ldots \cap U_{\lambda_k}$ (and $x$ lies in no other $U_\lambda$). Now $g(x)$ consists of a $p$-tuple of elements of $\mathfrak B$ given by branched covers of $L \times [t_0,t_1]$,\ldots, $L \times [t_{p-1},t_p]$, and a weight $w \in \Delta^p$. First, we continuously stretch all rectangles $L \times [t_{i-1},t_i]$ in the $t$-coordinate by a factor $\tfrac{\epsilon}{r}$. After stretching, they can be nontrivially written as a product in $\mathfrak B$, with the time-parameters $t_{\lambda_j}$, $j=1,\ldots,k$, providing the breakpoints where we can split up the rectangles. This gives us instead an equivalent point of the bar construction, given by $(p+k)$-tuple of elements of $\mathfrak B$ and a weight vector in $\Delta^{p+k}$, with $k$ zeroes inserted in the weight vector. Then continuously change the weights from the given weight vector $w$ to the one specified by the partition of unity $w_\lambda$, lowering the ``old'' weights to zero and raising the ``new'' weights from zero to their correct value $w_{\lambda_j}(x)$, $j=1,\ldots,k$. This finishes the proof if $\vert A \vert < 2$.

    Now assume that $L$ is a closed interval and $A = \partial L$. Write $A=A_- \cup A_+$ for the two endpoints of $L$. We have a fibration sequence of $E_1$-algebras
    $$ \Bra^d_{(L \rel A) \times( I \rel \partial I)} \to \Bra^d_{(L \rel A_-) \times (I \rel \partial I)} \to \mathfrak S_d$$
    where the second map records the monodromy along $A_+ \times I$. It deloops to a fibration sequence, fitting in a commutative diagram 
    \[\begin{tikzcd}
        B \Bra^d_{(L \rel A )\times (I \rel \partial I)} \arrow[r]\arrow[d]& B\arrow[r]\arrow[d] \Bra^d_{(L \rel A_-) \times (I \rel \partial I)} & \arrow{d}{\simeq} B \mathfrak S_d \\
        \Bra^d_{(L \rel A) \times E} \arrow{r}& \Bra^d_{(L \rel A_-) \times E}\arrow[r] & \hspace{0pt}[\ast/\mathfrak S_d]
    \end{tikzcd}\]
    where the bottom fibration sequence arises from the fact that $\Bra^d_{(L \rel A_-) \times E} \cong [\Bra^d_{(L \rel A) \times E}/\fS_d]$, where $\fS_d$ acts by changing the trivialization along $A_+ \times E$. 
    %where the bottom fibration sequence is explained in the following \cref{lem: fibrationseq} (and we observe that choosing a trivialization at some point of $A_+ \times E$ is equivalent to choosing a trivialization along all of $A_+ \times E$). 
    Since the middle vertical map is a weak equivalence by what we have already proved, so is the left vertical map.
\end{proof}

%\begin{lem}\label{lem: fibrationseq} Let $\Sigma$ be a surface, $P \in \partial \Sigma$ a point. There is a fibration sequence
%\[\Bra^d_{\Sigma \rel y} \to \Bra^d_\Sigma \to [\ast/\mathfrak S_d],\]
%where we think of $[\ast/\mathfrak S_d]$ as the stack parametrizing degree $d$ covering spaces of $P$; the second map takes a branched cover of $\Sigma$ and restricts it to $P$. 
%\end{lem}

%\begin{proof}
%    Consider a lifting problem
%    \[\begin{tikzcd}
%        S \arrow[d]\arrow[r] & \arrow[d] \Bra^d_\Sigma \\
%        S \times I \arrow[ur,dotted]\arrow[r]& \hspace{0pt} [\ast/\mathfrak S_d]
%    \end{tikzcd}\]
%    Every map $S \times I \to [\ast/\mathfrak S_d]$ factors through the projection onto the first factor, $S \times I \to S$. So the lifting problem can be trivially solved.
%\end{proof}

\begin{lem}\label{lem:connected}
    The space $\Bra^d_{(I \rel \partial I) \times \R}$ is connected.
\end{lem}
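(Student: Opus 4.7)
The plan is to invoke the scanning theorem (\cref{thm:scanning}) with $L = I$ and $A = \partial I$; this falls into the ``$L$ closed, $A = \partial L$'' case and yields a weak homotopy equivalence
\[ \Bra^d_{I \rel \partial I \times \R} \simeq B\, \Bra^d_{I \rel \partial I \times I \rel \partial I}. \]
It then suffices to show that the right-hand side is path-connected. But the bar construction of any topological $E_1$-algebra (concretely, the Moore monoid $\mathfrak B$ built in the proof of \cref{thm:scanning}) is automatically path-connected: in the standard simplicial model, $BM$ has a unique $0$-cell coming from $\Delta^0$, and each $m \in M$ gives a $1$-simplex whose two faces both collapse to that basepoint. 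Since a weak equivalence induces a bijection on $\pi_0$, we are done.

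A geometric alternative, which I would have in reserve if a self-contained argument is preferred, runs as follows. Given any cover $(z,\rho)$, I would first translate all branch points upward to $y=+\infty$; by non-compactness of the strip and the shape of the basic opens in \cref{sec:def of Bra}, the branch points eventually leave every compact $K$, so this is a continuous path whose limit is an unbranched cover, classified only by a ``transport permutation'' $\pi \in \fS_d$ comparing the two boundary trivialisations via horizontal transport. To connect any such trivial cover to the standard identity-transport one, I would then introduce a single branch point of monodromy $\pi^{-1}$ from $y=+\infty$ and slide it down past $y=-\infty$; this is the time-reverse of another annihilation-at-infinity move and alters the transport by $\pi^{-1}$, landing at the basepoint.

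The only substantive verification in either approach is continuity of creation and annihilation of branch points at infinity. This is immediate from the compactness of $K$ in a basic open $B(K,\mathfrak U, E)$: once a branch point has been pushed outside $K$ it no longer affects membership in that open, so the translation path is continuous and the limit belongs to $\Bra^d$. The scanning-based proof hides this subtlety inside the already-proven \cref{thm:scanning}, which is why I expect it to be the shortest route.
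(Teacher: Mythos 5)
Your primary argument is correct and takes a genuinely different route from the paper's. You deduce connectivity formally from \cref{thm:scanning}: with $L=I$ and $A=\partial I$ the theorem identifies $\Bra^d_{I \rel \partial I \times \R}$ up to weak equivalence with the classifying space $B\,\Bra^d_{I \rel \partial I \times I \rel \partial I}$, and any bar construction is path-connected because its simplicial degree-zero part is a single point. This is not circular: the proof of \cref{thm:scanning} nowhere uses \cref{lem:connected}, which is only invoked afterwards (in \cref{thm:delooping}) to see that the $E_1$-algebra $\Bra^d_{I \rel \partial I \times \R}$ is group-like. The paper instead proves the lemma by exactly the geometric argument you hold in reserve: push all branch points off to infinity in the $\R$-direction, arriving at an unbranched cover whose only remaining invariant is the transport permutation comparing the two boundary trivializations, and then trivialize that permutation by creating a branch point with the appropriate local monodromy at one end of the strip and sliding it across to vanish at the other end. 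Your backup matches this, including the one genuine verification — that a branch point outside the compact set $K$ of a basic open $B(K,\mathfrak U,E)$ has no effect on membership in that open, so annihilation at infinity is continuous. The trade-off is as you suspect: the formal route is shorter but buries the geometry inside the $|A|=2$ case of \cref{thm:scanning} (itself proved by a delooped fibration-sequence argument), whereas the direct argument is self-contained and displays the creation/annihilation moves that recur elsewhere in the paper.
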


\begin{proof}
   We think of $\Bra^d_{(I \rel \partial I) \times \R}$ as parametrizing branched covers of the square, with a trivialization along the bottom and top sides, and where branch points can be created and annihilated along the left and right sides. Pick any point $x$ in this space. Continuously pushing all branch points off the right-hand side of the square defines a continuous path from $x$ to a trivial cover of the square. But a trivial cover of the square does not necessarily coincide with the basepoint of  $\Bra^d_{(I \rel \partial I) \times \R}$, since the trivializations of the trivial cover along the top and bottom sides may be incompatible; that is, there may be nontrivial monodromy along a line connecting the top and bottom sides. To define a continuous path to the basepoint of  $\Bra^d_{(I \rel \partial I) \times \R}$, we may further introduce a branch point along the left-hand side of the square, slide it horizontally across to the right-hand side, where it again vanishes. Under this process the monodromy along a line connecting the top and bottom sides changes according to the monodromy of a loop around the marking. Consequently we may in this manner connect any point of $\Bra^d_{(I \rel \partial I) \times \R}$ to the basepoint. 
\end{proof}

\begin{thm}\label{thm:delooping}
    $ B \Bra^d_{\square \rel \partial} = B \Bra^d_{(I \rel \partial I) \times (I \rel \partial I)} \simeq \Omega \Bra^d_{\R^2}$.
\end{thm}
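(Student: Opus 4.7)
The plan is to apply the scanning theorem (\cref{thm:scanning}) twice, using a loop-space recognition argument in between to convert one of the two bar constructions into a based loop space.

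First, I would apply \cref{thm:scanning} with $L = I$ and $A = \partial I$. Unpacking the notation, this gives a weak equivalence
$$B \Bra^d_{\square \rel \partial} = B \Bra^d_{I \rel \partial I \times I \rel \partial I} \simeq \Bra^d_{I \rel \partial I \times \R},$$
which performs one of the two deloopings for free.

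Next, I would identify $\Bra^d_{I \rel \partial I \times \R}$ with $\Bra^d_{\R \times I \rel \partial I}$ by swapping the two coordinate directions. The latter is an $E_1$-algebra: multiplication is given by stacking in the compact $I$-direction, concatenating along the trivializations on $\R \times \partial I$. By \cref{lem:connected}, this space is connected, so its $\pi_0$ is a (trivial) group and the $E_1$-algebra is group-like. The standard loop-space recognition theorem (the group-completion theorem applied to a group-like $E_1$-algebra, in the style of McDuff--Segal) then gives a weak equivalence
$$\Bra^d_{\R \times I \rel \partial I} \simeq \Omega B \Bra^d_{\R \times I \rel \partial I}.$$

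Finally I would apply \cref{thm:scanning} a second time, this time with $L = \R$ (so $A = \partial L = \emptyset$), to the $E_1$-algebra $\Bra^d_{\R \times I \rel \partial I}$ with this same $I$-stacking structure. The conclusion is
$$B \Bra^d_{\R \times I \rel \partial I} \simeq \Bra^d_{\R \times \R} = \Bra^d_{\R^2}.$$
Concatenating the three equivalences yields $B \Bra^d_{\square \rel \partial} \simeq \Omega \Bra^d_{\R^2}$.

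I expect the main obstacle to lie in the middle step: one must check that the $E_1$-structure on $\Bra^d_{\R \times I \rel \partial I}$ for which \cref{lem:connected} delivers group-likeness is the \emph{same} structure for which the second application of \cref{thm:scanning} produces $\Bra^d_{\R^2}$ as a classifying space. Once this bookkeeping is in order, both applications of scanning are direct appeals to an already-proved theorem, and the McDuff--Segal recognition step is purely formal given connectedness.
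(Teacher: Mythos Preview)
Your proposal is correct and follows essentially the same route as the paper's proof: apply \cref{thm:scanning} once with $L=I$, $A=\partial I$; use connectedness from \cref{lem:connected} to replace the resulting space by $\Omega B$ of itself; then apply \cref{thm:scanning} again with $L=\R$, $A=\varnothing$. The paper is simply terser, suppressing the coordinate swap you make explicit; your bookkeeping concern about the $E_1$-structures is legitimate but easily resolved, since in both the connectedness argument and the second scanning step the multiplication is stacking in the remaining compact $I$-direction.
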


\begin{proof}
We have $B \Bra^d_{(I \rel \partial I) \times (I \rel \partial I)} \simeq \Bra^d_{(I \rel \partial I) \times \R}$ by \cref{thm:scanning}. The right-hand side is again an $E_1$-algebra, and since it is moreover connected according to \cref{lem:connected} we have $\Bra^d_{(I \rel \partial I) \times \R} \simeq \Omega B \Bra^d_{(I \rel \partial I) \times \R} \simeq \Omega \Bra^d_{\R^2}$, where we used \cref{thm:scanning} again. The result follows. 
\end{proof}

\section{Local branched covers}\label{sec:5}

	\subsection{Preliminaries on CW-orbispaces}\label{subsec: CW}In this section, we complete the proof of the Mumford conjecture, by studying the topology of the delooping $\Bra^d_{\R^2}$ appearing in \cref{thm:delooping} by means of an explicit CW-decomposition. Since we are concerned with rational (co)homology the reader may in principle ignore stacky subtleties and work throughout with coarse spaces throughout this section of the paper; see \cref{rem:coarse-space-rational-equivalence}. However, the CW-decomposition will appear more naturally on the level of the stack. Let us first describe what a CW-decomposition even means in this context.
	
	\begin{defn}\label{CW def} A \emph{CW-orbispace} is a separated topological stack $X$, equipped with a family of representable maps $\{f_\alpha : [\mathbb D^{n_\alpha}/G_\alpha] \to X\}_{\alpha \in A}$, where for each $\alpha \in A$ we denote by $\mathbb D^{n_\alpha}$ the closed unit ball in $\mathbf R^{n_\alpha}$, and $G_\alpha \to \mathrm{O}(n_\alpha)$ is a linear representation of a finite group, satisfying the following axioms:
		\begin{enumerate}
			\item The restriction of $f_\alpha$ to the interior of $[\mathbb D^{n_\alpha}/G_\alpha]$ is an isomorphism onto its image. We denote the image by $e_\alpha$, and call $e_\alpha$ an \emph{open cell} of \emph{dimension} $n_\alpha$ in $X$. 
			\item The family of cells $\{e_\alpha\}_{\alpha \in A}$ are pairwise disjoint, and their set-theoretic union is all of $X$. 
			\item \emph{Closure-finite:} The image of $[\partial \mathbb D^{n_\alpha}/G_\alpha]$ under $f_\alpha$ is contained in a finite union of cells of dimension strictly less than $n_\alpha$. 
			\item \emph{Weak topology:} A substack $Z \subset X$ is closed if and only if $f_\alpha^{-1}(Z)$ is closed in $[\mathbb D^{n_\alpha}/G_\alpha]$ for all $\alpha$. 
		\end{enumerate}
		\end{defn}
	
	\begin{rem}
		If $G_\alpha$ is trivial for all $\alpha$, then $X$ must be a topological space, and the above definition reduces to the usual notion of a CW-complex. 
	\end{rem}

\begin{rem}
	Conditions (2) and (3) in the definition of a CW-orbispace can be checked on the level of the coarse space of $X$, since they are conditions on the underlying set. In fact (4) can be too, since closed substacks of $[\mathbb D^{n_\alpha}/G_\alpha]$ are $G_\alpha$-invariant closed subspaces of $\mathbb D^{n_\alpha}$, and a {$G_\alpha$-invariant} subspace of $\mathbb D^{n_\alpha}$ is closed if and only if its image in $\mathbb{D}^{n_\alpha}/G_\alpha$ is closed in the quotient topology. 
\end{rem}
	
	\begin{rem}
		If $X$ has finitely many cells, then axiom (4) can be omitted. 
	\end{rem}

\begin{rem}
	Our definition is nearly equivalent to the one used by Pardon \cite{pardon2}. Pardon requires instead all cells to be of the form $\mathbb D^{n_\alpha} \times [\ast/G_\alpha]$. This leads to the same class of spaces, since every closed cell  $[\mathbb D^{n_\alpha}/G_\alpha]$ itself admits a CW-decomposition in Pardon's sense. It will be very useful for us to allow for more general cells, with nonconstant isotropy groups, since it allows for much more efficient cell decompositions (in the sense of having fewer cells). 
\end{rem}

\begin{defn}A cell $e_\alpha$ in a CW-orbispace as in \cref{CW def} is \emph{orientable} if the map $G_\alpha \to \mathrm{O}(n_\alpha)$ lands in $\mathrm{SO}(n_\alpha)$.\end{defn} 

An increasing filtration of a topological stack $X$ by closed substacks $X^{(p)}$ gives rise to a spectral sequence
$$ E^1_{pq} = H_{p+q}(X^{(p)},X^{(p-1)};\Z) \implies H_{p+q}(X;\Z)$$
in the same manner as the spectral sequence associated to a filtered topological space. When $X$ is an ordinary CW-complex filtered by its skeleta, the resulting spectral sequence has its $E^1$-page concentrated along the row $q=0$, since $$H_\bullet(X^{(p)},X^{(p-1)};\Z) \cong \bigoplus_{\dim e_\alpha = p} H_\bullet^{\mathrm{BM}}(e_\alpha;\Z),$$
where $H_\bullet^{\mathrm{BM}}$ denotes Borel--Moore homology. The $q=0$ row coincides exactly with the usual \emph{cellular chain complex}, and in this way one may show that homology can be computed using cellular chains. 

Similarly, every CW-orbispace comes canonically with a filtration by skeleta. However, one can not as straightforwardly compute the homology from the cellular chains, as the Borel--Moore homology of the open cells $e_\alpha$ can be quite nontrivial --- for example, if $X$ has a single 0-cell $[\ast/G]$, then (co)homology of $X$ is group (co)homology of $G$. However, the situation simplifies if we take rational coefficients:\footnote{Or more generally, coefficients in an abelian group in which the orders of all isotropy groups are invertible.} in this case we have %(since orbifolds satisfy Poincar\'e duality rationally): 
$$H_p^{\mathrm{BM}}(e_\alpha;\Q) \cong H_p(\mathbb D^{n_\alpha},\partial \mathbb D_{n_\alpha};\Q)_{G_\alpha} %\cong 
%H^{n_\alpha - p}(G_\alpha;\Q \otimes \omega) 
\cong \begin{cases}
    \Q & p=n_\alpha \text{ and } e_\alpha \text{ is orientable}, \\ 0 & \text{otherwise.}
\end{cases}$$ 
%Here $\omega$ denotes the orientation sheaf of the cell $\mathbb D^{n_\alpha}$, considered as a $G_\alpha$-module, so that $e_\alpha$ is orientable precisely when $\omega$ is the trivial representation. 
Indeed, $H_p(\mathbb D^{n_\alpha},\partial \mathbb D_{n_\alpha};\Q)$ vanishes for $p\neq n_\alpha$ and is spanned by the fundamental class $\mathfrak o$ when $p=n_\alpha$, and $G_\alpha$ preserves $\mathfrak o$ precisely if $e_\alpha$ is orientable. It follows that we may compute rational (co)homology of $X$ using an exact analogue of the usual cellular chain complex, defined instead by means of an orbispace CW-decomposition of $X$. Explicitly, define $C_p(X;\Q) = \bigoplus_{\dim e_\alpha=p} H_p^{\mathrm{BM}}(e_\alpha;\Q)$  --- or, less canonically, we may declare $C_p(X;\Q)$ to be the $\Q$-vector space with basis the set of $p$-dimensional orientable cells --- and define a differential $C_p(X;\Q) \to C_{p-1}(X;\Q)$ by taking the $E^1$-differential in the spectral sequence. Then the resulting chain complex computes the rational homology of $X$.

\begin{rem}\label{rem:coarse-space-rational-equivalence}
In fact, the rational (co)homology of $X$ coincides with the rational (co)homology of the coarse space $|X|$ of $X$. This can be seen from the Leray--Serre spectral sequence associated to the projection $X \to |X|$, using that all isotropy groups are finite. Moreover, the coarse space of a CW-orbispace is a CW-complex in the usual sense, and for a CW-decomposition in Pardon's sense (with constant isotropy along open cells) the sets of cells of the two decompositions are naturally identified.
\end{rem}

Let $g\colon X\to Y$ be a cellular map of CW-orbispaces, meaning that it carries the $k$-skeleton to the $k$-skeleton for all $k$. Then $g$ induces a map of spectral sequences associated to the respective filtrations, and hence a map $C_\bullet(X;\Q)\to C_\bullet(Y;\Q)$.\footnote{If $g$ is not cellular, one needs to replace it with a cellular approximation, i.e.\ a homotopic cellular map. It seems likely that there exists a version of the cellular approximation theorem for CW-orbispaces, but we do not know a reference. Fortunately, we will not need one, since in our case of interest we will construct a cellular approximation by hand.} Let us describe $C_\bullet(X;\Q)\to C_\bullet(Y;\Q)$ more explicitly. Consider a pair of $p$-cells $e_\alpha \subset X$ and $e_\beta \subset Y$. We have a zigzag 
$$ e_\alpha \leftarrow (e_\alpha \cap g^{-1}(e_\beta)) \to e_\beta $$
with the left map an open embedding and the right map proper. Borel--Moore homology is covariant for proper maps and contravariant for open embeddings, so this zigzag induces a map $H_\bullet^{\mathrm{BM}}(e_\alpha;\Q)\to H_\bullet^{\mathrm{BM}}(e_\beta;\Q)$. If we choose a generator of $H_p^{\mathrm{BM}}(e_\alpha;\Q)$ and $H_p^{\mathrm{BM}}(e_\beta;\Q)$, such a map is simply given by a number, which we call the \emph{degree} of the map (with respect to the cells $\alpha$ and $\beta$). For ordinary CW-complexes this is the usual notion of degree. 

For each cell $f_\alpha : [\mathbb D^{n_\alpha}/G_\alpha] \to X$, let $\widetilde f_\alpha : \mathbb D^{n_\alpha} \to X$ denote the composite $\mathbb D^{n_\alpha} \to [\mathbb D^{n_\alpha}/G_\alpha] \to X$. If we choose an orientation of the disk $\mathbb D^{n_\alpha}$, then there are two distinct natural choices of a generator of $H_{n_\alpha}^{\mathrm{BM}}(e_\alpha;\Q)$: if $\mathfrak o \in H_{n_\alpha}(\mathbb D^{n_\alpha},\partial \mathbb D_{n_\alpha};\Z)$ is the fundamental class of $\mathbb D^{n_\alpha}$, then we may take either 
$(\smash{\widetilde f_\alpha})_\ast (\mathfrak o)$ or $\frac 1 {|G_\alpha|} (\smash{\widetilde f_\alpha})_\ast (\mathfrak o)$ as generator.
The former is sometimes called the ``coarse'' or ``crude'' fundamental class, and the latter the ``orbifold fundamental class''. The former has the advantage of being an integral class, and the latter has the advantage that the quotient map $\mathbb D^{n_\alpha} \to [\mathbb D^{n_\alpha}/G_\alpha]$ has degree $\vert G_\alpha|$, as one would expect by counting the cardinality of fibers. In computing degrees in this section, we will work with the crude fundamental class throughout. This is to make the degrees coincide with those calculated by Bianchi;  the rest of the arguments are insensitive to the precise choice of generators.

\subsection{Cell decomposition of the local moduli space} We now proceed to analyze the moduli space $\Bra_{\R^2}^d$ in more detail. The first step is to construct a deformation retraction of $\Bra^d_{\R^2}$ down to a subset of ``local'' branched covers around a point. It will be more convenient to replace $\R^2$ with the open unit disk $D$. 

\begin{defn}Let $\Bra^{d,\mathsf{loc}}_{D} \subset \Bra^d_{D}$ be the closed substack parametrizing covers all of whose components are disks. 

\end{defn}

Recall from \cref{subsec:points over C} the definition of $N(\pi)$ for $\pi\in\fS_d$.

\begin{lem}\label{lem: characterization of Bra loc}
Suppose $f \in \Bra^d_D$ corresponds to a branched cover with $r$ connected components and branch points $z_1, \dotsc, z_k \in D$. Let $\partial D$ denote a loop near the boundary of $D$, encircling all the branch points. Suppose that $f$ has local monodromy $\sigma_1, \dotsc, \sigma_k \in \fS_d$, for a choice of trivialization over some $y \in \partial D$ and paths from $y$ to $z_i$, such that the boundary monodromy $\sigma_\partial$ equals $\sigma_1 \dotsm \sigma_k$. Then the following are equivalent:
\begin{enumerate}
    \item $f \in \Bra^{d,\mathsf{loc}}_D$.
    \item $N(\sigma_1) + \dotsb + N(\sigma_k) = d - r$.
    \item $N(\sigma_1) + \dotsb + N(\sigma_k) = N(\sigma_\partial)$.
\end{enumerate}
\end{lem}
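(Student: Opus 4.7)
The plan is to apply Riemann--Hurwitz to a compactification of $f$. Let $\overline{D}$ be the closed unit disk and extend $f$ uniquely to a branched cover $\bar f\from \bar E \to \overline{D}$; then $\bar E$ is a compact oriented surface with boundary, its connected components are in bijection with those of $E$, and its boundary circles are in bijection with the cycles of $\sigma_\partial$ (so there are $d - N(\sigma_\partial)$ of them, distributed among the components). Write the $i$-th component $\bar E_i$ as a compact oriented surface of genus $g_i$ with $b_i$ boundary circles, and set $g := \sum_i g_i$. Combining
\[
\chi(\bar E) = \sum_i (2 - 2 g_i - b_i) = 2r - 2g - (d - N(\sigma_\partial))
\]
with the Riemann--Hurwitz identity $\chi(\bar E) = d \cdot \chi(\overline{D}) - \sum_j N(\sigma_j) = d - \sum_j N(\sigma_j)$ yields the master formula
\[
\sum_{j=1}^k N(\sigma_j) + N(\sigma_\partial) = 2(d - r) + 2g.
\]

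Next I would record the two a priori inequalities that always hold: $g \geq 0$, and $N(\sigma_\partial) \leq d - r$; the latter because $b_i \geq 1$ for every $i$ (each component of $E$ is non-compact, being a connected branched cover of the non-compact space $D$), so $\sum_i b_i = d - N(\sigma_\partial) \geq r$. Condition (1) says that every $\bar E_i$ is a closed disk, i.e.\ $g_i = 0$ and $b_i = 1$ for all $i$, which is precisely the statement that both a priori inequalities are equalities. Plugging $g = 0$ and $N(\sigma_\partial) = d - r$ into the master formula then yields both (2) and (3), so $(1) \Rightarrow (2), (3)$ is immediate.

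For the converse directions, substituting (2) into the master formula gives $N(\sigma_\partial) - (d - r) = 2g$: the left-hand side is $\leq 0$ and the right-hand side $\geq 0$, so both vanish, and we recover (1). Substituting (3) instead gives $N(\sigma_\partial) - (d - r) = g$, with the same conclusion. I do not foresee any serious obstacle; once the compactification $\bar f$ is set up, the rest is a brief Euler characteristic bookkeeping. The one point requiring some care is the justification of $b_i \geq 1$, i.e.\ the non-compactness of each component of $E$: this holds because the restriction of $f$ to any component maps it to $D$ as a covering space away from the finitely many branch points, so its image is clopen in $D \setminus \{q_1, \ldots, q_k\}$ and hence equals this non-compact connected space, forcing the component itself to be non-compact.
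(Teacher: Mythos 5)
Your proof is correct. It is the same basic strategy as the paper's --- Riemann--Hurwitz plus Euler characteristic bookkeeping --- but organized differently in two respects. First, the paper reduces immediately to the connected case $r=1$ (using that $N$ is additive over permutations with disjoint support), whereas you keep all components together and absorb $r$ into a single ``master formula'' $\sum_j N(\sigma_j) + N(\sigma_\partial) = 2(d-r) + 2g$. Second, and more substantively, the key inequality needed to handle condition (3) is obtained differently: the paper uses the group-theoretic fact that fewer than $d-1$ transpositions cannot generate a transitive subgroup of $\fS_d$, which gives $\sum_j N(\sigma_j) \ge d-1$ directly, while you instead derive the analogous bound topologically from $g \ge 0$ together with $N(\sigma_\partial) \le d-r$ (each component has at least one boundary circle). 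Your route has the advantage of being uniform --- one identity and two manifestly nonnegative quantities whose vanishing is exactly condition (1) --- and of avoiding any appeal to transitivity; the paper's version is slightly shorter once the reduction to $r=1$ is made. The only point you should make explicit if writing this up is the existence of the compactification $\bar f \colon \bar E \to \overline{D}$: since the finitely many branch points lie in a compact subdisk, the cover over the outer annulus is an honest covering space, a disjoint union of annuli, so the boundary circles can be glued in; this also justifies the bijection between boundary circles of $\bar E$ and cycles of $\sigma_\partial$. This is routine and is implicitly used by the paper as well.
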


\begin{proof}
Since the monodromy action cannot permute points of different connected components, and $N$ is additive for permutations with disjoint support, we may assume $r = 1$.
So suppose $f \from \Sigma \to D$ with $\Sigma$ connected and note that (1) is equivalent to $\Sigma$ being a disk. 
Now the Riemann--Hurwitz formula says $\chi(\Sigma) + N(\sigma_1) + \dotsb + N(\sigma_k) = d$, so (1) $\iff \chi(\Sigma) = 1 \iff$ (2).

Since $D$ is simply connected, the entire monodromy group is generated by $\{\sigma_1, \dots, \sigma_k\}$. Since fewer than $d-1$ transpositions cannot act transitively on $d$ elements, $N(\sigma_1) + \dotsb + N(\sigma_k) \ge d - 1$. 
On the other hand, the boundary components of $\Sigma$ correspond to cycles in $\sigma_\partial$, so $N(\sigma_\partial) \le d - 1$ with equality holding if and only if $\Sigma$ has a single boundary component.
So if (3) holds then $N(\sigma_\partial) \le d - 1 \le N(\sigma_1) + \dotsb + N(\sigma_k)$ implies that (2) holds. 
Conversely, if (1) and (2) hold then both sides of (3) are equal to $d-1$.\end{proof}

\begin{rem}Since $N$ is subadditive and conjugation invariant, condition (3) in \cref{lem: characterization of Bra loc} implies that $N$ is additive on $\{\sigma_i\}$.\end{rem}

\begin{rem}Let $D_t$ be the open disk of radius $t$ around the origin. A consequence of \cref{lem: characterization of Bra loc} is that if $f \colon E \to D$ is a branched cover, and $f^{-1}(D_t)$ is a union of disks for some $0<t<1$, then also $f^{-1}(D_s)$ is a union of disks for all $0<s<t$.
\end{rem}
\begin{prop}
    The inclusion $\Bra^{d,\mathsf{loc}}_{D} \subset \Bra^d_{D}$ is a deformation retract.
\end{prop}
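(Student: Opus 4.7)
The plan is to construct a homotopy $H \from \Bra^d_D \times [0,1] \to \Bra^d_D$ that radially contracts the branch locus of each cover toward the origin. Concretely, given a cover $f \in \Bra^d_D$ with branch locus $S \subset D$ and total boundary monodromy $\sigma_\partial \in \fS_d$, and given $s \in (0,1]$, let $\phi_s(f)$ be the cover of $D$ whose restriction to $D_s := \{|z|<s\}$ is the pullback of $f$ along the homeomorphism $D_s \to D$, $z \mapsto z/s$ (with branch locus $sS \subset D_s$), and whose restriction to the annulus $D \setminus \overline{D_s}$ is the unique unbranched $d$-sheeted cover with monodromy $\sigma_\partial$ around the inner boundary. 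Extend by defining $\phi_0(f)$ to be the cover of $D$ with a single branch point at $0$ and monodromy $\sigma_\partial$. Set $H(f,t) := \phi_{1-t}(f)$; then $H_0 = \mathrm{id}$, and at $t=1$ all branch points of $f$ have collapsed to the origin.

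Three properties need verification. Continuity of $H$ for $t<1$ is clear, and at $t=1$ it follows from the basis of opens $B(K,\mathfrak U, E)$ in \cref{sec:def of Bra}: once $s$ is small enough that $sS$ sits inside a prescribed disk $\mathfrak U$ around $0$, the cover $\phi_s(f)$ over $D \setminus \mathfrak U$ stabilizes to the unbranched extension with monodromy $\sigma_\partial$, independent of $s$. Next, $H_1(f) = \phi_0(f)$ has a single branch point with monodromy $\sigma_\partial$, so condition (3) of \cref{lem: characterization of Bra loc} holds trivially as $N(\sigma_\partial) = N(\sigma_\partial)$, and $H_1$ lands in $\Bra^{d,\mathsf{loc}}_D$. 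Finally, $H$ preserves $\Bra^{d,\mathsf{loc}}_D$: for $s>0$ the pullback along $z \mapsto z/s$ is a homeomorphism on total spaces, so disk components remain disks; and for $s = 0$ the image lies in $\Bra^{d,\mathsf{loc}}_D$ by the previous point.

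Together these three items show that $\iota \from \Bra^{d,\mathsf{loc}}_D \hookrightarrow \Bra^d_D$ admits $H_1$ as a homotopy inverse, with both homotopies realised by $H$ and its restriction to $\Bra^{d,\mathsf{loc}}_D \times [0,1]$, so $\iota$ is a deformation retract. The main obstacle I expect is continuity at the endpoint $t=1$, where $|S|$ distinct branch points must continuously collide into one carrying the product monodromy $\sigma_\partial$; this is precisely what the generous topology of \cref{sec:def of Bra}, designed to allow branch collisions compatible with monodromy, is built to handle. A secondary subtlety is that for $A = \varnothing$ the scaling must be interpreted on the stack $\Bra^d_D$: since radial contraction commutes with the $\fS_d$-action that changes local trivializations, the homotopy descends unambiguously through the gluing construction of \cref{sec:def of Bra}.
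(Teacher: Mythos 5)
Your homotopy runs in the wrong direction, and it breaks exactly where you anticipated. The topology of \cref{sec:def of Bra} does \emph{not} allow arbitrary collisions of branch points: a basic open set $B(K,\mathfrak U,E)$ requires $f^{-1}(\mathfrak U)$ to be a disjoint union of disks, so branch points with local monodromies $\sigma_1,\dots,\sigma_k$ can merge into a single point with monodromy $\sigma_1\cdots\sigma_k$ only when $N(\sigma_1)+\dots+N(\sigma_k)=N(\sigma_1\cdots\sigma_k)$, i.e.\ only when the preimage of a small disk around the collision site is already a union of disks (\cref{lem: characterization of Bra loc}). For $f$ not already in $\Bra^{d,\loc}_D$ this fails for the total collision: any basic neighborhood of $\phi_0(f)$ involves a small disk $\mathfrak U\ni 0$ whose preimage must be a union of disks, whereas for every sufficiently small $s>0$ one has $D_s\subseteq\mathfrak U$ and $\phi_s(f)^{-1}(D_s)$ homeomorphic to the total space of $f$ (the compression $z\mapsto z/s$ is a homeomorphism). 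Hence $\phi_s(f)\not\to\phi_0(f)$, and $H$ is discontinuous at $t=1$ on all of $\Bra^d_D\setminus\Bra^{d,\loc}_D$. Note also that your $H_1$ lands in the subspace of covers with at most one branch point, and the introduction explicitly warns that $\Bra^d_D$ does not retract onto this subspace; a further symptom is that $\phi_0$ factors through the boundary monodromy $\sigma_\partial$, which is not locally constant on $\Bra^d_D$ because branch points may escape through the open boundary of $D$.

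The paper's proof does the opposite: it expands rather than contracts. One sets $\xi(f)=\sup\{t\le 1: f^{-1}(D_t)\text{ is a disjoint union of disks}\}$, which is positive (a small disk contains no branch points, so its preimage is $d$ disks) and continuous, and retracts $f$ to the rescaling of $f^{-1}(D_{\xi(f)})\to D_{\xi(f)}$. The branch points obstructing locality are thereby pushed out through the open boundary of $D$ and vanish --- a degeneration the topology \emph{does} permit --- and at every stage one retains only a union-of-disks piece, so the genus obstruction that kills your homotopy never arises. Any repair of your argument must discard the non-local part of the cover rather than compress it toward the origin.
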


\begin{proof}Define $\xi : \Bra^d_D \to (0,1]$ by assigning to a branched cover $f:E \to D$ the real number 
$$\sup\, \{t \leq 1 : f^{-1}(D_t) \text{ is a disjoint union of disks}\}. $$
 Then $\xi$ is continuous. Indeed, this may be checked \'etale locally, and hence it suffices to show that $\xi$ restricts to a continuous map on $\Bra^d_{D \rel y}$ for any point $P \in D$. At this point one can just use the explicit basis for the topology of $\Bra^d_{D \rel y}$ (\cref{sec:def of Bra}).  But now the map $\Bra^d_D \to \Bra^d_D$, assigning to a branched cover $f:E \to D$ the branched cover $f^{-1}(D_{\xi(f)}) \to D_{\xi(f)}$, rescaled by a factor $1/\xi(f)$, deformation retracts $\Bra^d_D$ down to  $\Bra^{d,\mathsf{loc}}_{D} $.\end{proof}

Let $\fP$ be the set of all partitions of $d$ (i.e.\ conjugacy classes in $\mathfrak S_d$). 
Each point $a \in \Bra^{d,\mathsf{loc}}$ has a well-defined \emph{boundary monodromy type} $\omega(a) \in \fP$: if $x$ corresponds to a branched cover $E \to D$, then $\omega(a)$ is the monodromy of the cover around a disk in $D$ containing all branch points; since this is an unbased loop and we have not fixed a trivialization anywhere, the monodromy is only well-defined as a conjugacy class in $\fS_d$. Note that the function $\omega$ is discontinuous if $\fP$ is given the discrete topology --- indeed, $\Bra^{d,\mathsf{loc}}_{D}$ is connected. But every $a \in \Bra^{d,\mathsf{loc}}_D$ has a neighborhood on which $\omega(a')$ is a \emph{refinement} of $\omega(a)$. That is, $\omega$ is continuous once we equip $\fP$ with the Alexandroff topology associated with the refinement order. (As in the preceding proof, this can be verified \'etale locally.) Moreover, $\omega$ corresponds precisely to a \emph{stratification} of $\Bra^{d,\mathsf{loc}}_{D}$, with strata $e_\lambda$ indexed by elements $\lambda \in \fP$.

\begin{thm}\label{orbi-cell-structure}
The space $\Bra^{d,\mathsf{loc}}_D$ admits the structure of a CW-orbispace, with open cells $e_\lambda$ for $\lambda \in \fP$. Each cell $e_\lambda$ is orientable, of real dimension $2 \cdot N(\lambda)$.
\end{thm}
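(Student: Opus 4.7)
The plan is to construct, for each partition $\lambda$ of $d$, an explicit characteristic map $f_\lambda \colon [\mathbb D^{2N(\lambda)}/G_\lambda] \to \Bra^{d,\mathsf{loc}}_D$ whose restriction to the interior is an isomorphism of stacks onto $e_\lambda$. Set $k = N(\lambda)$. By \cref{lem: characterization of Bra loc}, any cover in $e_\lambda$ has branch points with local monodromies $\sigma_i$ satisfying $\sum_i N(\sigma_i) = k$: the generic cover has $k$ simple branch points each with a transposition monodromy, while degenerations arise when such points collide and their monodromies multiply. Since $D \cong \C$, the configuration space (allowing coincidences) is $\operatorname{Sym}^k(D) \cong \operatorname{Sym}^k(\C) \cong \C^k \cong \R^{2k}$ via elementary symmetric polynomials, explaining the dimension $2N(\lambda)$.

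I construct $f_\lambda$ via a framed moduli space. Fix a representative $\pi \in \fS_d$ of cycle type $\lambda$ and let $\widetilde U_\lambda$ be the space parametrizing pairs of an ordered configuration $(z_1, \dotsc, z_k) \in D^k$ together with an ordered tuple $(\tau_1, \dotsc, \tau_k)$ of transpositions in $\fS_d$ with $\tau_1 \dotsm \tau_k = \pi$, subject to the identification at coincident $z_i$'s that the corresponding transpositions are multiplied. Forgetting the ordering and the tuple defines a natural map $\widetilde U_\lambda \to e_\lambda$. A finite group $G_\lambda$ containing the centralizer $Z_{\fS_d}(\pi)$ (acting by simultaneous conjugation $\tau_i \mapsto g \tau_i g^{-1}$) and the symmetric group $\fS_k$ (reordering the tuple via Hurwitz braid moves on monodromies) acts on $\widetilde U_\lambda$. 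The stabilizer of a point in $\widetilde U_\lambda$ is the automorphism group of the corresponding cover, so the induced map $[\widetilde U_\lambda / G_\lambda] \xrightarrow{\sim} e_\lambda$ is an isomorphism of stacks.

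The crucial technical point is that $\widetilde U_\lambda$ is $G_\lambda$-equivariantly homeomorphic to $\R^{2k}$ with $G_\lambda$ acting linearly as a complex reflection representation. The generic unramified part of $\widetilde U_\lambda$ is a finite cover of $D^k$ minus the diagonals, connected by the classical theorem of Clebsch--L\"uroth--Hurwitz and Kluitmann cited in \cref{sec: space of branched covers of the line}; extending over coincidences gives a finite cover of $D^k \cong \C^k$ whose local ramification is of complex reflection type. A local-to-global analysis identifies $\widetilde U_\lambda \cong \C^k \cong \operatorname{int}(\mathbb D^{2k})$ with $G_\lambda$ acting linearly. Granted this, I extend $f_\lambda$ to the closed ball by allowing branch points to escape to $\partial D$: escape of a simple branch point removes its transposition from the product, yielding a boundary monodromy of strictly finer cycle type $\mu$ with $N(\mu) < N(\lambda)$. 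Hence $f_\lambda(\partial \mathbb D^{2k}) \subseteq \bigcup_{N(\mu) < N(\lambda)} e_\mu$, establishing closure-finiteness. Disjointness and covering are immediate from the stratification by $\omega$; the weak topology axiom is automatic given the finiteness of $\fP$; and orientability follows because $G_\lambda$ acts through $\C$-linear, hence orientation-preserving, transformations on $\C^k$.

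The main obstacle I anticipate is this identification of $\widetilde U_\lambda$ as topologically $\R^{2k}$ with a linearizable $G_\lambda$-action by complex reflections. This is a non-trivial combinatorial-topological assertion about the Hurwitz braid monodromy and the local ramification geometry over the discriminant of $\operatorname{Sym}^k(\C)$, but once granted, the remaining verifications are largely formal.
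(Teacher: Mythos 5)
There is a genuine gap, and it sits exactly where you flagged it: the claim that $\widetilde U_\lambda$ is homeomorphic to $\R^{2N(\lambda)}$ with $G_\lambda$ acting linearly. Your proposed justification --- that the generic part is a connected finite cover of the configuration space, that the local ramification over the discriminant of $\operatorname{Sym}^k(\C)$ is ``of complex reflection type,'' and that a ``local-to-global analysis'' then identifies the total space with $\C^k$ --- is not a valid argument. Knowing the base and the local ramification data of a finite branched cover does not determine the global topology of its total space (Chevalley--Shephard--Todd goes in the opposite direction: it describes quotients, not covers), and a priori your $\widetilde U_\lambda$ (essentially a fiber product of branched covers of $\operatorname{Sym}^k(\C)$, or a Hurwitz-type cover of the ordered configuration space extended over the diagonals) could be singular or could be a smooth contractible-looking space that is nonetheless not a ball with a linearizable action. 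The assertion that the relevant Hurwitz/Lyashko--Looijenga cover of $\operatorname{Sym}^{d-1}(\C)$ is again $\C^{d-1}$ is a genuinely global fact, and it is the entire content of the theorem; your identification $\operatorname{Sym}^k(D)\cong\C^k$ only explains the dimension of the base, not of the cell.

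The paper closes this gap by algebraic geometry rather than by covering-space combinatorics: a connected disk-cover of the disk with full boundary monodromy is a polynomial map, so the stratum for the one-block partition is $M_0^\fr(\P^1,\alpha)\cong\mathrm{Pol}_d/\mathbb G_a\cong\A^{d-1}$ by \cref{depressed-polynomials} and \cref{prop:algebraic vs topological branched covers}, giving $B_{(d)}\cong[\C^{d-1}/\mu_d]$ outright; a general $\lambda$-stratum is then the product $\prod_k[\C^{a_k(k-1)}/(\mu_k^{a_k}\rtimes\fS_{a_k})]$ obtained by treating each connected component of the cover (each a disk, hence a polynomial) separately. Note also that the group acting is the centralizer $Z_{\fS_d}(\pi)\cong\prod_k\mu_k^{a_k}\rtimes\fS_{a_k}$; your extra $\fS_k$ factor permuting an ordered transposition tuple belongs to the passage from a framed Hurwitz space to the moduli space, not to the isotropy of the orbicell, and verifying that its quotient reproduces the correct stabilizers (automorphism groups of covers) on the collision loci is a further unchecked point. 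Your compactification by letting branch points escape to $\partial D$ is morally the paper's construction (restricting covers of a radius-$2$ disk to the unit disk), and the closure-finiteness, orientability, and weak-topology remarks are fine, but they all rest on the unproved cell identification.
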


\begin{proof}We need to check that the strata $e_\lambda$ are orbi-cells, and construct the attaching maps; the rest is clear. Let $\mathbb E$ be the closed disk of radius $2$ in $\R^2$. Since $\Bra^d_{\mathbb E} \cong [\Bra^d_{\mathbb E \rel y}/\fS_d]$, where $y \in \partial \mathbb E$ is any point, the connected components of $\Bra^d_{\mathbb E}$ are the same as $\fS_d$-orbits of connected components of $\Bra^d_{\mathbb E \rel y}$. Connected components of $\Bra^d_{\mathbb E \rel y}$ were described in \cref{rem: connected components}. Write $B_\lambda$ for the unique component of $\Bra^d_{\mathbb E}$ parametrizing branched covers with boundary monodromy type $\lambda$, all of whose components are disks.

If $\lambda = (d)$ is the partition with a single block of size $d$, then $B_\lambda \cong [\C^{d-1}/\mu_d]$, as follows from \cref{depressed-polynomials,prop:algebraic vs topological branched covers}. Now if $\lambda$ has $a_k$ blocks of each size $k$, then considering one connected component of the covering at a time, we may write similarly
$$ B_\lambda \cong \prod_k \mathrm{Sym}^{a_k} [\C^{k-1}/\mu_k] \cong \prod_k [\C^{a_k(k-1)} / ({\mu}_k^{a_k} \rtimes \fS_{a_k})].$$
In particular, $B_\lambda$ is topologically an open oriented orbi-cell of dimension $2\cdot N(\lambda) = 2\sum_k a_k(k-1).$ 

Consider the map $j^\ast : \Bra^d_{\mathbb E} \to \Bra^d_D$, given by taking a branched cover of $\mathbb E$ and restricting it to the open subset $D$ of $\mathbb E$ (\cref{functorial for open immersions}). Let $U$ be the open substack of $\Bra^d_{\mathbb E}$ defined by the condition that all branch points lie in $D$. Then $B_\lambda \cap U$ maps homeomorphically to $e_\lambda$ under $j^\ast$.  Since $U$ is homeomorphic to $\Bra^d_{\mathbb E}$ by rescaling by a factor $1/2$, we see also that $B_\lambda \cap U$ is homeomorphic to $B_\lambda$, so that $e_\lambda$ is indeed an oriented orbicell of the correct dimension for all $\lambda$. But the map $j^\ast $ also furnishes attaching maps for all the cells. Indeed, $B_\lambda \cap U$ is an open orbi-ball in $B_\lambda$, and its closure is a closed orbi-ball. The restriction of $j^\ast $ to this closed ball is an attaching map for the cell $e_\lambda \subset \Bra^{d,\mathsf{loc}}_D$.  
%
%Identify\todo{refer to something above?} $e(\mu)$ with branched covers of the closed disk (with interior branch points) whose connected components are closed disks and the boundary monodromy has class $\mu$.
%
%For a disjoint union of closed disks, the connected components of the boundary are exactly the boundaries of connected components. Thus, given a point of $e(\mu)$, the connected components of the corresponding branched cover correspond to the cycles of $\mu$ (ie orbits of the boundary monodromy). Fixing a point $P \in \partial D$ and marking a trivialization of the fiber at $P$ gives an orbi-cover $\widetilde{e}(\mu)$ of $e(\mu)$ with $e(\mu) = \widetilde{e}(\mu)/Z_\mu$, where $Z_\mu = \Stab_{\fS_d}(\sigma)$ for any representative $\sigma \in \fS_d$\todo{This seems to depend on $d$}.
%
%Note that $\widetilde{e}(\mu)$ now decomposes as a product $\widetilde{e}((\mu_1)) \times \dotsm \times \widetilde{e}((\mu_r))$, so it suffices to prove that $\widetilde{e}((d))$ is a cell (not just orbi-cell) of dimension $2(d - 1)$ with oriented deck group. 
%So we reduce to the case of $\mu = (d)$, ie where the boundary monodromy is a $d$-cycle, in particular $e((d))$ parametrizes branched covers of the closed disk by the closed disk (with interior branch points). 
%It suffices to identify $\widetilde{e}((d))$ with the space of depressed monic polynomials of degree $d$, which is isomorphic to $\C^{d-1}$, see \cref{depressed-polynomials}.
\end{proof}

\begin{cor}The homology groups $H_\bullet(\Bra_D^{d};\Q)$ are concentrated in even degrees. If $k=2m$, then $\dim H_k(\Bra_D^{d};\Q) =  \#\{\lambda\in\fP : N(\lambda)=m\}$.
\end{cor}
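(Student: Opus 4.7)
The plan is that this corollary is essentially a formal consequence of the orbi-cell structure established in \cref{orbi-cell-structure}, combined with the deformation retraction $\Bra_D^d \simeq \Bra^{d,\mathsf{loc}}_D$ proved earlier in the subsection. So my first step is to replace $\Bra_D^d$ by $\Bra^{d,\mathsf{loc}}_D$ on the left-hand side; since the inclusion is a homotopy equivalence, rational (co)homology is unchanged.

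Next I would invoke the rational cellular chain complex for CW-orbispaces described in \cref{subsec: CW}. Recall that with $\Q$-coefficients, each orientable open cell $e_\alpha$ of real dimension $n_\alpha$ contributes a one-dimensional summand $H_{n_\alpha}^{\mathrm{BM}}(e_\alpha;\Q) \cong \Q$ to $C_{n_\alpha}(\Bra^{d,\mathsf{loc}}_D;\Q)$, and non-orientable cells contribute nothing. By \cref{orbi-cell-structure}, every cell $e_\lambda$ is orientable of real dimension $2 N(\lambda)$. Hence
\[ C_p(\Bra^{d,\mathsf{loc}}_D;\Q) \;\cong\; \bigoplus_{\substack{\lambda \in \fP \\ 2 N(\lambda) = p}} \Q. \]
In particular $C_p = 0$ whenever $p$ is odd.

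The punchline is then immediate: for every $p$, the cellular differential $C_p \to C_{p-1}$ has either zero source or zero target (one of $p, p-1$ being odd), so all differentials vanish. Consequently $H_p(\Bra^{d,\mathsf{loc}}_D;\Q) = C_p$, which vanishes for $p$ odd and has dimension $\#\{\lambda \in \fP : N(\lambda) = m\}$ for $p = 2m$. Poincar\'e duality is not needed, only that the cellular chain complex computes rational homology for CW-orbispaces.

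I do not expect any real obstacle here; all the work was already done in constructing the CW-orbispace decomposition. The only point worth flagging is that one \emph{must} work rationally: over $\Z$ the Borel--Moore homology of an orbi-cell $[\C^n/G]$ is more complicated (it involves the group cohomology of $G$), and in particular could contribute in odd total degrees, so the evenness statement would fail. With $\Q$-coefficients the finite isotropy groups become invisible, which is precisely what makes the cell count directly read off the Betti numbers.
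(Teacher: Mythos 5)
Your argument is correct and is exactly the intended one: pass to $\Bra^{d,\mathsf{loc}}_D$ via the deformation retraction, observe that by \cref{orbi-cell-structure} all orbi-cells are orientable and even-dimensional, so the rational cellular chain complex of \cref{subsec: CW} is concentrated in even degrees and all differentials vanish. (The paper leaves this as an immediate corollary; your only slightly off remark is that Poincar\'e duality \emph{is} used in the paper's justification that each orientable $n_\alpha$-cell contributes exactly one copy of $\Q$ to $H^{\mathrm{BM}}_{n_\alpha}$, but that is part of the machinery you are legitimately citing.)
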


\begin{prop}\label{prop:stability}The stabilization map $H_k(\Bra_D^d;\Q) \rightarrow H_k(\Bra_D^{d+1};\Q)$ is an isomorphism for $k<d+1$. 
\end{prop}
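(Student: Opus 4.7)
The plan is to exploit the explicit CW-orbispace description from \cref{orbi-cell-structure}, which expresses $H_\bullet(\Bra_D^d;\Q)$ combinatorially. Since the CW-decomposition of $\Bra_D^{d,\mathsf{loc}}$ has cells only in even dimensions, the cellular differentials vanish for dimensional reasons, and we obtain a canonical identification
\[ H_{2m}(\Bra_D^d;\Q) \cong \bigoplus_{\substack{\lambda \vdash d \\ N(\lambda)=m}} \Q\cdot [e_\lambda]. \]

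Next I would verify that the stabilization map is cellular with respect to these decompositions. The map $\Bra_D^d \to \Bra_D^{d+1}$ sends a cover $f\colon E \to D$ to $f \sqcup \mathrm{id}_D$. This evidently preserves the condition that every connected component is a disk, so it restricts to a map $\Bra_D^{d,\mathsf{loc}} \to \Bra_D^{d+1,\mathsf{loc}}$. The boundary monodromy conjugacy class transforms from $\lambda \in \fP_d$ to $\lambda \cup \{1\} \in \fP_{d+1}$ (viewing $\fS_d \subset \fS_{d+1}$), so stabilization carries $e_\lambda$ into $e_{\lambda \cup \{1\}}$. In fact the resulting map $e_\lambda \to e_{\lambda \cup \{1\}}$ is a bijection: any cover in $e_{\lambda \cup \{1\}}$ has a connected component whose boundary monodromy is a $1$-cycle, and such a component is necessarily a trivial degree-$1$ cover of $D$, i.e.\ it arises from stabilizing some cover in $e_\lambda$.

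I would then compute the induced degree on cellular chains. Using the explicit product description $B_\lambda \cong \prod_k [\C^{a_k(k-1)}/(\mu_k^{a_k} \rtimes \fS_{a_k})]$, stabilization only modifies the $k=1$ factor, replacing $[\ast/\fS_{a_1}]$ by $[\ast/\fS_{a_1+1}]$. The induced map on coarse spaces is therefore the identity, so the degree with respect to crude fundamental classes is $1$. Consequently the chain-level map sends the basis element $[e_\lambda]$ to $[e_{\lambda \cup \{1\}}]$, hence is injective, and its image is exactly the span of those basis elements $[e_\mu]$ for $\mu \vdash d+1$ such that $\mu$ contains a part of size $1$.

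What remains is a short combinatorial count. For $k = 2m$, I need every $\mu \vdash d+1$ with $N(\mu) = m$ to have a part equal to $1$ whenever $k < d+1$. If no such part existed, all $c := (d+1) - m$ cycles of $\mu$ would have length $\geq 2$, forcing $d+1 \geq 2c = 2(d+1-m)$, equivalently $2m \geq d+1$, contradicting $k<d+1$. This step is straightforward; the main conceptual content of the argument is the cellularity and degree-$1$ computation described above, which is where I would focus checking. The result then follows.
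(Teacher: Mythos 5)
Your proposal is correct and follows essentially the same route as the paper: both restrict to $\Bra_D^{d,\mathsf{loc}} \to \Bra_D^{d+1,\mathsf{loc}}$, observe that on coarse spaces the cell $e_\lambda$ maps isomorphically (with degree $1$) onto $e_{\lambda\cup\{1\}}$, and reduce to the combinatorial fact that every $\mu \vdash d+1$ with $2N(\mu) < d+1$ contains a part of size $1$. Your explicit verification of the degree via the product description of $B_\lambda$ and your inequality $d+1 \geq 2(d+1-m)$ are just slightly more detailed renderings of the steps the paper states without proof.
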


\begin{proof}
    We work instead with $\Bra_D^{d,\mathsf{loc}} \rightarrow \Bra_D^{d+1,\mathsf{loc}}$. The stabilization map is not representable --- for example, the point of $\Bra_D^{d,\mathsf{loc}}$ corresponding to the trivial covering has isotropy group $\fS_d$, and its image in $\Bra_D^{d,\mathsf{loc}}$ has isotropy group $\fS_{d+1}$. For this reason we will find it more convenient to work with the induced map on coarse spaces. On coarse spaces, the map $|\Bra_D^{d,\mathsf{loc}}| \rightarrow |\Bra_D^{d+1,\mathsf{loc}}|$ carries the cell $|e_\lambda|$ isomorphically onto the cell $|e_{\lambda+1}|$, where for each $\lambda \in \mathfrak P_d$ we denote by $\lambda+1 \in \mathfrak P_{d+1}$ the partition obtained by adding a single entry ``$1$'' to $\lambda$. Thus the map is an isomorphism below degree $\min \{ 2N(\lambda) : \lambda \in \mathfrak P_{d+1}\setminus \fP\}$. This minimum is attained by the partition $(2,2,\ldots)$ if $d+1$ is even, and $(3,2,2,\ldots)$ if $d+1$ is odd, and the minimum is in either case\footnote{In particular, the stable range is in fact one better than stated if $d$ is even.} $N=\lceil(d+1)/2\rceil$.
\end{proof}

\subsection{Calculation of the cup-product}To finish the arguments, we will need to know moreover the cup-product in the rational cohomology ring of $\smash{\Bra^d_{D}}$, or dually, the coproduct on homology. For this too, we will use the space $\smash{\Bra^{d,\mathsf{loc}}_{D}}$ and its CW-structure. To obtain a coproduct on cellular chains, one needs to choose a cellular approximation of the diagonal. Fortunately, the space $\smash{\Bra^{d,\mathsf{loc}}_{D}}$ comes with a geometrically meaningful choice of such a cellular approximation.

\begin{lem}Let $i : D \sqcup D \hookrightarrow D$ be the open embedding given by the two maps $D\to D$ defined by \( z \mapsto \tfrac 1 4 z \pm \tfrac 1 2.\) Then the restriction map $$i^\ast : \Bra_D^d \to \Bra^d_{D \sqcup D} = \Bra^d_D \times \Bra^d_D$$
is homotopic to the diagonal, and restricts to a cellular map $\Bra^{d,\mathsf{loc}}_D\to \Bra^{d,\mathsf{loc}}_D\times \Bra^{d,\mathsf{loc}}_D$.
\end{lem}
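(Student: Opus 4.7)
The plan is to decompose $i$ as a disjoint union of two open affine self-embeddings of $D$ and address the two assertions separately. Writing $j_1, j_2 \from D \hookrightarrow D$ for the embeddings $z \mapsto \tfrac{1}{4} z + \tfrac{1}{2}$ and $z \mapsto \tfrac{1}{4} z - \tfrac{1}{2}$, and identifying $\Bra^d_{D \sqcup D}$ with $\Bra^d_D \times \Bra^d_D$, the restriction map is $i^\ast = (j_1^\ast, j_2^\ast)$.

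For the homotopy to the diagonal, I would linearly interpolate each $j_k$ to $\mathrm{id}_D$ by $j_k^t(z) = (1-t) j_k(z) + t z$ for $t \in [0,1]$. This is an injective affine map with positive linear coefficient $(1+3t)/4$, and a triangle inequality estimate gives $|j_k^t(z)| \leq (3+t)/4 < 1$ whenever $|z| < 1$, so it defines an open embedding $D \hookrightarrow D$ depending continuously on $t$. Continuity of the restriction construction of \cref{functorial for open immersions} in the embedding---immediate from the description of the basic open sets of $\Bra^d$ in \cref{sec:def of Bra}---then yields a homotopy from $j_k^\ast$ to the identity, and hence from $i^\ast$ to $\Delta$.

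For the second assertion, the core input is the additivity observation of the remark following \cref{lem: characterization of Bra loc}: if $f \in \Bra^{d,\loc}_D$ has local monodromies $\sigma_1, \dotsc, \sigma_k$, then $N(\sigma_\partial) = \sum_i N(\sigma_i)$. Let $S_1, S_2, S_{\mathrm{ext}}$ partition $\{1,\dotsc,k\}$ according to whether the branch point $q_i$ lies in $j_1(D)$, $j_2(D)$, or outside, and let $\tau_k$ denote the boundary monodromy of the restricted cover $j_k^\ast f$, which is a product of $\{\sigma_i\}_{i \in S_k}$ in some order, up to conjugation. Two applications of subadditivity give $N(\tau_k) \leq \sum_{i \in S_k} N(\sigma_i)$ for $k = 1,2$, and conversely, expressing a loop around $\partial D$ as a concatenation of loops around $j_1(D)$, $j_2(D)$, and the exterior branch points, subadditivity gives $N(\sigma_\partial) \leq N(\tau_1) + N(\tau_2) + \sum_{i \in S_{\mathrm{ext}}} N(\sigma_i)$. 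Combined with $N(\sigma_\partial) = \sum_i N(\sigma_i)$, all inequalities must be equalities, so $N(\tau_k) = \sum_{i \in S_k} N(\sigma_i)$. Hence $j_k^\ast f$ satisfies condition (3) of \cref{lem: characterization of Bra loc} and lies in $\Bra^{d,\loc}_D$, with boundary monodromy type $\mu_k$ satisfying $N(\mu_1) + N(\mu_2) \leq N(\lambda)$. This shows $i^\ast$ carries the cell $e_\lambda$ into the $2N(\lambda)$-skeleton of the product, hence is cellular.

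The main obstacle I anticipate is the bookkeeping in the third paragraph: showing that the restriction of a local cover to a sub-disk is again local does not follow from membership in $\Bra^{d,\loc}_D$ alone, but requires combining the additivity equation with two subadditivity inequalities in opposite directions. Once this is in place, the explicit homotopy in the second paragraph and the skeletal-filtration bound are routine.
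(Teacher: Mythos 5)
Your proof is correct and follows essentially the same route as the paper: a straight-line homotopy of self-embeddings for the first assertion, and the $N$-based characterization of $\Bra^{d,\loc}_D$ (\cref{lem: characterization of Bra loc}) for the second. The paper compresses the second part into ``it follows from the lemma''; your sub/superadditivity bookkeeping, forcing equality in $N(\tau_1)+N(\tau_2)+\sum_{i\in S_{\mathrm{ext}}}N(\sigma_i)\geq N(\sigma_\partial)=\sum_i N(\sigma_i)$, is exactly the intended (and correct) elaboration, and it also yields the skeletal bound needed for cellularity.
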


\begin{proof}
    If $j: D \hookrightarrow D$ is a self-embedding of the disk as a smaller disk, then the induced restriction map $j^\ast :\Bra^d_D \to \Bra^d_D$ is a homotopic to the identity by a straight-line homotopy. So $i^\ast$ is homotopic to the diagonal, since the projection onto each factor is homotopic to the identity. Moreover, it follows from \cref{lem: characterization of Bra loc} that $i^\ast$ carries $\Bra^{d,\mathsf{loc}}_D$ to $\Bra^{d,\mathsf{loc}}_D\times \Bra^{d,\mathsf{loc}}_D$, and restricts to a cellular map.
\end{proof}

Armed with this explicit cellular approximation, we proceed to compute the coproduct on cellular chains $C_\bullet(\Bra^{d,\mathsf{loc}}_D;\Q)$. 

\begin{lem}    Given $\lambda, \mu, \nu \in \fP$ with $N(\lambda) = N(\mu) + N(\nu)$, the degree of the map $i^*$ with respect to the cells $e_\lambda$ and $e_\mu \times e_\nu$ is the number of ways of writing a representative of $\lambda$ as a product of representatives of $\mu$ and $\nu$.
\end{lem}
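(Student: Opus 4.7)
The plan is to lift $i^*$ to framed moduli spaces and compute the induced pushforward of Borel--Moore fundamental classes. For any permutation $\pi$, let $F_\pi$ denote the connected component of the framed moduli space of all-disk covers of $D$ with boundary monodromy exactly $\pi$; by the arguments in the proof of \cref{orbi-cell-structure}, $F_{\pi_\lambda} \cong \A^{N(\lambda)}$ and $e_\lambda \cong [F_{\pi_\lambda}/Z(\pi_\lambda)]$ for any fixed representative $\pi_\lambda$ of cycle type $\lambda$. By definition, the crude fundamental class $[\mathrm{crude}_\lambda]$ is the pushforward of $[F_{\pi_\lambda}]$ under the cell map $F_{\pi_\lambda} \to e_\lambda$; similarly $[\mathrm{crude}_{\mu\nu}]$ is the pushforward of $[F_{\pi_+^0} \times F_{\pi_-^0}]$ for any fixed representatives $\pi_+^0, \pi_-^0$ of cycle types $\mu, \nu$.

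The restriction $i^*$ lifts to a partial map $\widetilde{i^*}$ on framed spaces, defined on the open subset $U \subset F_{\pi_\lambda}$ where exactly $N(\mu)$ branch points lie in $D_+$ and $N(\nu)$ in $D_-$. The key claim is that the connected components of $U$ are in natural bijection with the pairs $(\pi_+, \pi_-)$ of permutations satisfying $[\pi_+] = \mu$, $[\pi_-] = \nu$, and $\pi_+ \pi_- = \pi_\lambda$, and that on the component indexed by such a pair, $\widetilde{i^*}$ is an open embedding with dense image in $F_{\pi_+} \times F_{\pi_-}$. To justify this, fix a basepoint on $\partial D$ and standard paths to each branch point: multiplying (in path order) the local monodromies at branch points in $D_+$ and in $D_-$ separately produces a locally constant invariant $(\pi_+, \pi_-)$ on $U$ with $\pi_+ \pi_- = \pi_\lambda$. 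Conversely, within $U$, configurations realizing a fixed decomposition form a subspace essentially parametrized by the positions of the branch points in their respective sub-disks, which is connected (indeed simply connected, being a product of symmetric products of open disks).

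Granting the key claim, pushforward in Borel--Moore homology gives $\widetilde{i^*}_*[F_{\pi_\lambda}] = \sum [F_{\pi_+} \times F_{\pi_-}]$, summed over the decompositions. Pushing further to $e_\mu \times e_\nu$, each summand maps to $[\mathrm{crude}_{\mu\nu}]$, and so $(i^*)_*[\mathrm{crude}_\lambda] = m \cdot [\mathrm{crude}_{\mu\nu}]$ where $m$ is the number of decompositions, as claimed. The main obstacle is rigorously establishing the key claim, especially the fact that configurations realizing a fixed decomposition form a single connected component; this requires care in the multi-cycle case, where branch points belonging to different connected components of the cover may be distributed independently between $D_+$ and $D_-$.
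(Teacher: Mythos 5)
Your argument is essentially the paper's own proof: both pass to the framed/universal-cover level where the boundary monodromy is a fixed representative $\pi_\lambda$, identify the connected components of the preimage of $e_\mu\times e_\nu$ with the factorizations $\pi_+\pi_-=\pi_\lambda$ into permutations of cycle types $\mu$ and $\nu$, and observe that each such component contributes exactly $1$ to the degree. The one imprecision is describing a fixed-decomposition component as a ``product of symmetric products of open disks'' --- it is rather identified with $F_{\pi_+}\times F_{\pi_-}\cong \A^{N(\mu)}\times\A^{N(\nu)}$ via the cell structure of \cref{orbi-cell-structure} --- but since these are cells, connectedness holds either way and the proof goes through.
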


\begin{proof}In accordance with the discussion in \cref{subsec: CW}, we consider the zigzag
    $$ e_\lambda \leftarrow (e_\lambda \cap (i^\ast)^{-1}(e_\mu \times e_\nu)) \to (e_\mu \times e_\nu)$$
    and the induced map on Borel--Moore homology. 
    The open cell $e_\lambda$ parametrizes branched covers of $D$ by disjoint unions of disks, with boundary monodromy type $\lambda$. Its open substack $(e_\lambda \cap (i^\ast)^{-1}(e_\mu \times e_\nu))$  parametrizes those covers for which all branch points lie in the two smaller disks $D \sqcup D$ (thought of as a subset of $D$ via the embedding $i$), and such that the boundary monodromy types around the two smaller disks are $\mu$ and $\nu$, respectively. Each connected component of this open substack maps isomorphically onto $e_\mu \times e_\nu$.

    Now the fundamental classes of $e_\lambda$, $e_\mu$ and $ e_\nu$ were defined as the pushforwards of the fundamental classes of their respective universal covers $\smash{\widetilde e_\lambda}$, $\smash{\widetilde e_\mu}$ and $\smash{\widetilde  e_\nu}$. We can identify $\smash{\widetilde e_\lambda}$ with the space parametrizing in addition the datum of a trivialization of the cover along a tangential basepoint, such that the boundary monodromy is given by $\pi \in \fS_d$, where $\pi$ is a fixed representative of the conjugacy class $\lambda.$ But now the inverse image of $e_\lambda \cap (i^\ast)^{-1}(e_\mu \times e_\nu)$ inside the disk $\smash{\widetilde e_\lambda}$ is simply a disjoint union of smaller disks, one for each factorization of $\pi$ as a product of a permutation of cycle-type $\mu$ and a permutation of cycle-type $\nu$.  The result follows. 
\end{proof}

\begin{prop}\label{prop:polynomial ring}
    The rational cohomology ring $H^\bullet(\Bra^d_D; \Q)$ agrees in degrees $\leq d $ with a polynomial algebra with a single generator in each positive even degree. 
\end{prop}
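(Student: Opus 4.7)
The plan is to define explicit polynomial generators $x_k \in H^{2k}(\Bra^d_D;\Q)$ and to show, via the cup-product formula just established, that the ring map $\Q[x_1,x_2,\ldots] \to H^\bullet(\Bra^d_D;\Q)$ sending $x_k$ to $x_k$ is an isomorphism in degrees $\leq d$. Concretely, let $x_k := e^{(k+1,\,1^{d-k-1})}$ be the cohomology class dual to the cell indexed by the partition with a single non-trivial part of size $k+1$. For a partition $\nu = (\nu_1,\ldots,\nu_r) \vdash m$, set $x_\nu := x_{\nu_1}\cdots x_{\nu_r}$; iterating the formula from the previous lemma, the coefficient of $e^\lambda$ in $x_\nu$ counts length-additive factorizations $\pi_\lambda = \sigma_1\cdots\sigma_r$ with $\sigma_i$ a $(\nu_i+1)$-cycle, where $\pi_\lambda$ is a fixed representative of $\lambda$.

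The key combinatorial lemma I would prove says that in any such factorization, letting $B_1,\ldots,B_s$ be the connected components of the ``support overlap'' graph on $\{1,\ldots,r\}$ (with $i \sim i'$ iff $\mathrm{supp}(\sigma_i)\cap\mathrm{supp}(\sigma_{i'})\neq\emptyset$) and $C_j := \bigcup_{i\in B_j}\mathrm{supp}(\sigma_i)$, the restriction $\pi_\lambda|_{C_j}$ is a single cycle of length $\sum_{i\in B_j}\nu_i + 1$. The proof is short: traversing a spanning tree of the overlap graph inside $B_j$ gives $|C_j| \leq \sum_{i\in B_j}\nu_i + 1$, while length-additivity applied to the sub-product $\prod_{i\in B_j}\sigma_i = \pi_\lambda|_{C_j}$ gives $N(\pi_\lambda|_{C_j}) = \sum_{i\in B_j}\nu_i$, forcing at most one cycle on $C_j$ and hence (since $C_j$ is nonempty) exactly one.

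It follows that the shifted partition $\lambda^*$ (subtracting $1$ from each part of $\lambda$ and dropping zeros) appearing in $x_\nu$ must be a \emph{coarsening} of $\nu$ in the sense of partitioning the parts of $\nu$ into blocks and summing within each block. Since any nontrivial grouping strictly decreases the number of parts, writing $\nu^\uparrow := (\nu_1+1,\ldots,\nu_r+1,1,\ldots) \vdash d$ we have
\[ x_\nu = \Bigl(\prod_k a_k!\Bigr)\,e^{\nu^\uparrow} \;+\; \sum_{\substack{\lambda^* \vdash m \\ \ell(\lambda^*) < r}} c^\lambda\,e^\lambda, \]
where $a_k$ is the multiplicity of $k$ in $\nu$, $\ell(\cdot)$ denotes the number of parts, and the leading coefficient $\prod_k a_k!$ counts the all-disjoint-support factorizations (pair each $\sigma_i$ with a cycle of $\pi_{\nu^\uparrow}$ of matching length; then $\sigma_i$ is determined). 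Ordering partitions of $m$ by decreasing number of parts, the transition matrix from $\{x_\nu : \nu \vdash m\}$ to $\{e^\lambda : N(\lambda) = m\}$ is lower triangular with nonzero diagonal, so the monomials $x_\nu$ are linearly independent in $H^{2m}(\Bra^d_D;\Q)$.

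Finally, $\dim H^{2m}(\Bra^d_D;\Q)$ equals the number of partitions of $d$ into exactly $d-m$ parts, which under $\lambda \mapsto \lambda^*$ corresponds to partitions of $m$ with at most $d-m$ parts; for $2m \leq d$ this constraint is vacuous and we get exactly $p(m)$, matching the degree-$2m$ dimension of $\Q[x_1,x_2,\ldots]$. So $\{x_\nu\}$ is a basis in degrees $\leq d$ and the ring map is an isomorphism there. I expect the main obstacle to be the connectivity bound $|C_j| \leq \sum_{i \in B_j}\nu_i + 1$ and the ``one cycle per component'' consequence; once these are in hand, the triangularity and the dimension count are routine.
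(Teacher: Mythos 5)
Your argument is correct and follows essentially the same route as the paper's: hook classes as generators, the factorization-counting cup-product formula, and triangularity with respect to the number of non-singleton parts, with the disjoint-support factorizations supplying the nonzero leading coefficient $\prod_k a_k!$. The only differences are cosmetic: you verify surjectivity by a dimension count rather than by the $\cup$-irreducibility of hook shapes, and your overlap-graph lemma carefully justifies the triangularity statement that the paper leaves as an assertion.
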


\begin{proof}We dualize the structure coefficients for the homology coproduct determined in the previous lemma. Then the cohomology ring has a basis of classes $t_\mu$ for $\mu \in \fP$, with $|t_\mu|=2N(\mu)$, and $t_\mu \cdot t_\nu$ is a linear combination of classes $t_\lambda$ where $\lambda$ ranges over possible cycle-types of permutations which can be obtained by taking the product of a permutation of cycle-type $\mu$ with one of cycle-type $\nu$, and where $N(\lambda)=N(\mu)+N(\nu)$.

If two partitions $\mu$ and $\nu$ admit cycle-type representatives $\pi$ and $\sigma$ with disjoint support, then we define $\mu \cup \nu$ to be the cycle-type of $\pi \circ \sigma$. When this is the case, one sees that 
$$ t_\mu t_\nu = (\text{nonzero constant})\cdot t_{\mu \cup \nu} + \text{ lower order terms, }$$
where the lower order terms are a linear combination of $t_\lambda$, such that $\lambda$ contains ``$1$'' with strictly greater multiplicity than $\mu \cup \nu$. But now if $N(\mu)+N(\nu) \leq  d/2$, then it will always be the case that $\mu$ and $\nu$ admit representatives with disjoint support. It follows that up to this degree, we obtain a polynomial ring on the generators $t_k := t_{(k,1^{d-k})}$, for $k>1$. Indeed, any partition $\lambda = (\lambda_1,\ldots,\lambda_r)$ satisfies $\lambda = \mu_1 \cup \ldots \cup \mu_r$, with $\mu_i = (\lambda_i,1,1,\ldots)$ a hook-shape (i.e.\ a partition with at most one part bigger than $1$), and hook-shapes are the unique $\cup$-irreducible partitions. 
\end{proof}

\begin{prop}\label{prop:rat homotopy}The rational homotopy groups $\pi_k^\Q\Bra_D^d$, where $0<k<d$, are given by 
$$\pi_k^\Q\Bra_D^d \cong \begin{cases}
    \Q & k \text { even,} \\ 0 & k \text{ odd.}
\end{cases}$$
\end{prop}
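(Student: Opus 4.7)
The plan is to show that $\Bra^d_D$ is rationally equivalent, in the required range, to a product of Eilenberg--MacLane spaces, by constructing an explicit comparison map and invoking the rational Whitehead theorem. The key preliminary is simply-connectivity of $\Bra^d_D$: since the open disk is homeomorphic to $\R^2$, \cref{thm:delooping} yields $\Omega\Bra^d_D\simeq B\Bra^d_{\square\rel\partial}$, and the right-hand side, being the bar construction of a topological monoid, is path-connected, so $\pi_1\Bra^d_D=0$. As usual for a topological stack, by ``$\pi_k$'' we mean the homotopy groups of the underlying homotopy type; since all isotropy groups appearing in the CW-orbispace of \cref{orbi-cell-structure} are finite, \cref{rem:coarse-space-rational-equivalence} ensures that this passage does no harm for rational computations.

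Next, for each integer $n$ with $1\leq n\leq \lfloor d/2\rfloor$ I would pick a class $t_{2n}\in H^{2n}(\Bra^d_D;\Q)$ representing the polynomial generator supplied by \cref{prop:polynomial ring}. These classify a single map
\[
\phi \from \Bra^d_D\longrightarrow Y := \prod_{n=1}^{\lfloor d/2 \rfloor} K(\Q,2n),
\]
whose target is simply connected with rational cohomology ring $\Q[u_2,u_4,\ldots,u_{2\lfloor d/2\rfloor}]$, concentrated in even degrees. Since $\phi^*(u_{2n})=t_{2n}$, \cref{prop:polynomial ring} says precisely that $\phi^*$ is an isomorphism on $H^*(-;\Q)$ in all degrees $\leq d$.

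Both $\Bra^d_D$ and $Y$ are simply connected and of finite type rationally (the former because the set $\fP$ indexing cells in \cref{orbi-cell-structure} is finite), so the range form of the rational Whitehead theorem applies to $\phi$: rationalize, take the homotopy fibre, and apply the Serre spectral sequence together with rational Hurewicz to conclude that $\phi$ induces an isomorphism $\pi_k(-)\otimes\Q\cong\pi_k(Y)\otimes\Q$ for all $k<d$. The target satisfies $\pi_k(Y)\otimes\Q\cong\Q$ when $k$ is an even integer in $[2,2\lfloor d/2\rfloor]$ and vanishes otherwise, a range that covers the full interval $0<k<d$ in both parities; this yields the claim. The main technical point requiring care is pinning down the precise range in the rational Whitehead theorem; an alternative that avoids this invocation is to build a minimal Sullivan model directly from the cellular cochain complex furnished by \cref{orbi-cell-structure}, reading off the rational homotopy groups from the generators in degrees $<d$.
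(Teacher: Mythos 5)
Your proposal is correct and is essentially the paper's own argument: classify the polynomial generators from \cref{prop:polynomial ring} by a map to a product of Eilenberg--MacLane spaces, deduce simple connectivity from the connectedness of the loop space via \cref{thm:delooping}, and conclude by the rational Whitehead/Hurewicz theorems. The only differences are cosmetic (truncating the product at $\lfloor d/2\rfloor$ and spelling out the range bookkeeping).
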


\begin{proof}Consider the map $$\Bra_D^d \to \prod_{k=1}^{d} K(\Q,2k)$$
which classifies the cohomology classes $t_k$ described in the previous proof. This map induces an isomorphism in rational cohomology up to degree $d$ by \cref{prop:polynomial ring}. To deduce the result from the rational Whitehead and Hurewicz theorems, we only need to know that $\Bra_D^d$ is simply connected. But it is easy to see that $\Bra_D^d$ is connected, and \cref{thm:delooping} implies that also its loop space is connected.
\end{proof}

\begin{thm}
    The Mumford conjecture is true.
\end{thm}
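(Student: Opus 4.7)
The plan is to combine all the pieces assembled in the preceding sections; at this point the argument is essentially formal. First, chaining together \cref{prop:Mg to delooping,prop:infinite d to finite d} with \cref{thm:delooping}, and using that $\R^2$ is homeomorphic to the open disk $D$, one obtains the identification
\[ \varinjlim_g H_\bullet(M_g^1; \Q) \cong H_\bullet(\Omega_0 B \Bra_{\square \rel \partial}; \Q) \cong \varinjlim_d H_\bullet(\Omega^2_0 \Bra^d_D; \Q). \]
The left-hand side is the stable rational homology of the mapping class group, since $M_g^1$ is a $K(\Mod_g^1,1)$.

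The next step is to compute the right-hand side using the homotopy information from \cref{prop:rat homotopy}. Each $\Bra^d_D$ is simply connected with rational homotopy groups concentrated in positive even degrees in the range $0<k<d$. Taking double loop spaces and restricting to the base component, one gets $\pi_k^\Q(\Omega^2_0\Bra^d_D)=\pi_{k+2}^\Q(\Bra^d_D)$ for $k\geq 1$, again concentrated in positive even degrees in a range growing with $d$. Hence $\Omega^2_0\Bra^d_D$ is rationally equivalent (in that range) to a product $\prod_{m\geq 1} K(\Q,2m)$, whose rational cohomology ring is a polynomial algebra with one generator in each positive even degree. Passing to the colimit in $d$, the stable rational cohomology $\varinjlim_g H^\bullet(M_g^1;\Q)$ is a polynomial algebra $\Q[x_1,x_2,\ldots]$ with $\lvert x_i\rvert=2i$.

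To finish, one identifies the abstract generators with the Morita--Miller--Mumford classes. Miller's theorem \cite{miller} says that the classes $\kappa_1,\kappa_2,\ldots$ are algebraically independent in the stable rational cohomology, where $\lvert\kappa_i\rvert=2i$. Since the ambient ring has already been shown to be a polynomial algebra with exactly one generator in each positive even degree, a degreewise dimension count forces the $\kappa_i$ to generate as well. Therefore the stable rational cohomology of the mapping class group equals $\Q[\kappa_1,\kappa_2,\ldots]$, which is the Mumford conjecture.

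The main obstacle was never in this concluding paragraph but in the preceding sections: constructing the delooping of the Hurwitz monoid via branched covers (\cref{sec:4}) and carrying out the explicit CW-orbispace computation that pins down the rational cohomology ring of the delooping (\cref{sec:5}). Once those are in hand, the Mumford conjecture follows by the assembly above.
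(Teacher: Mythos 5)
Your proof is essentially the paper's: the same chain \cref{prop:Mg to delooping} $\to$ \cref{prop:infinite d to finite d} $\to$ \cref{thm:delooping} $\to$ \cref{prop:rat homotopy}, followed by reading off the cohomology of the double loop space of a product of Eilenberg--MacLane spaces. One step is not justified as written, namely ``passing to the colimit in $d$'': knowing that $H_k(\Omega^2_0\Bra^d_D;\Q)$ has the expected dimension for all large $d$ does not determine $\varinjlim_d H_k(\Omega^2_0\Bra^d_D;\Q)$ unless you also know that the transition maps are eventually isomorphisms in each degree. This is exactly what \cref{prop:stability} supplies, and it is the one ingredient from the paper's proof that you omit. (Your argument can be repaired without it: a filtered colimit of vector spaces of dimension $\leq n$ has dimension $\leq n$, so the colimit gives an \emph{upper} bound on Betti numbers, and Miller's theorem gives the matching lower bound; but then the dimension count must precede, rather than follow, the assertion that the colimit is a polynomial algebra.) Your closing paragraph identifying the abstract polynomial generators with the $\kappa_i$ via Miller's theorem is a point the paper leaves implicit, and is a legitimate completion of the statement of the conjecture as formulated in the introduction.
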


\begin{proof}By \cref{prop:Mg to delooping,prop:infinite d to finite d} we know that 
$$ \varinjlim_g H_\bullet(M_g^1;\Z) = \varinjlim_d H_\bullet(\Omega_0 B \Bra^d_{\square \rel \partial};\Z).$$ 
By \cref{thm:delooping} we also have 
    $$\Omega_0 B \Bra^d_{\square \rel \partial}\simeq \Omega^2_0 \Bra^d_D$$
    for all $d$. By \cref{prop:rat homotopy} we know the rational homotopy type of $\Omega^2_0 \Bra^d_D$ in a range, and in particular that its rational cohomology ring is a polynomial algebra with a single generator in each positive even degree. By \cref{prop:stability} the stabilization maps with respect to $d$ are isomorphisms in a range. The result follows.
\end{proof}

%\begin{prop}
%The rational cohomology ring $H^*(\Bra^d_D, \Q) = H^*(\Bra^{d,\mathsf{loc}}, \Q)$ is polynomial with generators $t_{2d} \defeq %[e((d+1))] \in H^{2d}$ for $d \ge 2$.
%\end{prop}

%\begin{proof}
%The cellular $\Q$-chain complex obtained from the orbi-cell structure on $\Bra^{d,\mathsf{loc}}$ given by \cref{orbi-cell-structure} is concentrated on even degrees, so must have vanishing differential.
%Therefore the graded vector space $H^*(\Bra^{d,\mathsf{loc}}; \Q)$ has as basis $\set{[e(\mu)] \in H^{2N(\mu)}}{\mu \in \fP}$.

%Given $\mu, \nu \in \fP$, consider choices of $\lambda \in \fP$ such that $N(\lambda) = N(\mu) + N(\nu)$ and there is some representative of $\mu$ and some representative of $\nu$ that multiply to a representative of $\lambda$. 
%Then denoting the number of parts in $\lambda$ by $|\lambda|$, a necessary condition is $|\lambda| \le |\mu| + |\nu|$. 
%There is a unique $\lambda \in \fP$ such that equality holds, exactly when the representatives of $\mu$ and $\nu$ have disjoint support.

%Thus \cref{diagonal-map-on-cells} implies that the $\Q$-algebra map $\Q[t_2, t_4, \dots] \to H^*(\Bra^{d,\mathsf{loc}}_D; \Q)$ defined by sending $t_{2d} \mapsto [e((d + 1))]$ induces a linear isomorphism on the associated graded with respect to the filtration by $| \cdot |$, therefore is itself an isomorphism.
%\end{proof}

\section{Comparison with Bianchi's papers}
\label{sec:bianchi-comparison}

Let us briefly comment on how the argument presented here compares to Bianchi's. In particular, we indicate for each section of the present paper, which parts of Bianchi's papers play the same role in the global structure of the proof.

\cref{sec:1} corresponds to \cite[Sections~2--5]{bianchi4}. The main goal is to show that a moduli space of branched covers of the disk is a classifying space for $\Mod_g^1$. Our argument does not pass through B\"odigheimer's slit configuration space \cite{bodigheimer}. The space $\M_0^\fr(\mathbb P^1,(d))_\alpha \cong \mathbb A^{d-1}$ appearing in \cref{depressed-polynomials} plays a key role also in Bianchi's arguments; he denotes it $\mathfrak{NMonPol}_d$ \cite[Definition~3.6]{bianchi4}. The fact that it is topologically a cell is ultimately what leads to the CW-decomposition in \cref{sec:5}, as well as Bianchi's calculation of the rational cohomology ring of the delooping.

In \cref{sec:2}, the goal is firstly to define the moduli space of branched covers of the closed square, and secondly to define the spaces which will form $1$-fold and $2$-fold deloopings, given by branched covers of a half-open or open square. 

For the first part, Bianchi defines the moduli spaces of branched covers explicitly as configuration spaces of points, whose points are decorated with local monodromy data. For this purpose he develops a formalism of \emph{partially multiplicative quandles}, and defines appropriate configuration spaces of points where the points have a decoration in a partially multiplicative quandle. The quandle structure encodes that the local monodromy around a branch point is not completely well-defined as an element of the symmetric group, and in particular how local monodromy changes when two branch points rotate around each other. The partial multiplication records how branch points may collide. This is carried out in \cite{bianchi1}. By contrast, we directly topologize the set of isomorphism classes of branched covers with a trivialization at some boundary point.

For the second part, Bianchi does not construct the delooping as a space of decorated configurations where points can vanish along the boundary of the disk. Indeed, this is not really possible unless one works with stacks. Instead, he accomplishes the same effect by remembering the points which lie on the boundary, but allowing them to collide freely. The resulting space therefore has a little copy of a Ran space \cite[Section~5.5.1]{luriehigheralgebra} tacked on along the boundary, so to speak, and the contractibility of the Ran space implies that this has the same effect up to homotopy as completely forgetting the points. This is carried out in \cite{bianchi2}. 

\cref{sec:3} corresponds to \cite[Section~6]{bianchi4}. The main points here are that stabilizing by gluing together branched covers corresponds to Harer stability map (our \cref{prop: good components} and the diagram on the top of p.~39 of \cite{bianchi4}), and that one can ``interchange limits'' when letting the number of branch points go to infinity and letting the degree of the cover go to infinity (our \cref{prop:Mg to delooping,prop:infinite d to finite d}, Bianchi's discussion of the ``main diagram'' and ``propagators''). 

\cref{sec:4} corresponds to \cite[Sections~2--4]{bianchi3}. Both delooping arguments are philosophically similar and follow the proof strategy of \cite{hatcher-madsenweiss}. 

\cref{sec:5} corresponds to \cite[Sections~5--6]{bianchi3}. Our deformation retraction of $\Bra^d_D$ onto $\Bra_D^{d,\loc}$ is the analogue of Bianchi's deformation retraction of $\mathrm{Hur}(\mathcal R,\partial;\mathcal Q,G)_{0;\boldsymbol{1}}$ onto $\mathbb B(\mathcal Q_+,G)$, and the skeletal filtration of $\Bra_D^{d,\loc}$ corresponds to the  norm filtration of $\mathbb B(\mathcal Q_+,G)$. The fact that we construct a decomposition of the moduli space into orbicells, corresponds in Bianchi's paper to the fact that each layer of the norm filtration $\mathfrak F_\nu \mathbb B(\mathcal Q_+,G)$ is fibered over a model of $BG$ with manifold fibers.

\bibliographystyle{alpha}
\bibliography{database}

\end{document}